\documentclass[11pt]{article}
\usepackage{amsthm, amsbsy, amsmath, amscd, mathrsfs}
\usepackage[margin=0.96in]{geometry}
\usepackage{mathtools}
\usepackage[toc, page]{appendix}
\usepackage{graphicx}
\usepackage{yfonts}
\usepackage{enumerate}
\usepackage{color,framed}  
\usepackage[colorlinks=true, pdfstartview=FitV, linkcolor=blue,citecolor=blue, urlcolor=blue]{hyperref}
\usepackage[color=yellow]{todonotes}
\usepackage{txfonts}

\let\oldbibliography\thebibliography
\renewcommand{\thebibliography}[1]{%
\oldbibliography{#1}%
\setlength{\itemsep}{0pt}%
}

\newtheorem{theorem}{Theorem}[section]
\newtheorem{lemma}[theorem]{Lemma}

\newtheorem{corollary}[theorem]{Corollary}
\newtheorem{remark}[theorem]{Remark}       
\theoremstyle{definition}
\newtheorem{definition}[theorem]{Definition}
\newtheorem{assumption}[theorem]{Assumption}
\newtheorem{example}[theorem]{Example} 

\numberwithin{equation}{section}

\DeclareMathAlphabet\mathbfcal{OMS}{cmsy}{b}{n}
\makeatletter
\@tfor\next:=abcdefghijklmnopqrstuvwxyzABCDEFGHIJKLMNOPQRSTUVWXYZ\do{%
\def\command@factory#1{%
\expandafter\def\csname b#1\endcsname{\mathbf{#1}}
\expandafter\def\csname bb#1\endcsname{\mathbb{#1}}
\expandafter\def\csname cl#1\endcsname{\mathcal{#1}}
\expandafter\def\csname bcl#1\endcsname{\mathbfcal{#1}}
}
\expandafter\command@factory\next
}

\newcommand{\scp}[2]{{\big\langle {#1}\, , \, {#2}\big\rangle}}

\newcommand{\sym}[1]{\boldsymbol{#1}}
\newcommand{\mbb}[1]{\mathbb{#1}}
\newcommand{\mfk}[1]{\mathfrak{#1}}

\newcommand{\rmd}{\textnormal{d}}

\title{\vspace{-8mm}\textbf{\Large 
Variational principles for fluid dynamics
on rough paths}}
\author{
Dan Crisan\thanks{\footnotesize Department of Mathematics, Imperial College, London SW7 2AZ, UK,, \href{mailto:d.crisan@imperial.ac.uk}{d.crisan@imperial.ac.uk}}
\and
Darryl D. Holm\thanks{\footnotesize Department of Mathematics, Imperial College, London SW7 2AZ, UK, \href{mailto:d.holm@imperial.ac.uk }{d.holm@imperial.ac.uk}}
\and
James-Michael Leahy\thanks{\footnotesize Department of Mathematics, Imperial College, London SW7 2AZ, UK, \href{mailto:j.leahy@imperial.ac.uk}{j.leahy@imperial.ac.uk}}
\and 
Torstein Nilssen\thanks{\footnotesize Institute of Mathematics, University of Agder, Kristiansand S, Norway, \href{mailto:torstein.nilssen@uia.no}{torstein.nilssen@uia.no}}
}

\date{}

\begin{document}

\maketitle
\vspace{-5mm}
\begin{abstract}
In recent works, beginning with \cite{holm2015variational}, several stochastic geophysical fluid dynamics (SGFD) models have been derived from variational principles. In this paper, we introduce a new framework for parametrization schemes (PS) in GFD. We derive a class of rough geophysical fluid dynamics (RGFD) models as critical points of rough action functionals using the theory of controlled rough paths. These RGFD models characterize Lagrangian trajectories in fluid dynamics as geometric rough paths (GRP) on the manifold of diffeomorphic maps. We formulate three constrained variational approaches for the derivation of these models. The first is the  Clebsch formulation, in which the constraints are imposed as rough advection laws. The second is the Hamilton-Pontryagin formulation, in which the constraints are imposed as right-invariant rough vector fields. And the third is the Euler--Poincar\'e formulation, in which the variations are constrained. These constrained rough variational principles lead directly to the Lie--Poisson Hamiltonian formulation of fluid dynamics on GRP. The GRP framework preserves the geometric structure of fluid dynamics obtained by using Lie group reduction to pass from Lagrangian to Eulerian variational principles, yielding a rough formulation of the Kelvin circulation theorem. The rough formulation enhances its stochastic counterpart developed in \cite{holm2015variational}, and extended to semimartingales in \cite{street2020driv}. For example, the rough-path variational approach includes non-Markovian perturbations of the Lagrangian fluid trajectories. In particular, memory effects can be introduced  through a judicious choice of the rough path (e.g. a realization of a fractional Brownian motion).
In the particular case when the rough path is a realization of a semimartingale, we recover the SGFD  models in \cite{holm2015variational,street2020driv}.  However, by eliminating the need for stochastic variational tools, we retain a \emph{pathwise} interpretation of the Lagrangian trajectories. In contrast, the Lagrangian trajectories in the stochastic framework are described by stochastic integrals, which do not have a pathwise interpretation. Thus, the rough path formulation restores this property.  
\end{abstract}

\renewcommand{\baselinestretch}{0.9}\normalsize
\setcounter{tocdepth}{3}
\tableofcontents
\renewcommand{\baselinestretch}{1.0}\normalsize

\section{Introduction}

The present work aims to transfer the fundamental properties of deterministic fluid dynamics derived by Hamilton's principle into their formulation on geometric rough paths.  Recent work  \cite{crisan2021solution} concerning solution properties of Euler fluid dynamics on rough paths demonstrates the efficacy of this approach to produce previously unavailable results, such as the Beale-Kato-Majda blowup criterion for ideal fluid solutions on geometric rough paths. 
 
To set the scenery we discuss some aspects of the Hamilton's principle variational approach to modelling fluid dynamics behaviour using its Lie group symmetry. Hamilton's principle states that critical points $\delta S=0$ of a time integral $S=\int_{0}^{T}L\,dt$ with Lagrangian functional $L:TM\to\mathbb{R}$ determine dynamical equations on a manifold $M$.  Since its inception, Hamilton's principle has provided a systematic mathematical framework for scientific investigation. For example, Lie symmetries of Hamilton's principle encode conservation laws (i.e., Noether's theorem \cite{noether1918invarianten, kosmann2011noether, holm2009geometric}) on whose level sets the ensuing dynamics takes place. Lie symmetries of Hamilton's principle also reduce the number of dynamical degrees of freedom to equivalence classes of observables that transform under the corresponding Lie group. 

The reduced Hamilton's principle leading to the Euler-Poincar\'e equations for ideal continuum mechanics was only  developed recently in \cite{holm1998euler}. For the flow of ideal fluids in a fixed domain $M\subset\mathbb{R}^n$, Lie group symmetry reduces the number of degrees of freedom to the equivalence classes of observables that transform under pull-back by smooth invertible maps with smooth inverses  ($\phi\in{\rm Diff}(M)$, diffeomorphisms) in which the composition of functions is understood as a Lie group operation. Euler fluid dynamics is then recast as a flow map $\phi_t$ which defines a \emph{time-dependent} geodesic curve on the manifold of diffeomorphisms, cf., \cite{arnold1966principe, holm1998euler}.

 The present approach is based on the premise that Euler's fluid equations arise from Hamilton's variational principle for geodesic flow on the manifold of diffeomorphisms with respect to the metric defined by the kinetic energy of the fluid,  \cite{arnold1966principe, ebin1970groups}. The variations are constrained by the condition of right-invariance of the velocity vector field. Hamilton's principle for fluids is modified when advection by the fluid motion under the action of the diffeomorphisms carries fluid properties such as mass and heat, whose contribution to the thermodynamic equation of state affects the motion \cite{holm1998euler}. These advected fluid quantities are said to follow \emph{Lagrangian trajectories} of fluid parcels in the flow. Since the Lagrangian trajectories for Euler's ideal fluid equations are \emph{pushed forward} by time-dependent diffeomorphic maps, these trajectories may be regarded as curves parametrized by time on the manifold of smooth invertible maps (diffeomorphisms) \cite{ebin1970groups}.
 
Preserving the fundamental structure derived from Hamilton's principle in the course of more general fluid modelling is paramount.  These theoretical considerations have helped in developing Hamilton's principle modelling for stochastic continuum mechanics in \cite{holm2015variational}. In turn, this new development has recently led to new methods for stochastic data assimilation using particle-filtering in geophysical fluid dynamics (GFD) \cite{cotter2019numerically}.

The need for robust and computationally efficient Parametrization Schemes (PS) that model the effects of fast sub-grid scale physics and other unresolved processes is well understood in Geophysical Fluid Dynamics (GFD). See, for example,  \cite{ghil2019physics}, for a recent overview. Stochastic Parameterization Schemes (SPS) have the additional ability to introduce model uncertainty \cite{berner2017stochastic} naturally.  SPS have improved the probabilistic skill of the ensemble weather forecasts by increasing their reliability and reducing the error of the ensemble mean. The coming years are likely to see a further increase in the use of SPS in ensemble methods in forecasts and assimilation. This, however, will put increasing demands on the methods used to represent computational model uncertainty in the dynamical core and other components of the Earth system while maintaining overall computational efficiency \cite{leutbecher2017stochastic}.

The preservation of geometrical structure and physicality of fluid dynamics can serve as a guiding principle in designing robust PS for GFD. The PS are meant to preserve predictive power, accuracy, and computational efficiency in modeling the effects of both: (i) unresolved phenomena due to the known but unresolved rapid sub-grid scale physics, as well as (ii) uncertainty due to unknown bias in the data. Thus, in ensemble computations, PS face a daunting combination of tasks. 

In this paper, we propose a structured approach for parametrization of the rapid scales of fluid motion by using a temporally rough vector field in the framework of geometric rough paths (GRP) \cite{friz2014course}, which we call Geometric Rough Path Parametrization Schemes (GRPPS). Namely, we will develop a new class of variational principles for fluids that model resolved and unresolved motions of fluid advection as GRPPS. Critical points of our rough-path constrained variational principles are rough partial differential equations (RPDEs), whose dynamics incorporate both the resolved-scale fluid velocity and the effects of the unresolved fluctuations.

In the particular case when the generating rough path is a straight line, or, more generally, a smooth curve, the GRPPS approach introduced here reduces to a PS approach obtained through classical/deterministic variational principles (see Section \ref{sec:var_smooth_paths}). Similarly, when the generating rough path is a  realization of a Brownian motion (or, more generally, of a semimartingale process),  GRPPS specializes to a pathwise formulation of the SPS characterized through the stochastic variational principles first introduced in \cite{holm2015variational}. In other words, this work enhances the mathematical framework of \emph{Stochastic Advection by Lie Transport} (SALT), in which the Lagrangian trajectories are treated as time-dependent Stratonovich stochastic processes \cite{holm2015variational}.

Non-Markovian models include models with memory which are of interest in ocean dynamics, see, e.g.,  \cite{WoodsHole, Eos, Nasa, hottestnat, woods1981memory, lebeaupin2013ocean, primeau2006ocean}.  The variational treatment of fluid dynamics on rough paths enables the introduction of such models. This can be accomplished, for example, by realising the rough path as a fractional Brownian motion, or as a more general Gaussian process with suitably chosen time correlation. There is growing evidence that non-Markovianity improves models of the effects of fast sub-grid scales on the resolved scales (see, e.g., \cite{arnold2013stochastic, gagne2020machine, chattopadhyay2020data, levine2021framework}). It stands to reason that such models could be useful in parametrising the sub-grid scales of real fluids. Our framework, in particular, includes spatially-local, non-Markovian SPS by using rough paths (rather, for example,  state-delay terms) to model the sub-grid scale terms. Since our framework retains the core geometric structure of fluid mechanics, one can expect the solution properties of our equations would track those of  the deterministic unperturbed equations  (e.g., stability up to blow-up time \cite{crisan2021solution}). This sort of fidelity would be important in the calibration of our models, especially for those calibrations that use the modern `solver-in-the loop' estimation procedures \cite{um2020solver}.

\paragraph{The contents of this paper.}
The overall goal of the present paper is to formulate rigorously in Theorems \ref{thm:Clebsch} and \ref{thm:HamPont} variational principles for ideal fluid dynamics with advection of fluid quantities along Geometric Rough Paths. To achieve this goal, our first aim is to derive a rough version of the Lie chain rule in Theorem \ref{thm:Lie_chain} leading to the GRP version of the classical Reynolds transport formula in Corollary \ref{corr:Reyn_trans}. The Reynolds transport formula for momentum density encapsulates the force law which governs fluid motion. Mathematically, the formula describes the rate of change of the integral of the fluid momentum density over a moving control volume that is being transported by the rough (fluid) flow along a GRP. Thus, our key result is the Lie chain rule formula \eqref{eq:pull-back_tensor} in Theorem \ref{thm:Lie_chain} for the rough differential (or increment in time) of the pull-back and push-forward of a tensor-field-valued GRP by a rough flow. The Lie chain rule formula \eqref{eq:pull-back_tensor} is intuitive and natural because it follows from the extension of ordinary calculus to GRP. The formula leads to a unified, stable, and flexible framework for modelling fluids whose Lagrangian parcels move along temporally rough paths. Theorem \ref{thm:Lie_chain} for the Lie chain rule is the foundation on which the other contributions of the paper rest.

In order to derive the momentum and advection equations satisfied by the critical points of our variational principles, we required a rough version of the  fundamental lemma of the calculus of variations. A version is formulated and proved in Section \ref{sec:FLCRC}. As far as we are aware, this is a new result.

Our work is an example of the rigorous content of the Malliavin transfer principle, which says that geometric constructions involving manifold-valued curves can be extended to manifold-valued rough paths by replacing classical calculus with geometric rough-path calculus (see, e.g. \cite{schwartz1984semimartingales,emery1990two, cass2011rough, boutaib2015new,driver2018global,armstrong2020non}). More specifically, we show that deterministic geometric continuum mechanics can be extended to rough-path geometric continuum mechanics.

\bigskip

\noindent
The paper is structured as follows:
\begin{itemize}
\item
Section \ref{sec:Clebsch_var_princ} formulates the first variational principle for fluid dynamics on geometric rough paths in Theorem \ref{thm:Clebsch}, by imposing the \emph{Clebsch constraint} for the advection fluid quantities along rough paths. 
\item
Section \ref{sec:LieChainRule} formulates the Lie Chain Rule Theorem \ref{thm:Lie_chain} and the Reynolds Transport Corollary \ref{corr:Reyn_trans} for geometric rough paths. The Reynolds transport formula in the special case of one-forms yields the rough Kelvin--Noether Theorem \ref{thm:Kelvin}.  Section \ref{sec:LieChainRule} also formulates the Hamilton-Pontryagin variational principle for rough paths in Theorem \ref{thm:HamPont}, which imposes the constraint that the vector fields which generate the Lagrangian trajectories are right-invariant under diffeomorphisms whose time dependence is rough. The Clebsch and Hamilton-Pontryagin variational principles for rough paths correspond to those derived in the SALT approach in \cite{holm2015variational} and \cite{gay2018stochastic}, respectively.
Next we formulate the Euler--Poincar\'e constrained variational principle for ideal fluid motion on GRP in Theorem \ref{EP-thm}. Here, we pose an open problem regarding the construction of variations used in this principle. Finally, we develop the Lie--Poisson Hamiltonian formulation of fluid dynamics on GRP in Corollary \ref{cor-LPHamForm}.
\item Section \ref{sec:examples} provides three  examples of fluid equations on GRP. These are: i) the rough Euler equation for incompressible fluid flow; ii) the rough Camassa-Holm equation and its limiting case, the rough Burgers equation; and iii) the equations for ideal compressible adiabatic fluid dynamics on GRP. 
\item Finally, Section \ref{sec:proofs} contains the proofs of the main results formulated in Section \ref{sec:LieChainRule}. 
\end{itemize}

In addition, the paper contains five Appendices which are meant to provide notation and 
background information, including proofs of key technical results invoked in the main text, a simple example of our procedure in the setting of smooth paths, and additional history and  motivation. The first two Appendices are essential and contain together the key relationships and definitions needed in both geometric mechanics and the theory of rough paths for the present work, which as far as we know are found together nowhere else. The latter three Appendices provide additional information and motivation for the theory of rough paths. 

Appendix \ref{App-notation_and_background} defines the notation we use and summarises  the essential background and results for both rough paths and geometric mechanics that we use in the text. We choose to put this section in the appendix rather than in the main text since different classes of readers may be familiar with at least some of our notation, and might wish to see the statements of the main results presented first. The main text will  refer to sections in Appendix \ref{App-notation_and_background} as needed if we think a notation is not standard. Appendix \ref{App-auxil-results} contains proofs of selected technical results which facilitate the proofs in the main text. Appendix \ref{sec:var_smooth_paths} illustrates the variational principles we use in the example of a homogeneous incompressible fluid flow perturbed by spatially and temporally smooth noise. This example serves as a guide for introducing rough perturbations into the variational principles for more general fluid theories.  Appendix \ref{sec:motivation} provides a short history and motivation in the development of the theory of rough paths and Appendix \ref{sec:Gaussian_rough_paths} discusses the concrete example of Gaussian rough paths and provides additional references to this important class of rough paths.

\medskip
 
\noindent
\textbf{Contributions of this paper.}

This paper offers a variational framework that connects Geometric Rough Path Theory with Geophysical Fluid Dynamics, hopefully to the benefit of both fields. The geometric variational approach followed here may enhance the development of mathematical and numerical models in a range of investigations in Weather Prediction, Data Assimilation, Ocean Dynamics, Atmospheric Science, perhaps even Turbulence. For example, the model development may benefit from theoretical results (stability results, large deviation principles, splitting schemes) for random dynamical systems arising from rough partial differential equations \cite{crisan2021solution}. In turn, the new connections between GFD and geometric rough paths may become a fruitful source of open problems in mathematics.

This paper introduces a GRPPS framework that transcends the scope of either deterministic or stochastic parametrization schemes by allowing GRP with H\"older index $\alpha\in \left(\frac{1}{3}, 1\right ]$.\footnote{The analysis presented here can be extended to $\alpha\in \left(0, 1\right ]$ at the expense of more elaborate computations } The case $\alpha=1$ recovers deterministic fluid dynamics  (see, e.g., Section \ref{sec:var_smooth_paths}).  The case $\alpha=1/2-\epsilon$ $(\epsilon\ll 1)$  gives a pathwise characterization of SPS. Widening the choice of  H\"older index provides a broader scope for modelling with PS. Indeed, one may also include models which are non-Markovian (for example, by choosing the rough path as a realization of a fractional Brownian motion, or of a more general Gaussian process with suitably chosen time dependence).

\bigskip

\noindent
The GRPPS presented here possess the following fundamental properties:

\begin{itemize}
\item Being derived from Hamilton's variational principle, they preserve the geometric structure of fluid dynamics \cite{holm1998euler}. 

\item They satisfy a Kelvin circulation theorem, which is the classical essence of fluid flow. 

\item They are consistent with the modern mathematical formulation of fluid flow as geodesic flows on the manifold of smooth invertible transformations, with respect to the metric associated with the fluid's kinetic energy \cite{arnold1966principe}. 

\item 
They accommodate Pontryagin's maximum principle for control in taking a dynamical system from one state to another, especially in the presence of constraints for the state or input controls \cite{bloch2000optimal}.

\end{itemize}

 \paragraph{Open problems}
Following this work, the following problems remain open:

\begin{itemize}

\item Completeness of the constrained velocity variations in formulating the RPDEs in the Euler-Poincar\'e Theorem  \ref{EP-thm}.  

\item Well-posedness of RPDEs derived in Sec.\ref{sec:Euler-Lag_mult} and Sec. \ref{sec:Euler-spaces}.

\item Estimation of the rough path properties and calibration of the GRPPS model from observed or simulated data.

\item The development of pathwise data assimilation methods for the incorporation of data into GRPPS.

\item Uncertainty quantification and forecast analysis using GRPPS.

\end{itemize}

\subsection*{Data availability}
 Data sharing not applicable to this article as no datasets were generated or analysed during the current study.
 
\subsection*{Acknowledgments} 
All of the authors are grateful to our friends and colleagues who have generously offered their time, thoughts and encouragement in the course of this work during the time of COVID-19. 
We are particularly grateful to T. D. Drivas and S. Takao for thoughtful discussions.
DC and DH are grateful for partial support from ERC Synergy Grant 856408 - STUOD (Stochastic Transport in Upper Ocean Dynamics). JML is grateful for partial support from US AFOSR Grant FA9550-19-1-7043 - FDGRP (Fluid Dynamics of Geometric Rough Paths) awarded to DH as PI. TN is grateful for partial support from the DFG via the Research Unit FOR 2402.

\section{The Clebsch variational principle  for geometric rough paths}\label{sec:Clebsch_var_princ}

To streamline the presentation of our results and to provide a ready reference for the reader, we have assembled the notation and background of geometric mechanics and rough paths theory needed for this paper into one place --  Appendix \ref{App-notation_and_background} -- rather than dispersing it sequentially in the main text.

Let $\bZ=(Z,\bbZ)\in \bclC^{\alpha}_T(\mbb{R}^K)$ be a given geometric rough path with H\"older index $\alpha\in (\frac{1}{3}, 1]$ defined in the time interval $t\in [0,T]$. Let $\mfk{X}=\mfk{X}_{\clF_1}$ denote a function space of vector fields. Let  $\mfk{X}^{\vee}=\mfk{X}^{\vee}_{\clF_2}$ denote a function space of one-form densities  such that   the canonical pairing $\langle \cdot, \cdot \rangle_{\mathfrak{X}}: \mfk{X}^{\vee}_{C^\infty}\times \mfk{X}_{C^{\infty}} \rightarrow \bbR$  defined in \eqref{def:canonical_pairing} extends to a continuous  pairing on $\mfk{X}^{\vee}\times\mfk{X}$. For incompressible fluids, we implicitly use the constructions of the divergence-free, or both divergence-free and harmonic-free vector fields and their canonical `duals' (see Definition  \ref{def:canonical_dual_incompressible}). We write all spaces in brief notation without including Riemannian measure $\mu_g$ or other extraneous adornment for a unified treatment of the compressible and incompressible case. That is to say, for incompressible fluid flows, all vector fields (and variations of vector fields) and one-form densities are constrained. Using  Cartan's formula \eqref{eq:Cartan} and the Stokes theorem, one can  show that for all $u\in \mfk{X}_{C^{\infty}}$, the adjoint (see \eqref{def:adjoint}) of the vector-field operation $$\operatorname{ad}_{u}=-\,\pounds_{u}: \mfk{X}_{C^{\infty}}\rightarrow \mfk{X}_{C^{\infty}}$$ relative to the canonical pairing $\langle \cdot, \cdot\rangle_{\mfk{X}}$ is given by $\operatorname{ad}_{u}^*=\pounds_{u}: \mfk{X}_{\clD'}^{\vee}\rightarrow \mfk{X}_{\clD'}^{\vee}$. Thus,  for all $\alpha\otimes D\in \mfk{X}_{C^{\infty}}^{\vee}$, we have
$$
\operatorname{ad}^*_u(\alpha \otimes D)=\pounds_{u}(\alpha\otimes D)=\pounds_{u}\alpha \otimes D + \alpha \otimes \pounds_{u}D=\pounds_{u}\alpha \otimes D + \alpha \otimes (\operatorname{div}_{D}u D).
$$

Let $A$ be a direct summand of alternating form bundles and tensor bundles such that the first component of $A$ is the density bundle $\Lambda^dT^*M$. Let $A^{\vee}$ denote the canonical dual in Section \ref{sec:duals_of_bundles}.  Define $\langle\cdot, \cdot\rangle_{\mfk{A}}: \mfk{A}_{C^{\infty}}^{\vee}\times \mfk{A}_{C^{\infty}}\rightarrow \bbR$ via a sum as explained in \eqref{def:canonical_pairing}. Let  $\mfk{A}=\mfk{A}_{\clF_3}=\Gamma_{\clF_3}(A)$ and $\mfk{A}^{\vee}=\mfk{A}^{\vee}_{\clF_4}=\Gamma_{\clF_4}(A^{\vee})$ be  function spaces  such that the pairing $\langle \cdot, \cdot \rangle_{\mathfrak{A}}$ 
extends to a continuous  pairing on $\mfk{A}^{\vee}\times\mfk{A}$. For all $u\in \mfk{X}_{C^{\infty}}$,  let $\pounds^*_u:\mfk{A}_{C^{\infty}}^{\vee}\rightarrow \mfk{A}_{C^{\infty}}^{\vee}$ denote the adjoint  (see \eqref{def:adjoint}) of the Lie derivative $\pounds_u:\mfk{A}_{C^{\infty}}^{\vee}\rightarrow \mfk{A}_{C^{\infty}}^{\vee}$ defined relative to the canonical pairing $\langle \cdot, \cdot \rangle_{\mathfrak{A}}$.

\begin{definition}[Diamond operator  $(\diamond)$]\label{diamond-def}
We define the bilinear \emph{diamond operator}  $\diamond : \mfk{A}^{\vee}_{C^{\infty}}\times \mfk{A}_{C^{\infty}}\rightarrow \mathfrak{X}^{\vee}_{C^{\infty}}$ via the relation
$$
\langle\lambda \diamond a,u\rangle_{\mfk{X}} = -\,\langle \lambda,\pounds_u a\rangle_{\mfk{A}}, \quad  \forall (\lambda, u,a)\in \mfk{A}^{\vee}_{C^{\infty}}\times \mfk{X}_{C^\infty}\times \mfk{A}_{C^{\infty}}.
$$
We refer the reader to \cite{holm1998euler} and Section \ref{sec:examples} (esp. Section \ref{sec.EulAdiabatCompFuid}) for explicit computations with the diamond operator in fluid dynamics.
We assume that $\diamond$ extends to a continuous operator $\diamond : \mfk{A}^{\vee}\times \mfk{A}\rightarrow \mathfrak{X}^{\vee}$.
\end{definition}

\begin{assumption}\label{asm:Lagrangian} Let $\ell: \mfk{X}\times \mfk{A}\rightarrow \bbR$. Assume there exist  (functional derivatives) $\frac{\sym{\delta} \ell}{\sym{\delta} u}:\mfk{X} \times \mfk{A}\rightarrow\mfk{X}^{\vee}$ and $\frac{\sym{\delta} \ell}{\sym{\delta} a}:\mfk{X}\times \mfk{A}\rightarrow \mfk{A}^{\vee}$ such that for all $(u,a)\in\mfk{X}\times \mfk{A}$ and $(\sym{\delta} u, \sym{\delta} a)\in \mfk{X}_{C^\infty}\times \mfk{A}_{C^{\infty}}$:
\begin{enumerate}[(i)]
\item  
$$
\frac{d}{d\epsilon}\bigg|_{\epsilon=0} \ell(u+\epsilon\sym{\delta} u,a+
\epsilon \sym{\delta} a) =: \left\langle \frac{\sym{\delta} \ell }{\sym{\delta} u}(u,a), \sym{\delta} u  \right\rangle_{\mathfrak{X}}+ \left\langle \frac{\sym{\delta} \ell }{\sym{\delta} a}(u,a), \sym{\delta} a  \right\rangle_{\mfk{A}};
$$
\item  for any sequence $\{(u^n,a^n)\}_{n\in \bbN}\subset \mfk{X}_{C^\infty}\times \mfk{A}_{C^{\infty}}$  such that  $(u^n,a^n)\rightarrow (u,a)$ as $n\rightarrow \infty$ in $\mfk{X}\times \mfk{A}$;
$$
\lim_{n\rightarrow \infty}\left\langle \frac{\sym{\delta} \ell }{\sym{\delta} u}(u^n,a^n), \sym{\delta} u  \right\rangle_{\mathfrak{X}}=\left\langle \frac{\sym{\delta} \ell }{\sym{\delta} u}(u,a),\sym{\delta} u\right\rangle_{\mathfrak{X}} \;   \hbox{ and } \;
\lim_{n\rightarrow \infty}\left\langle \frac{\sym{\delta} \ell }{\sym{\delta} a}(u^n,a^n), \sym{\delta} a  \right\rangle_{\mfk{A}}=\left\langle \frac{\sym{\delta} \ell }{\sym{\delta} a}(u,a),\sym{\delta} a\right\rangle_{\mfk{A}}.
$$
\item the mapping $\frac{\sym{\delta} \ell}{\sym{\delta} u}(\cdot, a) : \mfk{X}\rightarrow \mfk{X}^{\vee}$ is an isomorphism.
\end{enumerate}
\end{assumption}

Let $\xi \in \mfk{X}_{\clD'}^K$ denote a fixed collection of  vector fields.\footnote{It is worth noting that one can make $\xi$ time dependent and  depend on other quantities as in \cite{gay2018stochastic}.} 

\begin{definition} Let $\mathit{Clb}_{\bZ}$ denote the space of  all
$$
\left(u,\ba,\boldsymbol{\lambda}\right) \in C_T^{\alpha}(\mfk{X}) \times  \bclD_{Z,T}( \mfk{A}) \times \bclD_{Z,T}( \mfk{A}^{\vee})
$$
such that
\begin{enumerate}[(i)]
\item  for all $\phi \in \mfk{X}_{C^\infty}$, we have $\pounds_{u}a, \pounds_{u}\phi, \pounds_{\phi}a\in C_T^{\alpha}(\mfk{A})$, and  there exists $(\pounds_{\xi}a)'$ such that  $(\pounds_{\xi}a,(\pounds_{\xi}a)')\in \bclD_{Z,T}(\mfk{A})$;
\item  for all $\phi\in \mfk{X}_{C^\infty}$, we have $\pounds_{u}^*\lambda, \pounds_{u}^*\phi, \pounds_{\phi}^*\lambda \in  C_T^{\alpha}(\mfk{A}^{\vee})$, and there exists $(\pounds_{\xi}^*\lambda)'$ such that  $(\pounds_{\xi}^*\lambda,(\pounds_{\xi}^*\lambda)')\in  \bclD_{Z,T}( \mfk{A}^{\vee})$.
\end{enumerate} 
\end{definition}
\begin{remark}
Let $\tilde{\mfk{X}}$,$ \tilde{\mfk{A}}$, and $\tilde{\mfk{A}}^{\vee}$ be function spaces (see Section \ref{sec:duals_of_bundles})  such that 
$
\tilde{\mfk{X}}\hookrightarrow \mfk{X},\; \tilde{\mfk{A}} \hookrightarrow \mfk{A},$ and $ \tilde{\mfk{A}}^{\vee}\hookrightarrow \mfk{A}^{\vee},
$
and
$
\pounds\in \clL(\tilde{\mfk{X}}\times \tilde{\mfk{A}},  \mfk{A})
$ and $\pounds^*\in \clL(\tilde{\mfk{X}}\times \tilde{\mfk{A}}^{\vee},  \mfk{A}^{\vee}).
$
If
$(u,\ba,\boldsymbol{\lambda})\in C_T(\tilde{\mfk{X}}) \times \bar{ \bclD}_{\bZ,T}( \tilde{\mfk{A}}) \times \bclD_{Z,T}( \tilde{\mfk{A}^{\vee}})$ and  $\xi \in \tilde{\mfk{X}}^K$, then (i) and (ii) hold.
\end{remark}

\paragraph{Clebsch variational principle.}
The Clebsch action functional $S^{\mathit{Clb}_{\bZ}} :\mathit{Clb}_{\bZ}\rightarrow \bbR$ is defined by%
\begin{equation}\label{def:Clebsch-action}
S^{\mathit{Clb}_{\bZ}}(u, \ba, \boldsymbol{\lambda})= \int_0^T \ell(u_t, a_t)\rmd t + \langle \lambda_t,   \rmd \ba_t+ \pounds_{\rmd x_t}a_t\rangle_{\mfk{A}},
\end{equation}
where 
$$
\langle \lambda_t,   \pounds_{\rmd x_t}a_t\rangle_{\mfk{A}}:=  \langle \lambda_t, \pounds_{u_t}a_t\rangle_{\mfk{A}}\rmd t +\langle \lambda_t, \pounds_{\xi} a_t  \rangle_{\mfk{A}}\rmd \bZ_t, \quad \rmd x_t:=u_t \rmd t + \xi \rmd \bZ_t.
$$

A \emph{variation} of  $(u,\ba,\boldsymbol{\lambda} )\in \mathit{Clb}_{\bZ}$ is a  curve $\{(u^{\epsilon},\ba^{\epsilon},\boldsymbol{\lambda}^{\epsilon})\}_{ \epsilon \in (-1,1)}\subset \mathit{Clb}_{\bZ}$
of the form 
$$
(u^{\epsilon},\ba^{\epsilon},\boldsymbol{\lambda}^{\epsilon})=(u + \epsilon \sym{\delta} u,\ba + \epsilon \sym{\delta} a, \boldsymbol{\lambda} + \epsilon \sym{\delta} \lambda),
$$
for arbitrarily chosen $(\sym{\delta} u, \sym{\delta} a,\sym{\delta} \lambda) \in C^{\infty}_T(\mfk{X}_{C^\infty}\times  \mfk{A}_{C^{\infty}}\times \mfk{A}^{\vee}_{C^{\infty}})$ such that $\sym{\delta} a$  vanishes at $t=0$ and $t=T$. We say $(u,\ba,\boldsymbol{\lambda} )\in \mathit{Clb}_{\bZ}$ is a critical point of the action functional $S^{\mathit{Clb}_{\bZ}}$, if for all  variations one has
$$
\frac{d}{d\epsilon}\bigg|_{\epsilon = 0}S^{\mathit{Clb}_{\bZ}}(u^{\epsilon}, \ba^{\epsilon}, \boldsymbol{\lambda}^{\epsilon})=0.
$$

By virtue of the controlled rough path calculus and, in particular, Lemmas \ref{lem:product_and_chain} and \ref{lem:fund_calc_var}, we obtain the following Clebsch variational principle.%
\footnote{For more details about the history and applications of the Clebsch variational principle in fluid dynamics, see Appendix  \ref{sec:var_smooth_paths}.}

\begin{theorem}[Clebsch variational principle on geometric rough paths]\label{thm:Clebsch} A curve $(u,\ba,\boldsymbol{\lambda} )\in \mathit{Clb}_{\bZ}$  is a critical point of  $S^{\mathit{Clb}_{\bZ}}$ in \eqref{def:Clebsch-action} iff  for all $t\in [0,T]$, the following equations hold.
\begin{equation}\label{eq:EPeq}
\begin{aligned}
m_t &+  \int_0^t\pounds_{u_s} m_s \rmd s +   \int_0^t\pounds_{\xi} m_s\rmd \bZ_s \overset{\mfk{X}^{\vee}}{=} m_0+\int_0^t\frac{\sym{\delta} \ell}{\sym{\delta} a}(u_s,a_s) \diamond a_s \rmd s, \quad m  = \frac{\sym{\delta} \ell}{\sym{\delta} u}(u,a)=\lambda \diamond a,\\
a_t &+ \int_0^t\pounds_{u_s}a_s\rmd s + \int_0^t\pounds_{\xi} a_s \rmd \bZ_s  \overset{\mfk{A}}{=} a_0, \\
\lambda_t&\overset{\mfk{A}^{\vee}}{=} \lambda_0+\int_0^t\left(\pounds_{u_t}^*\lambda_s +\frac{ \sym{\delta} \ell}{\sym{\delta} a} (u_s, a_s) \right)\rmd s + \int_0^t\pounds_{\xi}^* \lambda_s \rmd \bZ_s.
\end{aligned}
\end{equation}
\end{theorem}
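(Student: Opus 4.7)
The proof is a rough-path adaptation of the classical derivation of the Clebsch principle in ideal fluid dynamics. The plan is to compute $\frac{d}{d\epsilon}\big|_{\epsilon=0} S^{\mathit{Clb}_{\bZ}}(u^\epsilon, \ba^\epsilon, \boldsymbol{\lambda}^\epsilon)$ by splitting into the three independent variations $\sym{\delta}\lambda$, $\sym{\delta} u$, $\sym{\delta} a$, and in each case appeal to the fundamental lemma of the calculus of variations (Lemma \ref{lem:fund_calc_var}) to extract a pointwise identity. The converse direction is then obtained by reading the same chain backwards.

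The variations in $\sym{\delta} \lambda$ and $\sym{\delta} u$ are essentially algebraic. The $\sym{\delta}\lambda$ variation gives
$$\int_0^T \langle \sym{\delta}\lambda_t,\,\rmd\ba_t + \pounds_{\rmd x_t}a_t\rangle_{\mfk{A}} = 0,$$
which by Lemma \ref{lem:fund_calc_var} forces the rough advection equation for $a$ stated in the second line of \eqref{eq:EPeq}. The $\sym{\delta} u$ variation only touches $\ell$ and the drift part of $\langle \lambda, \pounds_{\rmd x}a\rangle_{\mfk{A}}$, since the $\xi$-contribution is independent of $u$. Rewriting $\langle \lambda_t, \pounds_{\sym{\delta} u_t}a_t\rangle_{\mfk{A}} = -\langle \lambda_t \diamond a_t, \sym{\delta} u_t\rangle_{\mfk{X}}$ via Definition \ref{diamond-def} and invoking Lemma \ref{lem:fund_calc_var} together with Assumption \ref{asm:Lagrangian} produces the algebraic identity $m_t := \frac{\sym{\delta}\ell}{\sym{\delta} u}(u_t,a_t) = \lambda_t\diamond a_t$ appearing in the first line of \eqref{eq:EPeq}.

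The main step is the $\sym{\delta} a$ variation. Using that $\sym{\delta} a$ is time-smooth while $(\lambda,(\pounds_\xi^*\lambda)')$ is $\bZ$-controlled, the rough product rule from Lemma \ref{lem:product_and_chain} yields the integration-by-parts identity
$$\int_0^T \langle \lambda_t, \rmd \sym{\delta} a_t\rangle_{\mfk{A}} \;=\; \langle \lambda_t, \sym{\delta} a_t\rangle_{\mfk{A}}\Big|_0^T \,-\, \int_0^T \langle \rmd \lambda_t, \sym{\delta} a_t\rangle_{\mfk{A}},$$
with vanishing boundary terms. Pushing the Lie derivatives through the pairing via the adjoint relations $\langle \lambda, \pounds_v \sym{\delta} a\rangle_{\mfk{A}} = \langle \pounds_v^*\lambda, \sym{\delta} a\rangle_{\mfk{A}}$ for $v\in\{u,\xi\}$, and then separately matching the bounded-variation and $\bZ$-driven integrands by Lemma \ref{lem:fund_calc_var}, yields the rough evolution equation for $\lambda$, i.e.\ the third line of \eqref{eq:EPeq}.

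Finally, the first equation of \eqref{eq:EPeq} for $m = \lambda\diamond a$ follows by applying the rough Leibniz rule to the bilinear map $\diamond$ (a second application of Lemma \ref{lem:product_and_chain}), substituting the just-derived evolution equations for $\lambda$ and $a$, and simplifying using the identity
$$\pounds_v^*\lambda\diamond a \,-\, \lambda\diamond \pounds_v a \;=\; -\,\pounds_v (\lambda\diamond a),\qquad v\in\{u,\xi\},$$
which follows by testing against a smooth vector field and using the commutator identity $[\pounds_u,\pounds_v] = \pounds_{[u,v]}$ together with Definition \ref{diamond-def}. The principal technical hurdle throughout is to verify that every rough integral written above is a bona-fide controlled-rough-path integral; this is precisely what the Gubinelli-derivative requirements in the definition of $\mathit{Clb}_{\bZ}$ are designed to guarantee, so that the hypotheses of Lemma \ref{lem:product_and_chain} are met at every step.
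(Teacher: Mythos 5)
Your proposal is correct and follows essentially the same route as the paper's own proof: split into the three variations, apply the fundamental lemma of the calculus of variations to extract the algebraic constraint $m=\lambda\diamond a$ and the rough evolution equations for $a$ and $\lambda$, then recover the momentum equation from the rough Leibniz rule applied to $\lambda\diamond a$. The identity $\pounds_v^*\lambda\diamond a - \lambda\diamond\pounds_v a = -\pounds_v(\lambda\diamond a)$ that you isolate is precisely the content of the paper's chain of equalities (tested against a smooth $\phi$, using Definition~\ref{diamond-def}, the commutator relation $[\pounds_v,\pounds_\phi]=\pounds_{[v,\phi]}$, and $\operatorname{ad}_v^*=\pounds_v$ on $\mfk{X}^\vee$), so the two proofs agree step for step.
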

\begin{proof}
See Section \ref{proof:Clebsch}.
\end{proof}
\begin{remark}
By Remark \ref{rem:int_of_control_control}, the integral $\int_0^T  \langle \lambda _t,  \rmd \ba_t\rangle_{\mfk{A}}$ in the Clebsch action functional in \eqref{def:Clebsch-action} is well-defined. Indeed, the extra structure provided by the Gubinelli derivative in the  controlled rough path space (see Appendix subsection \ref{Gubinelli-deriv}) allows one to construct this integration. 
\end{remark}
\begin{remark}\label{rem:Clebsch_contraint}
The Lagrange multiplier $\lambda$ enforces the constraint that `$a$' satisfies
$$
a_t + \int_0^t\pounds_{u_s}a_s\rmd s + \int_0^t\pounds_{\xi} a_s \rmd \bZ_s  \overset{\mfk{A}}{=} a_0, \; \; \forall t\in [0,T].
$$
That is, the quantity `$a$' is (formally)  advected by the integral curves of the vector field $\rmd x_t=u\rmd t+\xi \rmd \bZ_t$. The Lie chain rule (Theorem \ref{thm:Lie_chain}) and Hamilton-Pontryagin variational principle in Section \ref{sec:HP_var_princ} explain the nature of this differential notation (see Remarks \ref{rem:RPDE_Inverse} and \ref{rem:Ham_advection}) which we will use freely. It follows that  $\ba = (a, -\pounds_{\xi}a) \in\bclD_{Z,T}( \mfk{A})$ and $(\pounds_{\xi}a, (\pounds_{\xi}a)')\in\bclD_{Z,T}( \mfk{A})$, where  $(\pounds_{\xi}a)'=-(\pounds_{\xi_k}\pounds_{\xi_l}a)_{1\le k,l\le K}$.  For more information about  rough partial differential equations (RPDEs) and their solutions, we refer the reader to \cite{friz2014course, bailleul2017unbounded, deya2019priori, hofmanova2019navier, hofmanova2019rough}. We mention that to prove well-posedness and, in particular, to show that `$a$' is controlled, one must obtain \emph{a priori} estimates of a remainder term which  contains third-order Lie derivatives, see equation \eqref{eq:Lie_derivative_local}.
\end{remark}

\begin{remark}
Incorporating additional constraints  into the  action functional is straightforward. For example, it is possible to enforce incompressibility via  Lagrange multipliers instead of through constraints on spaces as discussed at the beginning of this section and in Section \ref{sec:Hodge}. Naturally, additional terms appear on the right-hand-side of the equation for momentum, $m$, corresponding to the pressure terms (rough and smooth in time).  We will explain  in the examples in Section \ref{sec:examples} how one can impose the incompressibility constraint, either by using Lagrangian multipliers, or by  constraining the space of vector fields and its dual.

The most commonly solved Euler equation for incompressible homogeneous flow of an ideal fluid with transport-type noise is, in addition, harmonic-free \cite{crisan2019solution, brzezniak2016existence, brzezniak2019existence, crisan2019well}. Indeed, in most papers, the authors prove well-posedness of a transport vorticity equation on the torus $\bbT^d$, $d=2,3$ with $u$ recovered via the Biot-Savart law. By the Hodge decomposition theorem (see Section \ref{sec:Hodge}), if the underlying equation for the fluid velocity $u$ does not preserve mean-freeness (i.e., harmonic-freeness), then $u$ cannot be recovered directly from the vorticity equation by the Biot-Savart law. As a result of the perturbative nature of our theory, our equations do not, in general, preserve harmonic-freeness at the level of velocity. By imposing constraints on spaces (i.e., projections), we can easily impose that $u$ is both divergence and harmonic-free and derive the corresponding momentum equation with enough `free-variables' to impose the divergence-free and harmonic-free constraints. In particular, we shall explain how the  pressure and constant harmonic terms naturally decompose into a smooth and rough part, and how they can be recovered from $u$ as was done in, for example, \cite{mikulevicius2004stochastic, mikulevicius2005global} and  \cite{hofmanova2019navier, hofmanova2019rough}).
\end{remark}

\section{The Lie chain rule for geometric rough paths and its applications}\label{sec:LieChainRule}

For an incompressible ideal fluid evolving on a compact oriented Riemannian manifold $(M,g)$ with associated volume-form $\mu_g$, the Lagrangian flow map $\eta:[0,T]\rightarrow \operatorname{Diff}_{\mu_g}$ may be regarded as a curve in the group $G:=\operatorname{Diff}_{\mu_g}$ of volume-preserving diffeomorphisms on $M$ endowed with some appropriate topology, initiated from the identity  $\eta_{0} ={\rm id}$ and parametrized by time, $t\in [0,T]$. In his seminal paper \cite{arnold1966principe}, V. I. Arnold showed that the configuration space for incompressible hydrodynamics is the space of volume preserving diffeomorphisms and that Euler's equation for the Eulerian velocity field $u$ (i.e., $\dot{\eta}_t=u_t\circ \eta_t$) is equivalent to the path $\eta$ being a critical point of the kinetic energy action functional. Otherwise said, Euler's equation can be recast as the geodesic equation on the diffeomorphism group endowed with the right-invariant weak $L^2$-metric. 

However, various geometric-analytic challenges arise if one wishes to make this viewpoint constructive and solve the geodesic equation as an ODE (and show there is no derivative loss). The crux of the matter is that composition from the right is not smooth if one wants to endow $G$ with a Banach topology and work with a standard functional analytic tool-set \cite{ebin1970groups}. The variational principles developed in this paper can be seen as extensions of the geodesic principle in \cite{arnold1966principe} or, more generally, the overarching EPDiff theory \cite{holm1998euler}. 

\subsection{Lie chain rule and  Reynolds transport theorem}
 Theorem \ref{thm:rough_diffeo}  can be extended via a coordinate chart  or approximate flow argument (see, e.g.,  \cite{weidner2018geometric, driver2018global, bailleul2019roughmanifold}) to obtain the following theorem concerning smooth rough flows on the closed manifold $M$. We assume smoothness in the  spatial variable and compactness of our manifolds for simplicity. More relaxed conditions can be found in, for example, in \cite{weidner2018geometric}.

\begin{theorem}[Rough flow properties]\label{thm:rough_diffeo_main} There exists a unique continuous map 
$$
\operatorname{Flow} : C^{\alpha}_T(\mfk{X}_{C^\infty})\times C^{\infty}_T(\mfk{X}_{C^\infty}^K)\times  \bclC_{g,T}(\bbR^K)\rightarrow C^{\alpha}_{2,T}(\operatorname{Diff}_{C^{\infty}})
$$
such that $\eta_{ts}=\operatorname{Flow}(u,\xi,\bZ)_{ts}$, $(s,t)\in [0,T]^2$, satisfies the following properties:
\begin{enumerate}[(i)]
\item for all $(s,\theta,t)\in [0,T]^3$, $\eta_{tt}=\operatorname{Id}$ and  $$\eta_{ t\theta }\circ \eta_{\theta s}=\eta_{ts};$$
\item for all $(s,t)\in \Delta$ and $f\in C^\infty$,
\begin{equation}\label{eq:pullback_flow}
\eta_{ts}^*f=f+\int_0^t \eta_{rs}^*u_r[f]\rmd r+\int_0^t \eta_{rs}^*\xi_r[f]\rmd \bZ_r,
\end{equation}
and
\begin{equation}\label{eq:push-forward_flow}
\eta_{ts*}f=f-\int_0^t u_r[\eta_{rs*}f]\rmd r-\int_0^t\xi_r[ \eta_{rs*}f]\rmd \bZ_r.
\end{equation}
\end{enumerate}
\end{theorem}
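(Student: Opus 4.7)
My plan is to transport the Euclidean rough flow result (Theorem \ref{thm:rough_diffeo}) to the closed manifold $M$ by a globalisation argument and then extract the two integral identities by applying the Lie chain rule to smooth test functions. Concretely, I would use Whitney's embedding to realise $(M,g)$ as a compact, smoothly embedded submanifold of some $\mathbb{R}^N$, and then use a tubular neighbourhood to extend each $\xi_k$ and each $u_r$ to compactly supported smooth vector fields $\tilde\xi_k,\tilde u_r$ on $\mathbb{R}^N$ that are tangent to $M$ at every point of $M$. Because the extensions are tangent to $M$, the Euclidean rough flow $\tilde\eta_{ts}$ produced by Theorem \ref{thm:rough_diffeo} preserves $M$; defining $\eta_{ts}:=\tilde\eta_{ts}|_M$ yields a candidate element of $C^{\alpha}_{2,T}(\operatorname{Diff}_{C^\infty})$, with continuity in $(u,\xi,\mathbf{Z})$ inherited from the Euclidean flow map together with the continuity of the extension procedure. (An equivalent approach is to cover $M$ by a finite atlas, solve the RDE locally using Theorem \ref{thm:rough_diffeo} in each coordinate patch, and patch together by uniqueness; this avoids embedding but is technically heavier.)

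Having existence and uniqueness in hand, the group property (i) is routine: for fixed $\theta\in[0,T]$, both $s\mapsto \eta_{t\theta}\!\circ\!\eta_{\theta s}(x)$ and $s\mapsto \eta_{ts}(x)$ solve the same rough characteristic equation in $\mathbb{R}^N$ with the same boundary condition at $s=\theta$, so uniqueness in Theorem \ref{thm:rough_diffeo} forces them to agree. To establish the pull-back identity \eqref{eq:pullback_flow}, I would fix $f\in C^\infty(M)$, extend it to $\tilde f\in C^\infty_c(\mathbb{R}^N)$, and apply the rough Itô/Lie chain rule for smooth vector fields in the Euclidean setting to the composition $r\mapsto \tilde f\circ\tilde\eta_{rs}$. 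Because $\tilde u_r$ and $\tilde\xi_k$ agree with $u_r,\xi_k$ on $M$ and $\tilde\eta_{rs}$ maps $M$ to $M$, the resulting identity restricts cleanly to $M$ and yields \eqref{eq:pullback_flow}. This is also the correct sanity check that the integrands $\eta_{rs}^*u_r[f]$ and $\eta_{rs}^*\xi[f]$ are controlled rough paths with Gubinelli derivatives expressible through second-order Lie derivatives, so that the rough integrals on the right-hand side are defined.

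For the push-forward identity \eqref{eq:push-forward_flow}, I would use $\eta_{ts\ast}=(\eta_{ts}^{-1})^*=\eta_{st}^*$ from (i) and differentiate the consistency relation $\eta_{ts}^*\eta_{ts\ast}f=f$ with respect to $t$ at fixed $s$: combining this with \eqref{eq:pullback_flow} applied to $\eta_{ts\ast}f$ and rearranging produces the claimed integral equation. Continuity of the map $\operatorname{Flow}$ then follows from the continuity of the Euclidean flow map in Theorem \ref{thm:rough_diffeo} together with the continuity of the tangent extension $(u,\xi)\mapsto(\tilde u,\tilde\xi)$. The main technical obstacle, in my view, is the passage from the Euclidean to the manifold setting while retaining the full $C^\infty$ regularity in space and $\alpha$-H\"older regularity in time uniformly on $M$; in particular one needs the rough remainder arising from third-order Lie derivatives (cf.\ the discussion after Remark \ref{rem:Clebsch_contraint}) to be controlled in a way compatible with both the chart transitions (or, equivalently, with the tubular-neighbourhood extensions) and with taking the inverse flow required for the push-forward equation. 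These are precisely the estimates carried out in \cite{weidner2018geometric, driver2018global, bailleul2019roughmanifold}, which I would invoke as needed.
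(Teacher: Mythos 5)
Your plan is in the same spirit as the paper's (globalize the Euclidean flow result of Theorem \ref{thm:rough_diffeo} and then read off the integral identities via the rough chain rule), but the globalization mechanism differs. The paper's "proof" is essentially the remark preceding the theorem statement --- it invokes a coordinate-chart or approximate-flow argument and defers to \cite{weidner2018geometric, driver2018global, bailleul2019roughmanifold}; the Euclidean input (Theorem \ref{thm:rough_diffeo}, including its inverse-flow RPDE in item (iv)) and the local Davie expansion used in Step 1 of Section \ref{sec:proof_Lie_chain} do the rest. Your route via Whitney embedding and a tubular-neighborhood tangent extension is a legitimate alternative: it avoids patching local flows across chart transitions, at the cost of having to verify that the extension $(u,\xi)\mapsto(\tilde u,\tilde\xi)$ is continuous into the appropriate Fr\'echet space and that the restricted flow lands in $C^{\alpha}_{2,T}(\operatorname{Diff}_{C^\infty}(M))$. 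Both strategies buy the same theorem; yours is arguably conceptually tidier, the paper's is closer to the cited references and to the chart-based Davie estimates it reuses elsewhere.

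There is, however, a real gap in the way you propose to obtain the push-forward identity. You say you would apply \eqref{eq:pullback_flow} to $\eta_{ts*}f$ and then ``differentiate the consistency relation $\eta_{ts}^*\eta_{ts*}f=f$ with respect to $t$.'' But \eqref{eq:pullback_flow} is stated for $t$-independent $f$, whereas $g_t:=\eta_{ts*}f$ depends on $t$; applying it to $g_t$ requires the \emph{time-dependent} pull-back formula, which is exactly the scalar Lie chain rule (Theorem \ref{thm:Lie_chain}). That result is proved later, and its proof in turn presupposes the present theorem, so as written this step is circular. Two ways out, both consistent with the rest of your plan: (a) prove the inverse-flow RPDE directly in the Euclidean setting (this is Theorem \ref{thm:rough_diffeo}(iv), obtained for instance via unbounded rough driver / energy estimates) and then transport it to $M$ exactly as you transport the forward equation; or (b) first establish the time-dependent pull-back formula for scalars by the same local Davie expansion you use for the time-independent case --- this is precisely what Step 1 of Section \ref{sec:proof_Lie_chain} does, and the push-forward version is then ``shown in a similar way.'' Either fix makes your argument complete; as stated, the derivation of \eqref{eq:push-forward_flow} from \eqref{eq:pullback_flow} and the group property glosses over the need for the time-dependent version.
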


\begin{remark}
Let us recall that $\eta^*_{ts}$ and $\eta_{ts*}$ denote the pullback and push-forward, respectively (see \eqref{def:push-forward_pullback}).
Item (ii) in \eqref{eq:pullback_flow} means that for all $X\in M$, the quantity $\eta_{\cdot s}X$ is  the  unique solution of the RDE
\begin{equation}\label{eq:roughVF}
\rmd \eta_{ts}X=u_t(\eta_{ts}X)\rmd t + \xi_t(\eta_{ts}X)\rmd \bZ_t, \; \; t\in (s,T], \;\; \eta_{ss}X=X.
\end{equation}
for all $s\in [0,T]$.
\end{remark}

We refer to the following theorem as the \emph{Lie chain rule}. A stochastic version (i.e., Brownian case) of this theorem was proved in \cite{de2020implications}[Theorem 3.1].

\begin{theorem}[Rough Lie chain rule]\label{thm:Lie_chain}
For given $\tau_0 \in \clT^{lk}_{C^{\infty}}$,  $\pi \in C_T(\clT^{lk}_{C^{\infty}})$, and $\boldsymbol{\gamma}=(\gamma,\gamma') \in \bclD_{Z,T}((\clT^{lk}_{C^{\infty}})^K)$, let
$$
\tau _t = \tau_0 + \int_0^t \pi_r \rmd r + \int_0^t \gamma_r\rmd \bZ_r, \;\;t\in [0,T].
$$
Then for all  $(s,t)\in \Delta_T$, 
\begin{equation}\label{eq:pull-back_tensor}
\eta^{*}_{ts} \tau_t= \tau_s + \int_s^t \eta^{*}_{rs} \left( \pi_r + \pounds_{u_r} \tau_r\right)\rmd r  +  \int_s^t \eta^{*}_{rs} \left( \gamma_r + \pounds_{\xi_r} \tau_r\right)\rmd \bZ_r,
\end{equation}
and
\begin{equation}\label{eq:push_forward_tensor}
\eta_{ts*}\tau_t= \tau_s + \int_s^t  \left( \eta_{rs*}\pi_r - \pounds_{u_r} (\eta_{rs*}\tau_r)\right)\rmd r  +  \int_s^t \left(  \eta_{rs*} \gamma_r - \pounds_{\xi_r}( \eta_{rs*}  \tau_r)\right)\rmd \bZ_r,
\end{equation}
where the time-dependent vector fields $u$ and $\xi$ are given in equation \eqref{eq:roughVF}.
\end{theorem}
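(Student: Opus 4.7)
The plan is to reduce the statement to the already-established pullback identity for scalars from Theorem \ref{thm:rough_diffeo_main} by a two-step argument: first extend that identity from functions to arbitrary time-independent smooth tensor fields, and then bring in the time-dependent piece $\pi_r\rmd r + \gamma_r \rmd\bZ_r$ using the controlled rough path product rule.

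For the first step, I would show that for any $\sigma \in \clT^{lk}_{C^{\infty}}$,
\begin{equation*}
\eta^*_{ts}\sigma = \sigma + \int_s^t \eta^*_{rs}\pounds_{u_r}\sigma\,\rmd r + \int_s^t \eta^*_{rs}\pounds_{\xi_r}\sigma\,\rmd\bZ_r,
\end{equation*}
and that $(\eta^*_{\cdot s}\sigma,\,\eta^*_{\cdot s}\pounds_{\xi}\sigma) \in \bclD_{Z,T}(\clT^{lk}_{C^\infty})$. For $\sigma$ a scalar, this is precisely \eqref{eq:pullback_flow}. For vector fields and one-forms the identity is obtained from the scalar case by using the naturality relations $\eta^*_{ts}(v[f])=(\eta^*_{ts}v)[\eta^*_{ts}f]$ and $\eta^*_{ts}\langle\omega,v\rangle=\langle\eta^*_{ts}\omega,\eta^*_{ts}v\rangle$ together with the Leibniz property of the rough integral. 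For higher-rank tensors one bootstraps via the multilinearity of the pullback and the tensor-algebra derivation property of the Lie derivative, checking at each stage that the Gubinelli derivative of $\eta^*_{rs}\sigma$ is $\eta^*_{rs}\pounds_{\xi_r}\sigma$. An alternative (and arguably cleaner) route is to approximate $\bZ$ by smooth paths $Z^{\varepsilon}$, use the classical smooth Lie chain rule $\frac{d}{dt}\eta^{\varepsilon,*}_{ts}\sigma=\eta^{\varepsilon,*}_{ts}(\pounds_{u_t}\sigma + \pounds_{\xi_t}\sigma\,\dot{Z}^{\varepsilon}_t)$, and pass to the limit using continuity of the rough flow together with continuity of the rough integral on controlled paths.

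For the second step, I would view $r\mapsto \eta^*_{rs}\tau_r$ as the (pointwise) product of two controlled rough paths: the operator-valued path $r\mapsto \eta^*_{rs}$, whose Gubinelli derivative acts as $\eta^*_{rs}\pounds_{\xi_r}(\cdot)$ by the first step, and the tensor path $r\mapsto\tau_r$, whose Gubinelli derivative is $\gamma_r$. Invoking the controlled rough path product/chain rule (Lemma \ref{lem:product_and_chain}), $\eta^*_{rs}\tau_r$ is itself controlled with Gubinelli derivative
$$
(\eta^*_{rs}\tau_r)' = \eta^*_{rs}\pounds_{\xi_r}\tau_r + \eta^*_{rs}\gamma_r = \eta^*_{rs}\bigl(\gamma_r + \pounds_{\xi_r}\tau_r\bigr),
$$
which is exactly the integrand of the rough integral on the right-hand side of \eqref{eq:pull-back_tensor}. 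Expanding the Leibniz form of the product rule and substituting $\rmd\tau_r = \pi_r\rmd r + \gamma_r\rmd\bZ_r$ together with the step-one identity $\rmd(\eta^*_{rs})\sigma = \eta^*_{rs}\pounds_{u_r}\sigma\,\rmd r + \eta^*_{rs}\pounds_{\xi_r}\sigma\,\rmd\bZ_r$ and collecting the $\rmd r$- and $\rmd\bZ_r$-terms yields \eqref{eq:pull-back_tensor}. The pushforward formula \eqref{eq:push_forward_tensor} then follows either by applying the same argument starting from \eqref{eq:push-forward_flow} in place of \eqref{eq:pullback_flow}, or by inverting $\eta^*_{ts}$ using $\eta_{ts*}\circ\eta^*_{ts}=\operatorname{Id}$ and differentiating this relation once more with the product rule.

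The main obstacle I anticipate is the bookkeeping of rough-path regularities: verifying that the operator-valued path $r\mapsto\eta^*_{rs}$ lies in a Gubinelli-controlled space with values in a function space compatible with $\pi$ and $\gamma$, and showing that the remainders produced by the product rule---which formally involve iterated Lie derivatives of the form $\pounds_{\xi_k}\pounds_{\xi_l}\pounds_{\xi_m}\tau$ and cross terms between $\gamma'$ and $\eta^*\pounds_\xi$---are of order $o(|t-s|)$ uniformly, so that they drop out of the rough Riemann sums. The smoothness of $\xi$ and the compactness of $M$, combined with the quantitative estimates underlying Theorem \ref{thm:rough_diffeo_main}, should be exactly what is needed; the more singular regularity setting would require the heavier machinery of \cite{bailleul2017unbounded, deya2019priori, hofmanova2019rough}.
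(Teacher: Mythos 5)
Your plan shares the paper's underlying philosophy -- bootstrap from scalars to vectors to forms to higher-rank tensors via naturality of pullback and the rough product rule -- but the paper organizes the induction differently, and the difference matters: the paper's Step~1 establishes the \emph{time-dependent} scalar pullback formula (by a local-chart Davie expansion together with the sewing lemma), and only then climbs the tensor ranks, feeding the time-dependent scalar formula into each subsequent step. Your Step~1, by contrast, tries to establish the \emph{time-independent} formula for all tensor ranks before any time-dependence enters, and there you run into a circularity. The naturality identity you invoke, $\eta^*_{ts}(v[f]) = (\eta^*_{ts}v)[\eta^*_{ts}f]$, or its rearranged form $(\eta^*_{ts}v)[f] = \eta^*_{ts}\big(v[\eta_{ts*}f]\big)$, has on the right-hand side the pullback of $v[\eta_{ts*}f]$, which is a \emph{time-dependent} controlled scalar path even though $v$ and $f$ are fixed. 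Handling this requires exactly the time-dependent scalar pullback formula that, in your scheme, is supposed to emerge only in Step~2. The other reading -- take $\eta^*_{ts}(v[f])$ and $\eta^*_{ts}f$ as known controlled scalar paths and ``solve'' $A[B] = C$ for the unknown controlled vector path $A = \eta^*_{ts}v$ -- is an implicit/division argument in the controlled algebra, and it needs a separate uniqueness ingredient (of the true-roughness type, cf.\ Theorem~\ref{thm:truly_rough}) that you have not supplied. Your alternative route via smooth approximations $Z^\varepsilon$ does close Step~1, but then you would need to check that the remainder terms converge in the controlled topology, for which the a priori Davie estimates behind Theorem~\ref{thm:rough_diffeo_main}(iii) are the relevant input; this is essentially the content of the paper's Step~1, repackaged.

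Step~2 is the genuinely different ingredient: you fold in the time-dependence by viewing $r\mapsto\eta^*_{rs}$ as an operator-valued controlled rough path and applying the product rule Lemma~\ref{lem:product_and_chain} to the evaluation pairing. This is cleaner conceptually, but it requires committing to a topology on the operator space $\clL(\clT^{lk}_{C^\infty},\clT^{lk}_{C^\infty})$ for which (a) the bilinear evaluation $(L,\tau)\mapsto L\tau$ is jointly continuous and (b) the Gubinelli decomposition $\delta(\eta^*)_{st} = \eta^*_{ss}\pounds_{\xi}\,\delta Z_{st} + R_{st}$ holds with $R\in C^{2\alpha}_{2,T}$ in that operator topology. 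Neither of these is automatic for operators on a Fr\'echet space, and the paper never needs to make this choice: it always tests against a fixed slot (a function $f$, a vector $V$, a form $\alpha$) so that the product rule is only ever applied to a finite tensor contraction $B:F\times E\to G$ of \emph{tensor}-valued, not operator-valued, controlled paths. So the approach is salvageable -- and, with the approximation route and a careful choice of operator topology, would give a correct and arguably more modular proof -- but as written Step~1 has a circular dependence on Step~2, and Step~2 has an unaddressed topological prerequisite that the paper's pointwise multilinearity argument sidesteps.
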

\begin{proof}
	See Section \ref{sec:proof_Lie_chain}.
\end{proof}
\begin{remark}\label{rem:RPDE_Inverse}
By \eqref{eq:push_forward_tensor}, for an arbitrary  $\tau_0\in \clT^{r,s}_{C^{\infty}}$, it follows that  $\tau_{\cdot}=\eta_{\cdot 0\,*}\tau_0$ is a classical solution  of
$$
\tau_t+ \int_0^t \pounds_{u_r}\tau_r \rmd r + \int_0^t \pounds_{\xi_r} \tau_r \textnormal{d}\bZ_r=\tau_0.
$$
Notice that if  we introduce the notation
$\rmd x_t:=\rmd \eta_{t0} \circ \eta_{t0}^{-1}:=u_t\rmd t + \xi_t \rmd \bZ_t,$
then we may write
$$
\tau_t+ \int_0^t \pounds_{\rmd x_r}\tau_r =\tau_0,
$$ 
which  generalizes the dynamic definition of the  Lie-derivative to the rough case.
\end{remark}

The following  corollary is an extension of the Reynolds transport theorem. It is an immediate  application of the definition of the integral on manifolds (see, e.g., Sec.\ 8.1 and 8.2 of \cite{abraham2012manifolds}), the global change of variables formula,  the Lie chain rule (Theorem \ref{thm:Lie_chain}) and the rough Fubini theorem (Lemma \ref{lem:Fubini}. We will use this formula next in the case $k=1$ for the proof of the Kelvin circulation theorem (see Section \ref{sec:Kelvin_circ_thm}).

\begin{corollary}[Rough Reynolds transport theorem]\label{corr:Reyn_trans} For given $\alpha_0 \in \Omega^k_{C^{\infty}}$,  $\pi \in C_T(\Omega^k_{C^{\infty}})$, and $\boldsymbol{\gamma}=(\gamma,\gamma') \in \bclD_{Z,T}((\Omega^k_{C^{\infty}})^K)$, let
$$
\alpha _t = \alpha_0 + \int_0^t \pi_r \rmd r + \int_0^t \gamma_r\rmd \bZ_r, \;\;t\in [0,T].
$$ 
Then for all $k$-dimensional smooth submanifolds $\Gamma$ embedded in $M$ and $(s,t)\in \Delta_T$, we have
\begin{equation*}
\int_{\eta_{ts}(\Gamma)}\alpha_t=\int_{\Gamma}\alpha_s + \int_s^t\int_{\eta_{rs}(\Gamma)}  \left( \pi_r + \pounds_{u_r} \tau_r\right)\rmd r  + \int_s^t\int_{ \eta_{rs}(\Gamma)}  \left( \gamma_r + \pounds_{\xi_r} \tau_r\right)\rmd \bZ_r,
\end{equation*}
where $\eta_{ts}(\Gamma)$ denotes the image of  \,$\Gamma$ under the action of the flow $\eta$.
\end{corollary}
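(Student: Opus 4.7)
\medskip

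\noindent\textbf{Proof proposal.} The plan is to reduce the statement to the Lie chain rule (Theorem \ref{thm:Lie_chain}) applied to the $k$-form $\alpha$, by pulling the moving domain $\eta_{ts}(\Gamma)$ back to the fixed domain $\Gamma$ via the global change-of-variables formula, applying \eqref{eq:pull-back_tensor} pointwise on forms, integrating over $\Gamma$, and then invoking a rough Fubini-type result to swap the submanifold integral with the $\rmd r$ and $\rmd \bZ_r$ integrals. Concretely, the first step is to write
\begin{equation*}
\int_{\eta_{ts}(\Gamma)}\alpha_t \;=\; \int_{\Gamma} \eta_{ts}^{*}\alpha_t,
\end{equation*}
using the standard change-of-variables formula for $k$-forms on oriented submanifolds (see, e.g., Sec.\ 8.1--8.2 of \cite{abraham2012manifolds}); the submanifold $\Gamma$ is smooth and $\eta_{ts}\in\operatorname{Diff}_{C^\infty}$ by Theorem \ref{thm:rough_diffeo_main}, so the pullback is well defined and orientation preserving.

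Next, since $\alpha$ satisfies the hypotheses of Theorem \ref{thm:Lie_chain} (with the tensor $\tau$ specialized to the alternating $k$-tensor-field $\alpha$), I would apply \eqref{eq:pull-back_tensor} to obtain
\begin{equation*}
\eta_{ts}^{*}\alpha_t \;=\; \alpha_s + \int_s^t \eta_{rs}^{*}\!\left(\pi_r + \pounds_{u_r}\alpha_r\right)\rmd r + \int_s^t \eta_{rs}^{*}\!\left(\gamma_r + \pounds_{\xi_r}\alpha_r\right)\rmd \bZ_r,
\end{equation*}
as an identity in $\Omega^k_{C^\infty}$ (recall that the Lie derivative preserves the space of $k$-forms and that pullback commutes with exterior algebra, so everything remains in $\Omega^k_{C^\infty}$). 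The integrability of the controlled rough-path integrand is inherited from Theorem \ref{thm:Lie_chain}, whose Gubinelli derivative is precisely $\eta_{rs}^{*}(\gamma_r+\pounds_{\xi_r}\alpha_r)'$, computed from $\gamma'$ and the first-order Taylor expansion of $\eta$.

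The third step is to integrate the preceding identity over $\Gamma$ and commute $\int_\Gamma$ with the time integrals. For the Bochner-type integral $\int_s^t(\,\cdot\,)\rmd r$, this is the classical Fubini theorem applied to the continuous map $(r,X)\mapsto \eta_{rs}^{*}(\pi_r+\pounds_{u_r}\alpha_r)(X)$. For the rough integral $\int_s^t(\,\cdot\,)\rmd\bZ_r$, the same swap is the content of the rough Fubini theorem (Lemma \ref{lem:Fubini}); this is where the controlled-path structure becomes essential, since one must verify that the map sending $X\in\Gamma$ to the controlled path $(\eta_{\cdot\, s}^{*}(\gamma_\cdot+\pounds_{\xi_\cdot}\alpha_\cdot)(X),\,\eta_{\cdot\, s}^{*}(\gamma_\cdot+\pounds_{\xi_\cdot}\alpha_\cdot)'(X))$ is continuous in a sense strong enough to make the parameter integral commute with the rough integral. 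After the swap, a second application of the change-of-variables formula turns $\int_\Gamma \eta_{rs}^{*}(\,\cdot\,)$ into $\int_{\eta_{rs}(\Gamma)}(\,\cdot\,)$, yielding the claimed identity.

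The main obstacle is the rigorous justification of the rough Fubini step: one must check that the family of controlled paths parametrized by $X\in\Gamma$ (or, after local charting, by points of a compact parameter domain) satisfies the hypotheses of Lemma \ref{lem:Fubini} uniformly, and that the Gubinelli derivatives depend continuously on the parameter. Everything else is a routine assembly once the Lie chain rule and the change-of-variables formula are in hand.
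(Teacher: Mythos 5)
Your proposal is correct and follows exactly the same route the paper describes: pulling back to the fixed domain $\Gamma$ via the global change-of-variables formula for $k$-forms, applying the Lie chain rule \eqref{eq:pull-back_tensor} pointwise, integrating over $\Gamma$ and swapping with the rough integral via Lemma \ref{lem:Fubini}, and then changing variables back. You also correctly flag that the rough Fubini step (uniform controlled-path structure in the parameter $X\in\Gamma$) is the only place requiring care, which is consistent with the paper treating the result as an immediate consequence of these three ingredients.
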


\subsection{Kelvin's circulation theorem}\label{sec:Kelvin_circ_thm}
Assume  that for all $t\in [0,T]$,
\begin{align*}
m_t&\overset{\mfk{X}_{C^{\infty}}^{\vee}}{=}m_0+\int_0^t\left(\frac{\sym{\delta} \ell}{\sym{\delta} a}(u_s,a_s)\diamond a_s-\pounds_{u_s}m_s\right)\rmd s-\int_0^t\pounds_{\xi}m_s \rmd \bZ_s, \quad m  = \frac{\sym{\delta} \ell}{\sym{\delta} u}(u,a),\\
D_t&+\int_0^t \pounds_{u_s}D_s\rmd r+\int_0^t\pounds_{\xi}D_s \rmd \bZ_s\overset{\operatorname{Dens}_{C^{\infty}}}{=}D_0,
\end{align*}
where all the paths and integrands are assumed to be smooth. By virtue of  Theorem \ref{thm:rough_diffeo_main}, there exists a  flow of diffeomorphisms   $\eta=\eta_{\,\cdot\, 0} \in C_T^{\alpha}(\operatorname{Diff}_{C^{\infty}})$  such that 
$$
\rmd \eta_tX = u_t( \eta_tX)\rmd  t + \xi(\eta_tX)\rmd \bZ_t,\;\; t\in (0,T], \quad \eta_0X=X\in M.
$$

We obtain the following rough version of the Kelvin-Noether theorem in \cite{holm1998euler} as an application of the Reynolds transport theorem in Corollary \ref{corr:Reyn_trans}, 

\begin{theorem}[Rough Kelvin-Noether Theorem] \label{thm:Kelvin}
Let $\gamma$ denote a compact embedded one-dimensional smooth submanifold of $M$ and denote $\gamma_t=\eta_t(\gamma)$ for all $t\in [0,T]$. If $D_0$ is non-vanishing, then
$$
\oint_{\gamma_t}\frac{1}{D_t}\frac{\sym{\delta} \ell}{\sym{\delta} u}(u_t,a_t)=\oint_{\gamma_0}\frac{1}{D_0}\frac{\sym{\delta} \ell}{\sym{\delta} u}(u_0,a_0)+ \int_0^t\oint_{\gamma_s}\frac{1}{D_s}\frac{\sym{\delta} \ell}{\sym{\delta} a}(u_s,a_s)\diamond a_s \rmd s.
$$
\end{theorem}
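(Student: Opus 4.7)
The plan is to introduce the one-form $\alpha_t := m_t/D_t$ (which is well-defined since $D_0$ is non-vanishing and $D_t$, being advected as a density by a diffeomorphism flow, remains non-vanishing), derive a rough evolution equation for $\alpha_t$, and then apply the Rough Reynolds Transport Theorem (Corollary \ref{corr:Reyn_trans}) with $k=1$.

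First I would observe the pointwise Leibniz identity for the Lie derivative applied to a one-form density: since $m = \alpha\otimes D$, the derivation property gives
\[
\pounds_u m = (\pounds_u \alpha)\otimes D + \alpha\otimes \pounds_u D,
\]
and similarly with $\xi$ in place of $u$. Equivalently, $\pounds_u\alpha = \tfrac{1}{D}\left(\pounds_u m - \alpha\,\pounds_u D\right)$ and likewise for $\pounds_\xi$. Next, since $(\mathbf{Z},\mathbb{Z})$ is a geometric rough path, the rough product/chain rule (Lemma \ref{lem:product_and_chain}) applied to the identity $m_t = \alpha_t\cdot D_t$ yields, in the standard Leibniz form,
\[
\rmd \alpha_t = \frac{1}{D_t}\bigl(\rmd m_t - \alpha_t\,\rmd D_t\bigr).
\]
Substituting the given equations for $m$ and $D$ and applying the two Leibniz identities above, the drift and rough coefficients collapse to
\[
\rmd \alpha_t = \left(\frac{1}{D_t}\,\frac{\sym\delta \ell}{\sym\delta a}(u_t,a_t)\diamond a_t \;-\; \pounds_{u_t}\alpha_t\right)\rmd t \;-\; \pounds_\xi \alpha_t\,\rmd \bZ_t.
\]

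Having put $\alpha_t = m_t/D_t$ into the form covered by Corollary \ref{corr:Reyn_trans}, with
\[
\pi_t = \frac{1}{D_t}\,\frac{\sym\delta \ell}{\sym\delta a}(u_t,a_t)\diamond a_t - \pounds_{u_t}\alpha_t, \qquad \gamma_t = -\pounds_\xi\alpha_t,
\]
I would apply that Corollary to the closed loop $\gamma_t = \eta_t(\gamma_0)$ (the flow is generated by the same vector field $\rmd x_t = u_t\,\rmd t + \xi\,\rmd \bZ_t$ that transports $\alpha$ on the right-hand side). The key cancellations are the Lie-derivative transport terms:
\[
\pi_s + \pounds_{u_s}\alpha_s = \frac{1}{D_s}\,\frac{\sym\delta\ell}{\sym\delta a}(u_s,a_s)\diamond a_s, \qquad \gamma_s + \pounds_\xi\alpha_s = 0.
\]
Thus the rough integral in the Reynolds formula vanishes identically and the Riemann integral yields exactly the stated formula.

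The main obstacle I anticipate is the rigorous justification of step two: the product rule for $m = \alpha\cdot D$ in the controlled-rough-path sense, together with the chain-rule step of dividing by $D_t$. One must verify that all the relevant objects lie in appropriate controlled-rough-path spaces so that Lemma \ref{lem:product_and_chain} applies, in particular checking that $1/D_t$ and the Gubinelli derivative of $\alpha_t$ are well-defined and regular enough for the integrals $\int_s^t\int_{\gamma_r}\cdots\,\rmd\bZ_r$ in the Reynolds corollary to exist. Because the theorem hypothesis explicitly assumes smoothness of all paths and integrands in the spatial variable and $D_0$ non-vanishing (hence $D_t$ non-vanishing along the smooth flow), these regularity checks reduce to tracking Gubinelli derivatives through products and reciprocals, a routine application of the controlled-rough-path calculus developed in Appendix \ref{App-notation_and_background}.
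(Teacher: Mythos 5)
Your proposal is correct and follows essentially the same strategy as the paper's proof: set $\alpha_t = m_t/D_t$, derive for $\alpha_t$ a rough advection equation whose sole source term is $\frac{1}{D_t}\frac{\sym{\delta}\ell}{\sym{\delta}a}(u_t,a_t)\diamond a_t\,\rmd t$, and then invoke Corollary~\ref{corr:Reyn_trans} with $k=1$. The ``reciprocal'' step you flag as the main obstacle is precisely what the paper spells out: it introduces a fixed non-vanishing reference density $\mu$, writes $D_t = \rho_t\mu$ with $\rho_t\in C^\infty$, applies the rough chain rule of Lemma~\ref{lem:product_and_chain} to the scalar inversion $\rho\mapsto 1/\rho$ (together with $\pounds_w\tfrac{1}{\rho}=-\tfrac{1}{\rho^2}\pounds_w\rho$), and then reconstitutes $\tfrac{m_t}{D_t}=\tfrac{1}{\rho_t}\tfrac{m_t}{\mu}$ via the rough product rule, which makes rigorous the quotient-rule manipulation you wrote informally.
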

\begin{remark}
Formula \eqref{eq:one_over_density} explains that  $\frac{1}{\mu} : \mfk{X}^{\vee}_{C^{\infty}}\rightarrow \Omega^1_{C^{\infty}}$  is defined by $m= \alpha\otimes \nu \mapsto \frac{m}{\mu}=\alpha\frac{d\nu}{d\mu}.$ 
\end{remark}
\begin{proof}
See Section \ref{proof:Kelvin}.
\end{proof}

\subsection{The Hamilton-Pontryagin variational principle for geometric rough paths}\label{sec:HP_var_princ}

In this section, in addition to the assumptions in Section \ref{sec:Clebsch_var_princ}, we require that $\mfk{A}=\mfk{A}_{C^\infty}$, $\mfk{X}=\mfk{X}_{C^\infty}$, $\xi\in \{\mfk{X}_{C^\infty}\}^K$,  and $\bZ\in \bclC_{g,T}^{\alpha}(\bbR^K)$, $\alpha\in \left(\frac{1}{3},\frac{1}{2}\right]$, is \emph{truly rough} as in Definition \ref{def:truly_rough}. We define the space of rough diffeomorphisms by $$\operatorname{Diff}_{\bZ,T,C^\infty}=\operatorname{Flow}(C^{\alpha}_T(\mfk{X}_{C^{\infty}}), C^{\infty}_T(\mfk{X}_{C^\infty}^K),\bZ)_{\,\cdot\, 0}.$$ 

For given $\eta=\operatorname{Flow}(v, \sigma,\bZ)_{\cdot 0}\in \operatorname{Diff}_{\bZ,T,C^\infty}$ and $\lambda \in \bclD_{Z,T}( \mfk{X}^{\vee})$, we let 
\begin{equation}\label{def:g_ginv_int}
\int_0^T\langle \lambda_t,\rmd \eta_t\circ \eta^{-1}_t\rangle_{\mfk{X}}:=  \int_0^T \langle \lambda_t,v_t\rangle_{\mfk{X}}\rmd t + \int_0^T \langle \lambda_t,\sigma_t\rangle_{\mfk{X}}\rmd \bZ_t.
\end{equation}

\begin{definition} Let $\mathit{HP}_{\bZ}$ denote the space of  
$$
(u,\eta,\boldsymbol{\lambda}) \in C_T^{\alpha}(\mfk{X}_{C^\infty}) \times \operatorname{Diff}_{\bZ,T,C^\infty} \times \bclD_{Z,T}( \mfk{X}^{\vee})
$$
such that for all $\phi\in \mfk{X}_{C^\infty}$, 
$\pounds_{u}\lambda, \pounds_{u}\phi, \pounds_{\phi}\lambda \in  C_T^{\alpha}(\mfk{A}^{\vee})$ , and there exists $(\pounds_{\xi}\lambda)'$ such that  $(\pounds_{\xi}\lambda,(\pounds_{\xi}\lambda)')\in  \bclD_{Z,T}( \mfk{X}^{\vee})$.
\end{definition}

For a given  $a_0\in \mfk{A}_{C^{\infty}}$,   the Hamilton-Pontryagin action integral $S_{a_0}^{\mathit{HP}_{\bZ}} :\mathit{HP}_{\bZ}\rightarrow \bbR$  is defined by
\begin{equation}\label{HP-action-def}
S^{\mathit{HP}_{\bZ}}_{a_0}(u, \eta, \boldsymbol{\lambda})= \int_0^T \ell(u_t, \eta_{t*}a_0)\rmd t+ \langle \lambda_t,\rmd \eta_t\circ \eta^{-1}_t-u_t \rmd t - \xi \rmd \bZ_t\rangle_{\mfk{X}}.
\end{equation}
\begin{remark}\label{rem:Ham_advection}
By Theorem \ref{thm:truly_rough}, the Lagrange multiplier $\lambda$ in \eqref{HP-action-def} enforces 
$$
d \eta_tX = u_t(\eta_tX)\rmd t +\xi(\eta_tX)\rmd \bZ_t, \;\; t\in (0,T],\quad \eta_0X=X\in M.
$$
The \emph{true roughness} of the path $\bZ$ defined in Definition \ref{def:truly_rough} and satisfying Theorem \ref{thm:truly_rough} is required to ensure that \eqref{def:g_ginv_int} is well-specified  and to conclude that $v\equiv u $ and $\sigma \equiv \xi$ in the proof of Theorem \ref{thm:HamPont} (i.e., after taking variations). In contrast, we did not impose true roughness (see  Remark \ref{rem:Clebsch_contraint}) of the path for the Clebsch variational principle in Theorem \ref{thm:Clebsch} owing to the nature of the constraint and Lemma \ref{lem:fund_calc_var}.

By the Lie chain rule (Theorem \ref{thm:Lie_chain}), we find that $a_t=\eta_{t*}a_0$ satisfies
$$
a_t+\int_0^t \pounds_{\rmd x_s}a_s=a_0, \quad \textnormal{ where } \rmd x_t= u_t \rmd t + \xi \rmd \bZ_t,
$$
where the notation for $\rmd x_t$ is explained in Remark \ref{rem:RPDE_Inverse}.
That is, the quantity  $a$ is advected by the  flow $\eta \in \operatorname{Diff}_{\bZ,T,C^\infty}$.
This advection equation is used directly as the constraint in the Clebsch variational principal in Theorem \ref{thm:Clebsch}.
\end{remark}

\begin{definition}
A variation of  $(u,\eta,\boldsymbol{\lambda} )\in \mathit{HP}_{\bZ}$ is a  curve $\{(u^{\epsilon},\eta^{\epsilon},\boldsymbol{\lambda}^{\epsilon})\}_{ \epsilon \in (-1,1)}\subset  \mathit{HP}_{\bZ}$ of the form 
$$
(u^{\epsilon},\eta^{\epsilon},\boldsymbol{\lambda}^{\epsilon})=(u + \epsilon \sym{\delta} u,\psi^{\epsilon}\circ \eta,  \boldsymbol{\lambda} + \epsilon \sym{\delta} \lambda),
$$
where $\psi \in C^{\infty}([-1,1]\times [0,T];  \operatorname{Diff}_{C^{\infty}})$  is defined to be the flow (in the $t$-variable) given by
$$
\partial_t\psi^{\epsilon}_tX=\epsilon\partial_t\sym{\delta}w_t(\psi^{\epsilon}_tX), \quad \psi^{\epsilon}_0X=X\in M,
$$
for arbitrarily chosen $(\sym{\delta} u, \sym{\delta} w, \sym{\delta} \lambda) \in C^{\infty}_T(\mfk{X}_{C^\infty}\times  \mfk{A}_{C^{\infty}}\times \mfk{A}^{\vee}_{C^{\infty}})$ such that $\sym{\delta} w$ vanishes at $t=0$ and $t=T$.
\end{definition}
\begin{remark}[Variation $\eta^{\epsilon}$]
The type of variation we use for the rough diffeomorphism is common in the geometric mechanics community (see, e.g., Lemma 3.1 of \cite{arnaudon2014stochastic}). 
Notice that for all $t\in [0,T]$ and $f\in C^{\infty}$, 
$$
\psi^{\epsilon*}_t f  =f +\epsilon \int_0^t\psi^{\epsilon*}_r[\partial_t\sym{\delta}w_r f]\rmd r.
$$
Applying Theorem \ref{thm:Lie_chain} and using the natural property of the Lie derivative leads to
$$
\eta_t^{\epsilon*}f=f+\int_0^t\eta_r^{\epsilon*}\left(\pounds_{v_r^{\epsilon}}f+ \epsilon\pounds_{\partial_t\sym{\delta}w_r}f\right)\rmd r+ \int_0^t \eta_r^{\epsilon*}\pounds_{\sigma^{\epsilon}_r}f\rmd \bZ_r,
$$
where $v^{\epsilon}_t=\psi^{\epsilon}_{t*}v$ and $\sigma^{\epsilon}_t= \psi^{\epsilon}_{t*}\sigma$. Thus, for a given $\eta=\operatorname{Flow}(v, \sigma,\bZ)_{\cdot 0}$, it follows that 
$$
d \eta_t^{\epsilon}X = \left(v_t^{\epsilon}(\eta_t^{\epsilon}X)+\epsilon\partial_t\sym{\delta}w_r(\eta_t^{\epsilon}X)\right)\rmd t + \sigma^{\epsilon}_t(\eta_t^{\epsilon}X)\rmd \bZ_t, \quad \eta_0^{\epsilon}X=X\in M,
$$
and hence $$\eta^{\epsilon}=\operatorname{Flow}\left(v^{\epsilon}+\epsilon\partial_t\sym{\delta}w, \sigma^{\epsilon},\bZ\right)_{\cdot 0}\in \operatorname{Diff}_{\bZ,T,C^\infty}.$$
\end{remark}

The  proof of the following theorem is given in Section \ref{proof:Ham_Pont}.

\begin{theorem}[Hamilton-Pontryagin variational principle]\label{thm:HamPont} A curve $(u,\eta,\boldsymbol{\lambda} )\in \mathit{HP}_{\bZ}$  is a critical point of  $S^{\mathit{HP}_{\bZ}}$ if and only if for all $[0,T]$, 
\begin{equation*}
\begin{aligned}
m_t &+  \int_0^t\pounds_{u_s} m_s \rmd s +   \int_0^t\pounds_{\xi} m_s\rmd \bZ_s \overset{\mfk{X}^{\vee}}{=} m_0+\int_0^t\frac{\sym{\delta} \ell}{\sym{\delta} a}(u_s,a_s) \diamond a_s \rmd s, \quad m  = \frac{\sym{\delta} \ell}{\sym{\delta} u}(u,a)=\lambda,\\
a_t &+ \int_0^t\pounds_{u_s}a_s\rmd s + \int_0^t\pounds_{\xi} a_s \rmd \bZ_s  \overset{\mfk{A}_{C^\infty}}{=} a_0, \quad a_t=\eta_{t*}a_0,\\
\rmd \eta_tX&=u_t(\eta_tX)\rmd t + \xi(\eta_tX)\rmd \bZ_t,\;\;t\in (0,T], \;\; \eta_0X=X\in M.
\end{aligned}
\end{equation*}
\end{theorem}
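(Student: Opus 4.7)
The plan is to compute the first variation $\frac{d}{d\epsilon}\big|_{\epsilon=0} S^{\mathit{HP}_{\bZ}}_{a_0}(u^\epsilon,\eta^\epsilon,\boldsymbol{\lambda}^\epsilon)$, set it to zero, and extract the three stated equations by exercising the three independent test fields $\sym{\delta}\lambda$, $\sym{\delta} u$, and $\sym{\delta} w$ one at a time. Lemma \ref{lem:fund_calc_var} then converts each integrated identity into its pointwise (rough) counterpart, and the converse implication follows by reading the same calculation backwards.

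Varying $\sym{\delta}\lambda$ alone, and writing $\eta = \operatorname{Flow}(v,\sigma,\bZ)_{\cdot 0}$, the definition \eqref{def:g_ginv_int} reduces the first variation to $\int_0^T\langle \sym{\delta}\lambda_t,(v_t-u_t)\rmd t + (\sigma_t-\xi)\rmd\bZ_t\rangle_{\mfk{X}}=0$. Here the standing true roughness of $\bZ$ enters crucially: via Theorem \ref{thm:truly_rough} it disentangles the $\rmd t$- and $\rmd\bZ_t$-parts, forcing $v\equiv u$ and $\sigma\equiv \xi$. This is the flow equation, and the Lie chain rule (Theorem \ref{thm:Lie_chain}) applied to the constant tensor $a_0$ then identifies $a_t := \eta_{t*}a_0$ as a solution of the advection RPDE. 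Next, varying $\sym{\delta} u$ affects only $\int_0^T \ell(u_t,a_t)\rmd t$ and the explicit $-\int_0^T\langle \lambda_t,u_t\rangle_{\mfk{X}}\rmd t$ in the constraint, producing $\lambda_t = m_t := \tfrac{\sym{\delta}\ell}{\sym{\delta} u}(u_t,a_t)$ by Assumption \ref{asm:Lagrangian}.

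The substantive step is the variation in $\sym{\delta} w$. Using the identity $\eta^\epsilon=\operatorname{Flow}(v^\epsilon+\epsilon\partial_t\sym{\delta} w,\sigma^\epsilon,\bZ)_{\cdot 0}$ from the remark preceding the theorem, together with the infinitesimal formula $\frac{d}{d\epsilon}\big|_{\epsilon=0}\psi^\epsilon_{t*}X = -\pounds_{\sym{\delta} w_t}X$ valid for any smooth tensor or vector field $X$, the associated first variation reads
$$
\int_0^T \left\langle \tfrac{\sym{\delta}\ell}{\sym{\delta} a}(u_t,a_t),-\pounds_{\sym{\delta} w_t}a_t\right\rangle_{\mfk{A}}\rmd t + \int_0^T \langle \lambda_t,\partial_t\sym{\delta} w_t - \pounds_{\sym{\delta} w_t}u_t\rangle_{\mfk{X}}\rmd t - \int_0^T\langle \lambda_t,\pounds_{\sym{\delta} w_t}\xi\rangle_{\mfk{X}}\rmd\bZ_t.
$$
Three ingredients convert this to the desired momentum equation: the diamond identity moves $\tfrac{\sym{\delta}\ell}{\sym{\delta} a}$ across $-\pounds_{\sym{\delta} w_t}a_t$, producing $\langle \tfrac{\sym{\delta}\ell}{\sym{\delta} a}\diamond a_t,\sym{\delta} w_t\rangle_{\mfk{X}}\rmd t$; the adjoint identity $\operatorname{ad}^*_u = \pounds_u$ recorded at the start of Section \ref{sec:Clebsch_var_princ} transfers each $\langle \lambda,\pounds_{\sym{\delta} w}Y\rangle_{\mfk{X}}$ onto $\sym{\delta} w$ as $\langle \pounds_Y\lambda,\sym{\delta} w\rangle_{\mfk{X}}$; and a rough integration by parts pairing the controlled path $\boldsymbol{\lambda}$ against the smooth field $\partial_t\sym{\delta} w$, based on Lemma \ref{lem:product_and_chain} and the vanishing of $\sym{\delta} w$ at $t=0,T$, converts $\int_0^T\langle \lambda_t,\partial_t\sym{\delta} w_t\rangle_{\mfk{X}}\rmd t$ into $-\int_0^T\langle \rmd\lambda_t,\sym{\delta} w_t\rangle_{\mfk{X}}$. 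Assembling and applying Lemma \ref{lem:fund_calc_var} yields the momentum equation with $m=\lambda$.

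The main obstacle is the last item — the rough integration by parts against a smooth variation field — since it requires the Gubinelli derivative of $\boldsymbol{\lambda}$ to interact cleanly with $\partial_t\sym{\delta} w$; Lemma \ref{lem:product_and_chain} does exactly this. A close second is the drift/diffusion separation in the $\sym{\delta}\lambda$-step, which is precisely where the true-roughness assumption on $\bZ$ is used, in contrast with the Clebsch principle, where this assumption is not needed (cf.\ Remark \ref{rem:Clebsch_contraint}).
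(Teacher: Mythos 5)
Your proof has the same skeleton as the paper's: vary $\sym{\delta}\lambda$ to force $v\equiv u$, $\sigma\equiv\xi$ via Theorem \ref{thm:truly_rough}; vary $\sym{\delta} u$ to get $\lambda=m=\frac{\sym{\delta}\ell}{\sym{\delta} u}$; vary $\sym{\delta} w$ and use $\frac{\partial}{\partial\epsilon}\big|_{\epsilon=0}\psi^\epsilon=\sym{\delta} w$, the diamond identity, and $\operatorname{ad}^*_v=\pounds_v$; and finish with Lemma \ref{lem:fund_calc_var}. All of that is correct and agrees with the paper.

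The one step that does not hold up as written is the ``rough integration by parts'' you insert before invoking Lemma \ref{lem:fund_calc_var} and attribute to Lemma \ref{lem:product_and_chain}. Lemma \ref{lem:product_and_chain}(ii) only applies when both factors already admit a decomposition $X_t = X_0+\int_0^t\beta_r\,\rmd r+\int_0^t\sigma_r\,\rmd\bZ_r$. At the point where you want to integrate $\int_0^T\langle\lambda_t,\partial_t\sym{\delta} w_t\rangle_{\mfk{X}}\,\rmd t$ by parts, $\boldsymbol{\lambda}$ is only known to be a controlled rough path; its $\rmd t/\rmd\bZ$-decomposition is precisely the thing the momentum equation will give you, so citing Lemma \ref{lem:product_and_chain} here is circular. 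This is exactly why Lemma \ref{lem:fund_calc_var} is formulated the way it is: it accepts the weak identity $\int_0^T\lambda_t\dot\phi_t\,\rmd t = \int_0^T\phi_t Y_t\,\rmd\bZ_t$ (with $\lambda$ merely continuous) and outputs the strong increment $\delta\lambda_{st}=\int_s^t Y_r\,\rmd\bZ_r$, performing the mollification/sewing argument internally. So you should skip your intermediate IBP step and apply Lemma \ref{lem:fund_calc_var} (after splitting $\sym{\delta} w$ into a time factor and a space factor) directly to the raw $\sym{\delta} w$-variation; this is what the paper does, and it closes the argument without presupposing the structure you are trying to establish.

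A minor point: you substitute $v\equiv u$ and $\sigma\equiv\xi$ inside the $\sym{\delta} w$-variation before writing it out, whereas the paper keeps $v,\sigma$ generic and substitutes at the end. Since the $\sym{\delta}\lambda$-variation already forces this identification and the three test fields are independent, your shortcut is harmless; it is just worth noting that this order of operations relies on having already disposed of the $\sym{\delta}\lambda$-equation.
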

\begin{remark}
The corresponding Hamilton-Pontryagin principle was derived for  SALT  in \cite{gay2018stochastic}.
\end{remark}
\begin{proof}
See Section \ref{proof:Ham_Pont}.
\end{proof}
\begin{remark}[Incompressible homogeneous Euler]
The rough incompressible homogeneous (unit density) Euler equations arise from  the choice of the `kinetic energy' Lagrangian $\ell: \dot{\mfk{X}}_{\mu_g}\rightarrow \bbR_+$ defined by
$$
\ell(u)=\int_{M} g(u,u) \mu_g,
$$
where $(M,g)$ is an oriented Riemannian manifold with corresponding  volume form $\mu_g$. We refer to Sections \ref{sec:Euler-spaces} and \ref{sec:Euler-spaces} for more details.
Letting $\dot{\mfk{X}}^{\vee}_{\mu_g}$ denote  the space of one-form densities modulo exact and harmonic forms (see Definition \ref{def:canonical_dual_incompressible}), we find
$$
m= \lambda=\frac{\sym{\delta}\ell}{\sym{\delta} u}=[u^{\flat}\otimes \mu_g]\in \dot{\mfk{X}}^{\vee}_{\mu_g},
$$
and hence that $(u,\eta,\sym{\lambda})$ is a critical point  of  $\mathit{HP}_{\bZ}$  iff 
\begin{equation*}
\begin{aligned}
\rmd [u^{\flat}_t \otimes \mu_g]&+  \pounds_{u_t}[  u^{\flat}_t\otimes \mu_g]\rmd t +   \pounds_{\xi}  [u^{\flat}_t\otimes \mu_g]\rmd \bZ_t \overset{\dot{\mfk{X}}^{\vee}_{\mu_g}}{=}\frac{1}{2}[\bd g(u_t,u_t)\otimes \mu_g]\rmd t,
\\
\rmd \eta_tX&=u_t(\eta_tX)\rmd t + \xi(\eta_tX)\rmd \bZ_t,\;\;t\in (0,T], \;\; \eta_0X=X\in M.
\end{aligned}
\end{equation*}
The first equation is equivalent to 
$$
\begin{cases}
\rmd u^{\flat}_t  + \pounds_{u_t} u^{\flat}_t \rmd t \pounds_{\xi}u^{\flat}_t \rmd \bZ_t = \frac{1}{2}\bd g(u_t,u_t)-\bd \rmd \bp_t - \rmd \bc_t ,\\
\bd^* u^{\flat}=\operatorname{div} u=0 ,\\
H(u^{\flat})=0 ,
\end{cases}
$$
where  $\rmd \bp_t = p_t \rmd t +q_t \rmd \bZ_t$ and    $\rmd \bc_t = c_t \rmd t + \tilde{c}_t \rmd \bZ_t$  are the Lagrangian multipliers corresponding to the divergence and harmonic-free constraints. It follows that $\omega = \bd u^{\flat}$ satisfies 
$$
\rmd \omega _t  + \pounds_{u_t} \omega_t \rmd t + \pounds_{\xi}\omega_t \rmd \bZ_t =0.
$$
In \cite{crisan2021solution}, we  extend the work of  \cite{hofmanova2019rough}, which studied the viscous case on the torus $M=\bbT^d$, to show that for an  initial-velocity $u_0\in \mfk{X}_{W^{m}_2}$ with $m>\frac{d}{2}+1$, there exists a unique  maximal Cauchy development $u\in C([0,T^*);\mfk{X}_{W^{m}_2})\cap C^{\alpha}([0,T_{\textnormal{max}};\mfk{X}_{W^{m-3}_2})$ . Moreover, we show that if $T_{\textnormal{max}}<\infty$, then 
$$
\int_0^{T^*}|\omega_t|_{\Omega^2_{L^\infty}}\mu_g=+\infty;
$$
that is a Beale-Kato-Majda blowup criterion holds. In dimension two, identifying $\omega$ with a scalar $\tilde{\omega}=\star \omega\in \Omega^0$, we find that  $|\tilde{\omega}_t|_{L^p}=|\tilde{\omega}_0|_{L^p}$ for all $p$ so that $T^*=+\infty$.   

Therefore, taking the initial data $u_0\in \mfk{X}_{C^\infty}$ to be smooth, we obtain  a  solution $u\in C^{\alpha}_T(\mfk{X}_{C^{\infty}})$ on any interval $[0,T]$ with $T<T_{\textnormal{max}}$, and hence we may  construct the  flow $\eta=\operatorname{Flow}(u,\xi,\bZ)\in \operatorname{Diff}_{\bZ,T,C^\infty}$. Consequently, we obtain a critical point $(u,\eta, \sym{\lambda})$ of  $\mathit{HP}_{\bZ}$ for any $T<T_{\textnormal{max}}$ with $\sym{\lambda}=(\lambda, \lambda')=([u^{\flat}\otimes \mu_g], \pounds_{\xi}[u^{\flat}\otimes \mu_g]).$
\end{remark}

\subsection{An Euler--Poincar\'e variational principle for geometric rough paths}\label{sec:EP_var_princ}
In this  section, we assume all stated quantities exist and are smooth, and  work formally (see Remark \ref{rem:EulerPoincare}). 

\begin{theorem}[Euler--Poincar\'e variational principle] \label{EP-thm}
Consider a path $\eta=\operatorname{Flow}(u,\xi,\bZ) \in \operatorname{Diff}_{\bZ,T,C^\infty}$ .  The following are equivalent:
\begin{enumerate}[(i)]
\item
The constrained variational principle
$$
\delta \int _{0} ^{T}  \ell(u_t ,a_t) dt = 0
$$
holds on $C_T^{\alpha}(\mfk{X}_{C^\infty}) \times C_T^{\alpha}(\mfk{A}_{C^\infty})$ using variations of the form
\begin{equation} \label{epvariations}
\sym{\delta} u \rmd t= \partial_t \sym{\delta}w\rmd t - \operatorname{ad}_{\rmd x_t} \sym{\delta}\sym{w} \qquad \textnormal{and} \quad
\sym{\delta} a =  -\,\pounds_{\sym{\delta}\sym{w}} a ,
\end{equation}
for arbitrarily chosen  $\sym{\delta}w \in C_T^{\infty}(\mathfrak{X}_{C^\infty})$ which vanishes at $t=0$ and $t=T$, where $\rmd x_t = u_t \rmd t + \xi \rmd \bZ_t.$ 
\item 
The \textbf{ Euler--Poincar\'{e}} equations on geometric rough paths hold: that is, for all $t\in [0,T]$,
\begin{align*}
m_t &+  \int_0^t\pounds_{u_s} m_s \rmd s +   \int_0^t\pounds_{\xi} m_s\rmd \bZ_s \overset{\mfk{X}^{\vee}_{C^\infty}}{=} m_0+\int_0^t\frac{\sym{\delta} \ell}{\sym{\delta} a}(u_s,a_s) \diamond a_s \rmd s, \quad m  = \frac{\sym{\delta} \ell}{\sym{\delta} u}(u,a),\\
a_t &+ \int_0^t\pounds_{u_s}a_s\rmd s + \int_0^t\pounds_{\xi} a_s \rmd \bZ_s  \overset{\mfk{A}_{C^\infty}}{=} a_0, \quad a_t=\eta_{t*}a_0.
\end{align*}
\end{enumerate}
\end{theorem}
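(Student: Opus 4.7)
The plan is to compute the first variation of the action directly, substituting the constrained variations \eqref{epvariations}, collecting all contributions against the free variation $\sym{\delta}w$, and invoking the fundamental lemma of the rough calculus of variations. The advection equation for $a$ in (ii) is not actually produced by the variation: since $a$ is understood as the push-forward $a_t=\eta_{t*}a_0$ (which is consistent with, and motivates, the admissible form $\sym{\delta}a = -\pounds_{\sym{\delta}w}a$), the push-forward formula \eqref{eq:push_forward_tensor} of the Lie chain rule delivers it immediately (cf.\ Remark \ref{rem:RPDE_Inverse}). The substantive content to establish is the momentum equation.

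First I would use Assumption \ref{asm:Lagrangian} to obtain
\begin{equation*}
\sym{\delta}\int_0^T \ell(u_t,a_t)\,\rmd t = \int_0^T \langle m_t,\sym{\delta}u_t\rangle_{\mfk{X}}\,\rmd t + \int_0^T \Big\langle \tfrac{\sym{\delta}\ell}{\sym{\delta}a}(u_t,a_t),\sym{\delta}a_t\Big\rangle_{\mfk{A}}\,\rmd t,
\end{equation*}
with $m := \tfrac{\sym{\delta}\ell}{\sym{\delta}u}(u,a)$. Substituting $\sym{\delta}a = -\pounds_{\sym{\delta}w}a$ and applying Definition \ref{diamond-def} rewrites the $a$-term as $\int_0^T \langle \tfrac{\sym{\delta}\ell}{\sym{\delta}a}\diamond a_t,\sym{\delta}w_t\rangle_{\mfk{X}}\,\rmd t$. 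Inserting $\sym{\delta}u\,\rmd t = \partial_t\sym{\delta}w\,\rmd t - \operatorname{ad}_{u_t}\sym{\delta}w\,\rmd t - \operatorname{ad}_{\xi}\sym{\delta}w\,\rmd\bZ_t$ into the first integral and using the adjoint identities $\operatorname{ad}_u^* = \pounds_u$ and $\operatorname{ad}_\xi^* = \pounds_\xi$ recalled at the start of Section \ref{sec:Clebsch_var_princ} gives
\begin{equation*}
\int_0^T \langle m_t,\sym{\delta}u_t\rangle_{\mfk{X}}\,\rmd t = \int_0^T \langle m_t,\partial_t\sym{\delta}w_t\rangle_{\mfk{X}}\,\rmd t - \int_0^T \langle \pounds_{u_t} m_t,\sym{\delta}w_t\rangle_{\mfk{X}}\,\rmd t - \int_0^T \langle \pounds_\xi m_t,\sym{\delta}w_t\rangle_{\mfk{X}}\,\rmd\bZ_t.
\end{equation*}

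The crux is a rough integration by parts on the first integral on the right. Viewing $t\mapsto \langle m_t,\sym{\delta}w_t\rangle_{\mfk{X}}$ as the image of a product of two controlled rough paths with respect to $\bZ$, Lemma \ref{lem:product_and_chain} together with the vanishing of $\sym{\delta}w$ at the endpoints yields
\begin{equation*}
0 = \langle m,\sym{\delta}w\rangle_{\mfk{X}}\Big|_0^T = \int_0^T \langle \rmd m_t,\sym{\delta}w_t\rangle_{\mfk{X}} + \int_0^T \langle m_t,\partial_t\sym{\delta}w_t\rangle_{\mfk{X}}\,\rmd t,
\end{equation*}
in which the first integral is the rough one-form encoding the momentum evolution. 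Collecting everything, the total variation equals
\begin{equation*}
\int_0^T \Big\langle -\rmd m_t - \pounds_{u_t}m_t\,\rmd t - \pounds_\xi m_t\,\rmd\bZ_t + \tfrac{\sym{\delta}\ell}{\sym{\delta}a}(u_t,a_t)\diamond a_t\,\rmd t,\ \sym{\delta}w_t \Big\rangle_{\mfk{X}},
\end{equation*}
and Lemma \ref{lem:fund_calc_var}, applied to arbitrary smooth compactly supported $\sym{\delta}w$, produces the Euler--Poincar\'e momentum equation upon integration in time. The converse implication reads the same identities in reverse.

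The main obstacle is precisely the point flagged as an open problem in the introduction. To make the rough product rule for $\langle m,\sym{\delta}w\rangle_{\mfk{X}}$ rigorous one needs a priori that $m$ is a controlled rough path with Gubinelli derivative $-\pounds_\xi m$, but this structure is part of what one is trying to derive; and, for the final step, one needs the family $\{\partial_t\sym{\delta}w - \operatorname{ad}_{\rmd x}\sym{\delta}w : \sym{\delta}w \in C_T^\infty(\mfk{X}_{C^\infty})\}$ to be rich enough in $C_T^\alpha(\mfk{X}_{C^\infty})$ to separate dual pairings in $\mfk{X}_{C^\infty}^\vee$ (the completeness-of-variations question). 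Both points are the reason this subsection is phrased formally under the blanket smoothness assumption, and both dovetail with the well-posedness questions noted for the resulting RPDEs.
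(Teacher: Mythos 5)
Your proposal is correct and follows essentially the same route as the paper's own proof in Section \ref{proof:EP-thm}: substitute the constrained variations, convert the $\sym{\delta}a$ term to a diamond term via Definition \ref{diamond-def}, move $\operatorname{ad}_{\rmd x_t}$ across the pairing using $\operatorname{ad}^*_v=\pounds_v$, integrate by parts in time using the endpoint conditions $\sym{\delta}w_0=\sym{\delta}w_T=0$, and invoke Lemma \ref{lem:fund_calc_var}; the advection equation is read off from $a_t=\eta_{t*}a_0$ and the Lie chain rule, exactly as in the paper. Your explicit appeal to Lemma \ref{lem:product_and_chain} to justify the rough integration by parts on $\langle m,\sym{\delta}w\rangle_{\mfk{X}}$ is a useful spelling-out of a step the paper compresses into the phrase ``integrating by parts.'' One small caveat: your closing concern about the family $\{\partial_t\sym{\delta}w-\operatorname{ad}_{\rmd x_t}\sym{\delta}w\}$ needing to be ``rich enough to separate dual pairings'' is slightly off target --- in both directions of the equivalence it is $\sym{\delta}w$ itself that appears as the free test object in the final pairing, and $\sym{\delta}w$ ranges over all of $C_T^\infty(\mfk{X}_{C^\infty})$, so separation is not the issue. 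The genuine open problem, flagged in Remark \ref{rem:EulerPoincare}, is whether the constrained variation form \eqref{epvariations} actually arises from a curve of flows $\eta^\epsilon$ in $\operatorname{Diff}_{\bZ,C^\infty}$; the theorem as literally stated sidesteps this by postulating \eqref{epvariations}, which is why, as you correctly observe, the argument (like the paper's) remains formal.
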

\begin{proof}
See Section \ref{proof:EP-thm}.
\end{proof}
\begin{remark}
Recalling that for all $u\in \mfk{X}^\infty$ the adjoint of $\operatorname{ad}_{u}=-\pounds_{u}: \mfk{X}_{C^\infty}\rightarrow \mfk{X}_{C^\infty}$ is $\operatorname{ad}^*_u=\pounds_{u}: \mfk{X}_{C^\infty}^{\vee}\rightarrow \mfk{X}_{C^\infty}^{\vee}$, we have \begin{equation}\label{eulerpoincare}
\begin{aligned}
\rmd m_t
&+ \operatorname{ad}_{{\rm d}x_t}^{\ast} m_t
= \frac{\sym{\delta} \ell}{\sym{\delta} a_t} \diamond a_t\\
\rmd a_t &+ \pounds_{\rmd x_t}a_t=0.
\end{aligned}
\end{equation}
\end{remark}
\begin{remark}\label{rem:EulerPoincare}
This is not strictly a variational principle in the same sense as the standard Hamilton's principle.
It is akin to the classic La\-gran\-ge d'Al\-em\-bert principle for dynamics with nonholonomic constraints, because the variationsof $\sym{\delta} u$ and $\sym{\delta} a$ in \eqref{epvariations} are restricted in terms of $\sym{\delta} w$. These restrictions are discussed next.

Let $\eta = \operatorname{Flow}(u,\xi, \bZ)_{\cdot, 0}$.  Assume that for all $\sym{\delta} w\in \mfk{X}_{C^\infty}$ and $\sym{\delta}w_0=\sym{\delta}w_T=0$ we can construct  a variation $\{\eta^{\epsilon}\}_{\epsilon\in [-1,1]}$ such that 
$$
\rmd \eta^{\epsilon}_tX = u^{\epsilon}_t(\eta_t^{\epsilon} X) \rmd t +  \xi(\eta^{\epsilon}_t X)   \rmd \bZ_t,\; \; t\in (0,T], \quad \eta_0^{\epsilon}X=X\in M, 
$$
and for all $t\in [0,T]$,
\begin{enumerate}[(i)]
\item $$
\frac{\partial}{\partial \epsilon}\frac{\partial}{\partial t }  \eta_t^{\epsilon}\big|_{\epsilon=0}=\frac{\partial}{\partial t } \frac{\partial}{\partial \epsilon} \eta_t^{\epsilon}\big|_{\epsilon=0};
$$ 
\item $$
 \sym{\delta}w_t=\left(\frac{\partial } {\partial \epsilon}\big|_{\epsilon=0}\eta_t^{\epsilon}\right) \circ \eta_t^{-1} \quad \Leftrightarrow \quad \frac{\partial } {\partial \epsilon}\big|_{\epsilon=0}\eta_t^{\epsilon}X= \sym{\delta}w_t(\eta_tX).
 $$
\end{enumerate}
Define
$
\sym{\delta} u_t:=\frac{\partial}{\partial \epsilon}\big|_{\epsilon=0} u^{\epsilon}_t .
$
Then 
$$
\rmd  \frac{\partial}{\partial \epsilon} \eta_t^{\epsilon}\big|_{\epsilon=0}=\rmd  (\sym{\delta}w_t\circ \eta_t)= (\partial_t\sym{\delta}w_t)\circ \eta_t \rmd t +  (T\sym{\delta}w_t\circ  \eta_t) (u_t\circ \eta_t \rmd t + \xi \circ \eta_t \rmd \bZ_t)
$$
and
$$
\frac{\partial}{\partial \epsilon}\rmd  \eta_t^{\epsilon}\big|_{\epsilon=0}=\frac{\partial}{\partial \epsilon} (u^{\epsilon}\circ \eta^{\epsilon}_t \rmd t+ \xi \circ \eta^{\epsilon}_t\rmd \bZ_t)\big|_{\epsilon=0}=\left(\sym{\delta} u_t \circ \eta_t +  (Tu_t \circ \eta_t) (\sym{\delta} w_t\circ \eta_t)\right)\rmd t+ (T\xi \circ \eta_t) (\sym{\delta} w_t\circ \eta_t) \rmd \bZ_t
$$
Using the equality of mixed derivatives, we find
\begin{align*}
\sym{\delta} u_t \circ \eta_t \rmd t &=\left((\partial_t\sym{\delta}w_t)\circ \eta_t + [\sym{\delta}w_t, u_t]\circ \eta_t \right)\rmd t + [\sym{\delta}w_t, \xi]\circ \eta_t\bZ_t\\ &=\left((\partial_t\sym{\delta}w_t)\circ \eta_t -\operatorname{ad}_{u_t} \sym{\delta}w_t\circ \eta_t \right)\rmd t -\operatorname{ad}_{\xi} \sym{\delta}w_t\circ \eta_t\bZ_t.
\end{align*}
It follows from $a^{\epsilon}_t= \eta^{\epsilon}_{t*}a_0$ that $\sym{\delta} a_t= \frac{\partial}{\partial \epsilon}\big|_{\epsilon=0}\eta^{\epsilon}_{t*}a_0 =-\pounds_{\sym{\delta} w}a_t.$ 
Two issues now arise: i) we do  not have a proof that such variations  exist as we did for the Hamilton-Pontryagin variational principle ii) it is not clear how to deduce 
$$
\sym{\delta} u_t \rmd t =\left(\partial_t\sym{\delta}w_t -\operatorname{ad}_{u_t} \sym{\delta}w_t \right)\rmd t -\operatorname{ad}_{\xi} \sym{\delta}w_t\rmd \bZ_t.
$$
We shall leave the clarification of the Euler-Poincar\'e variations as an open problem. 
\end{remark}

\subsection{A Lie--Poisson bracket for Hamiltonian dynamics on geometric rough paths}\label{sec:LP_bracket}

\begin{definition} 
We define $h: \bclD_{Z,T}(\mfk{X}^{\vee})\oplus \bclD_{Z,T}(\mfk{A})\rightarrow \bclD_{Z,T}(\bbR)$ by 
$$
h_t(m,a) :=\int_0^t \left(\langle m_s, u_s\rangle_{\mfk{X}}- \ell(u_s,a_s) \right)\rmd s  + \int_0^t \langle m_s, \xi\rangle_{\mfk{X}} \rmd \bZ_s, \quad (m,a)\in \bclD_{Z,T}(\mfk{X}^{\vee})\oplus \bclD_{Z,T}(\mfk{A}), \;\; t\in [0,T],
$$
where  $u$ denotes the inverse of  $\frac{\sym{\delta} \ell}{\delta u}(\cdot, a): \mfk{X}\rightarrow \mfk{X}^{\vee}$ applied to $m$; that is, $m=\frac{\sym{\delta} \ell}{\delta u}(u, a)$. 
\end{definition}
The rough Hamiltonian $h_t(m,a)$ is the sum of the deterministic Hamiltonian defined by Legendre transformation associated with the Lagrangian $\ell$ and given by
$
H(m,a)= \langle m, u\rangle_{\mfk{X}}- \ell(u,a)
$,
plus $G(m)=\langle m, \xi_k\rangle_{\mfk{X}}$, so that 
$$
h_t(m,a) =\int_0^{t} H(m_s,a_s)\rmd s  + \int_0^{t} G(m_s) \rmd \bZ_s.
$$

Let us take variations of $h(m,a)$ in $m$ and $a$.
For arbitrary $\sym{\delta}m\in \mfk{X}^{\vee}_{C^\infty}$ and $\sym{\delta}a\in \mfk{A}_{C^\infty}$, we find
\begin{align*}
\sym{\delta} h_t &= \int_0^t \left(\langle m_s, \frac{\sym{\delta}u}{\sym{\delta} m}\rangle_{\mfk{X}}+\langle \sym{\delta }m_s, u_s\rangle_{\mfk{X}}- \langle \frac{\sym{\delta}\ell}{\sym{\delta} u}(u_s,a_s), \frac{\sym{\delta}u}{\sym{\delta} m}\rangle_{\mfk{X}}-\langle \frac{\sym{\delta}\ell}{\sym{\delta} a}(u_s,a_s),\sym{\delta}a\rangle_{\mfk{A}} \right)\rmd s + \int_0^t \langle \sym{\delta }m_s, \xi\rangle_{\mfk{X}} \rmd \bZ_s\\
&=\int_0^t \langle \sym{\delta }m_s, \rmd x_s\rangle_{\mfk{X}} - \int_0^t\langle \frac{\sym{\delta}\ell}{\sym{\delta} a}(u_s,a_s),\sym{\delta}a\rangle_{\mfk{A}}\rmd s,\quad \forall t\in [0,T],
\end{align*}
where in the second equality we have used  $m:=\frac{\sym{\delta} \ell}{\delta u}$ and set 
$
\rmd x_t := u_t \rmd t + \xi \rmd \bZ_t.
$
Thus,
$$
\rmd \frac{\sym{\delta} h_t}{\sym{\delta} m}(m,a)= \rmd x_t
\quad \textnormal{and} \quad 
\rmd \frac{\sym{\delta} h}{\sym{\delta} a}(m,a)=-\, \frac{\sym{\delta}\ell}{\sym{\delta} a}(u,a)\rmd t
,$$
which is to say
$$
\frac{\sym{\delta} h_t}{\sym{\delta} m}(m,a)= \int_0^tu_s \rmd s +\int_0^t \xi \rmd \bZ_s=\int_0^t \frac{\sym{\delta} H}{\sym{\delta} m}(m_s,a_s)\rmd s +\int_0^t \frac{\sym{\delta} G}{\sym{\delta} m}(m_s)\rmd s
$$
$$
\frac{\sym{\delta} h_t}{\sym{\delta} a}(m,a)=-\, \frac{\sym{\delta}\ell}{\sym{\delta} a}(u_t,a_t)=\frac{\sym{\delta}H}{\sym{\delta} a}(m_t,a_t).
$$

\begin{corollary}[Lie--Poisson Hamiltonian form]\label{cor-LPHamForm}
The Euler--Poincar\'e equations in \eqref{eulerpoincare} can be written 
in Lie--Poisson bracket form as
$$
\begin{bmatrix}
 m_t \\  a_t
\end{bmatrix}
\overset{\mfk{X}^{\vee}\oplus \mfk{A}}{=}\begin{bmatrix}
m_0 \\  a_0
\end{bmatrix}
- 
\int_0^t\begin{bmatrix}
\operatorname{ad}_{\Box}^{\ast}  m_s  & \Box \diamond  a_s
\\
\pounds_{\Box} a_s & 0
\end{bmatrix}
\begin{bmatrix}
{\rmd \sym{\delta} h_s}/{\sym{\delta} m}(m,a)  \\  \rmd {\sym{\delta} h_s }/{\sym{\delta} a}(m,a) 
\end{bmatrix},
$$
in which the boxes ($\Box$) represent substitution in the  operator.
For  arbitrary $f: \mfk{X}^{\vee}\times \mfk{A}\rightarrow \bbR$ such that $\frac{\sym{\delta} f}{\sym{\delta} m}(m,a)\in C_T(\mfk{X})$ and $\frac{\sym{\delta} f}{\sym{\delta} a}(m,a)\in \bclD_{Z,T}(\mfk{A}^{\vee})$ exist (see Lemma \ref{lem:product_and_chain}), we have
\begin{align*} %\label{LiePoissonbracket}
  f(m_t,a_t)
&
=f(m_0,a_0)+\int_0^t\langle \rmd m_s, \frac{\sym{\delta} f}{\sym{\delta} m}(m_s,a_s)\rangle_{\mfk{X}}+ \int_0^t\langle  \rmd a_s,\frac{\sym{\delta} f}{\sym{\delta} a}(m_s,a_s)\rangle_{\mfk{A}^{\vee}}\\
&
=
- 
\int_0^t\left\langle \begin{bmatrix}
\operatorname{ad}_{\Box}^{\ast}  m_s  & \Box \diamond  a_s
\\
\pounds_{\Box} a_s & 0
\end{bmatrix}
\begin{bmatrix}
{\rmd \delta h}/{\sym{\delta} m}(m_s,a_s) \\  \rmd {\delta  h}/{\sym{\delta} a(m_s,a_s)} 
\end{bmatrix}
, \begin{bmatrix}
{\sym{\delta} f}/{\sym{\delta} m(m_s,a_s)} \\ {\sym{\delta} f}/{\sym{\delta} a(m_s,a_s)}
\end{bmatrix}\right\rangle_{\mfk{X}\oplus \mfk{A}^{\vee}}\\
&=:\int_0^t\Big\{f,\rmd h_s\Big\}(m_s,a_s),
\end{align*}
in which the last equality adopts the notation for the semidirect-product Lie--Poisson bracket given in \cite{holm1998euler}. In differential notation, we find
$$
\rmd f(m_t,a_t) = \{f, \rmd h_t\}(m_t,a_t)=\{f,H\}(m_t,a_t)\rmd t + \{f,G\}(m_t,a_t)\rmd \bZ_t.
$$
\end{corollary}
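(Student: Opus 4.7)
My plan is to treat the corollary in two parts: first verifying that the Lie--Poisson matrix formula reproduces the Euler--Poincar\'e system \eqref{eulerpoincare}, and then using the rough chain rule to derive the bracket representation of $f(m_t,a_t)$. The preparation immediately preceding the statement already supplies the variational derivatives
\[
\rmd \frac{\sym{\delta} h_t}{\sym{\delta} m}(m,a)=\rmd x_t = u_t\,\rmd t + \xi\,\rmd\bZ_t,\qquad \rmd\frac{\sym{\delta} h_t}{\sym{\delta} a}(m,a)=-\frac{\sym{\delta}\ell}{\sym{\delta} a}(u_t,a_t)\,\rmd t,
\]
so the work is essentially bookkeeping; the only subtle point is to justify the rough chain rule for a general functional $f$.

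For the matrix form, I would just read off the two rows and substitute the above expressions for $\rmd\sym{\delta}h/\sym{\delta}m$ and $\rmd\sym{\delta}h/\sym{\delta}a$. The first row yields
\[
m_t - m_0 = -\int_0^t \operatorname{ad}^*_{\rmd x_s}m_s \;-\; \int_0^t \Big(\rmd\tfrac{\sym{\delta} h_s}{\sym{\delta} a}\Big)\diamond a_s = -\int_0^t \operatorname{ad}^*_{\rmd x_s} m_s + \int_0^t \frac{\sym{\delta}\ell}{\sym{\delta} a}(u_s,a_s)\diamond a_s\,\rmd s,
\]
which, together with the identity $\operatorname{ad}^*_u = \pounds_u$ on $\mfk{X}^{\vee}$ stated in Section~\ref{sec:Clebsch_var_princ}, coincides precisely with the first equation of \eqref{eulerpoincare} when expanded as the sum of a drift integral and a rough integral driven by $\rmd\bZ$. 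The second row gives $a_t-a_0 = -\int_0^t \pounds_{\rmd x_s}a_s$, which is the advection equation for $a$ in \eqref{eulerpoincare}. This verifies the matrix form.

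For the Lie--Poisson bracket representation of $f(m_t,a_t)$, I would apply the rough chain/product rule (Lemma \ref{lem:product_and_chain}) to the composition $t\mapsto f(m_t,a_t)$, which is permissible under the stated regularity of the Gubinelli derivatives of $\sym{\delta} f/\sym{\delta} m$ and $\sym{\delta} f/\sym{\delta} a$. This produces the first displayed equality of the corollary,
\[
f(m_t,a_t)-f(m_0,a_0)=\int_0^t\Big\langle \rmd m_s, \tfrac{\sym{\delta} f}{\sym{\delta} m}(m_s,a_s)\Big\rangle_{\mfk{X}} + \int_0^t\Big\langle \rmd a_s, \tfrac{\sym{\delta} f}{\sym{\delta} a}(m_s,a_s)\Big\rangle_{\mfk{A}^{\vee}}.
\]
Substituting the rough differentials of $m$ and $a$ from the matrix form already proved, and using the defining relation $\langle \lambda\diamond a, u\rangle_{\mfk{X}} = -\langle\lambda,\pounds_u a\rangle_{\mfk{A}}$ of the diamond operator together with the self-adjointness $\operatorname{ad}^*_u=\pounds_u$, lets me recognize the resulting expression as minus the pairing of the matrix operator acting on $(\rmd\sym{\delta} h/\sym{\delta} m,\rmd\sym{\delta} h/\sym{\delta} a)^{\top}$ with $(\sym{\delta} f/\sym{\delta} m,\sym{\delta} f/\sym{\delta} a)^{\top}$. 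This is exactly the semidirect-product Lie--Poisson bracket $\int_0^t\{f,\rmd h_s\}(m_s,a_s)$ in the notation of \cite{holm1998euler}. Splitting $\rmd h_s$ into its $\rmd s$ and $\rmd \bZ_s$ contributions via the definition of $h$, the bracket decomposes into $\{f,H\}\rmd t+\{f,G\}\rmd\bZ_t$, giving the final differential form.

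The only real obstacle is checking the hypotheses of Lemma \ref{lem:product_and_chain}: one must be sure that both components $(\sym{\delta} f/\sym{\delta} m, \sym{\delta} f/\sym{\delta} a)$, evaluated along the controlled paths $(m,a)$, are themselves admissible controlled rough paths with respect to $\bZ$ so that the rough integrals against $\rmd m$ and $\rmd a$ make pathwise sense. This is precisely the point encoded in the statement's regularity assumption on $f$, so granted that, the computation above is a direct verification. Everything else reduces to the pairing identities already established in Definition \ref{diamond-def} and the introduction to Section \ref{sec:Clebsch_var_princ}.
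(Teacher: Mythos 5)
Your proposal is correct and is essentially the direct computation the paper intends; the paper gives no separate proof because the corollary is a formal consequence of the Legendre--transform identities for $\rmd\,\sym{\delta} h/\sym{\delta} m$ and $\rmd\,\sym{\delta} h/\sym{\delta} a$ together with the rough chain/product rule, exactly as you lay out. The only minor inaccuracy is that once the rough differentials of $m$ and $a$ are substituted into the two pairings, the bracket form is read off immediately, so the diamond identity and $\operatorname{ad}^*_u=\pounds_u$ are not invoked again at that stage --- they were already absorbed in passing between \eqref{eulerpoincare} and the matrix form.
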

\begin{remark}
Stochastic Hamilton equations were introduced along parallel lines with the deterministic canonical theory in \cite{bismut1982mecanique}. These results were later extended to include reduction by symmetry in \cite{lazaro2007stochastic}. Reduction by symmetry of expected-value stochastic variational principles for Euler–Poincaré equations was developed in \cite{arnaudon2014stochastic} and \cite{chen2015constrained}. Stochastic variational principles were also used in constructing stochastic variational integrators in Bou-Rabee and Owhadi \cite{bou2009stochastic}. 

\end{remark}

\section{Examples}\label{sec:examples}
\subsection{Rough incompressible Euler equation via Lagrange multipliers}\label{sec:Euler-Lag_mult}

Let $(M,g)$ denote a smooth, compact, connected, oriented  $d$-dimensional Riemannian manifold without boundary.  Denote by $\mu_g\in \operatorname{Dens}_{C^{\infty}}$  the associated volume form,  which is given in local coordinates by 
$$
\mu_g=\sqrt{|\operatorname{det}[g_{ij}]|}\; dx^1\wedge \cdots\wedge  dx^d.
$$
Let $A=\Lambda^dT^*M\oplus \Lambda^0T^*M$ and $A^{\vee}= \Lambda^0T^*M\oplus \Lambda^dT^*M$.  Denote the advected variables by $\ba = (\bD, \sym{\rho})\in \mfk{A}=\operatorname{Dens}_{\clF_3}\oplus \Omega^0_{\clF_3}$ and the associated Lagrangian multipliers by $\sym{\lambda}=(\sym{f},\sym{\beta})\in\mfk{A}^{\vee}= \Omega^0_{\clF_4}\oplus \operatorname{Dens}_{\clF_4}$. In this example, we will explain how to impose incompressibility  through Lagrangian constraints. In the following example, we will explain how to impose incompressibility through projections and spatial constraints. Towards this end, we  introduce an additional Lagrangian multiplier $\sym{\pi} \in \bclD_{Z,T}(\clF_3\cap \clF_4)$ to enforce incompressibility.
We consider the Clebsch action functional
$$
S^{\mathit{Clb}_{\bZ}}(u, \ba, \sym{\lambda}, \sym{\pi})
= \int_0^T \ell(u_t, a_t)\rmd t  
+\scp{\rmd \sym{\pi}_t}{D_t- \rho_t\mu_g}_{\Omega^d}+\scp{f_t}{\rmd D_t + \pounds_{dx_t}D_t}_{\Omega^d}  
+ \scp{\beta_t}{\rmd \rho_t + \pounds_{dx_t}\rho_t }_{\Omega^0},
$$
where  $\rmd x_t = u_t \rmd t + \xi \rmd \bZ_t$ and the Lagrangian $\ell: \mfk{X}\times \mfk{A}\rightarrow \bbR$ is defined by
$$
\ell(u,a)=\frac{1}{2}\int_{M} g(u,u)D=\frac{1}{2}\langle u^{\flat}\otimes D,u\rangle_{\mfk{X}},
$$
where the $\flat$ operation is defined in Section \ref{sec:Hodge}.
We take variations of $(u, \ba, \sym{\lambda})$ as defined  in Section \ref{sec:Clebsch_var_princ}.  A variation of $\sym{\pi}$  is defined to be $\sym{\pi}^{\epsilon}=\sym{\pi}+\epsilon \sym{\delta} \pi$ for $\sym{\delta} \pi \in C^{\infty}_T(C^\infty)$ such that $\sym{\delta}\pi_0=\sym{\delta}\pi_T=0$.

It follows that all $(u,a)\in \mfk{X}\times \mfk{A}$ ,
$$
m=\frac{\sym{\delta} \ell}{\sym{\delta} u}(u,a )= u^{\flat}\otimes D\in \mfk{X}^{\vee}, \qquad u=\frac{\sharp}{D}m,
\qquad \textnormal{and} \quad \frac{\sym{\delta} \ell}{\delta a}(u,a)=\left(\frac{1}{2}g(u,u),\;\;0\right)\in \mfk{A}^{\vee},
$$
where the diffeomorphism $\frac{\sharp}{D}$ is defined in Section \ref{sec:duals_of_bundles}.
Let us now compute the relevant diamond terms (see Definition \ref{diamond-def}). For all $(h, \nu) \in C^{\infty}\times  \operatorname{Dens}_{C^\infty}$ and $u\in \mfk{X}_{C^{\infty}}$, we have 
$$
-\langle h, \pounds_{u}\nu \rangle_{\Omega^d}=-\int_{M} h \pounds_{u}\nu=-\int_{M} h \bd \bi_{u}\nu=\int_{M}\nu\bi_{u}\bd h =\langle \pounds_{u}h,\nu\rangle_{\Omega^d}=\langle \bd h \otimes \nu,u\rangle_{\mfk{X}},
$$
which implies $h\diamond \nu =\bd g \otimes \nu$. Moreover, since
$$
-\langle \nu, \pounds_{u}h\rangle_{\Omega^0}=-\int_{M}\nu i_{u}\bd h=-\langle \bd h \otimes \nu , u\rangle_{\mfk{X}} \quad \Rightarrow \quad \nu \diamond h = -\bd h \otimes \nu.
$$

With minor modifications of the Proof of Theorem \ref{thm:Clebsch}, we find that $(u, \ba, \sym{\lambda}, \sym{\pi})$ is a critical point of  $S^{\mathit{Clb}_{\bZ}} = 0$, if and only if for all $t\in [0,T]$:
\begin{align*}
 m_t+\int_0^t\pounds_{dx_s} m_s \overset{\mfk{X}^{\vee}}{=}m_0+\int_0^t\frac{1}{2}g(u_s,u_s)\diamond D_s\rmd s &+\int_0^t \rmd  \sym{\pi}_s\diamond D_s - \int_0^t\rmd  \sym{\pi}_s \mu_g\diamond \rho_s,\;\; m_t=f_t\diamond D_t + \beta_t \diamond \rho_t, \\
D_t+\int_0^t \pounds_{dx_s}D_s\overset{\operatorname{Dens}}{=}D_0,  \quad & \quad 
 \rho_t +\int_0^t \pounds_{dx_s}\rho_s\overset{\Omega^0}{=}\rho_0, \quad D_t=\rho_t\mu_g,\\
 f_t+\int_0^t \pounds_{dx_s}f_s \overset{\Omega^0}{=}  f_0 
 + \int_0^t \sym{\pi}_s \rmd s + \frac{1}{2}\int_0^tg(u_s,u_s) \rmd s \quad & \quad  \beta_t+\int_0^t \pounds_{dx_s}\beta_t \overset{\operatorname{Dens}}
 {=}\beta_0 - \int_0^t \sym{\pi}_s \mu_g \rmd s .
\end{align*}
Substituting   $ D=\rho\mu_g$ into the equation for $D$ and applying the diffeomorphism $\frac{1}{\mu_g}$ (see \ref{eq:one_over_density}), we find
$$
\rho_t  + \int_0^t \left(\pounds_{dx_s} \rho_s + \operatorname{div}_{\mu_g}dx_s\right)=\rho_0.
$$
Since $\rho$ is advected,  we obtain for all $t\in [0,T]$, 
$$
\int_0^t \operatorname{div}_{\mu_g}u_s \rmd s + \int_0^t \operatorname{div}_{\mu_g}\xi \rmd \bZ_s =0.
$$
In order to conclude that $\operatorname{div}_{\mu_g}u\equiv 0$, we need to assume either $ \operatorname{div}_{\mu_g}\xi\equiv 0$, or  true roughness of $\bZ$. In the next example, where $u$ is  constrained to be divergence-free,  we do not require additional assumptions to conclude that $u$ is divergence-free since we will impose this directly. 

Substituting $m=u^{\flat}\otimes D$ into the momentum equation and recalling that $D$ is advected,  that the  Lie derivative is a derivation, and that the product rule (Lemma \ref{lem:product_and_chain}) holds, we find
$$
\rmd u_t^{\flat}\otimes  \rho_t\mu_g+\pounds_{dx_t} u^{\flat}_t\otimes  \rho_t\mu_g+= \frac{1}{2}\bd g(u_t,u_t)\otimes   \rho_t\mu_g+ \bd \rmd \sym{\pi}_t\otimes   \rho_t\mu_g+  \bd \rho_t\otimes   \rmd \sym{\pi}_t \mu_g.
$$
Assuming $\rho$ is non-vanishing and applying the diffeomorphism $\frac{1}{D}$ (see \ref{eq:one_over_density}) yields 
\begin{equation}\label{eq:euler_lagrange_mult}
\rmd u^{\flat}_t+\pounds_{dx_t} u^{\flat}_t\overset{\Omega^1}{=}\frac{1}{2}\bd g(u_t,u_t) + \bd \rmd \sym{\pi}_t +\frac{1}{\rho_t} \bd \rho_t   \rmd \sym{\pi}_t= \frac{1}{2}\bd g(u_t,u_t)  + \frac{1}{\rho_t} \bd (\rho_t \rmd \sym{\pi}_t )= \frac{1}{2}\bd g(u_t,u_t) -\frac{1}{\rho_t} \bd \rmd \bp_t,
\end{equation}
in which the pressure is identified in terms of the Lagrange multiplier $\rmd\sym{\pi}_t$ as $\rmd\bp_t:=-\rho_t\rmd\sym{\pi}_t$. We will elaborate more on this equation in the following example.

\subsection{Rough incompressible Euler equation via constraint on spaces}
\label{sec:Euler-spaces}
Let $(M,g)$  and $\mu_g$ be as in the previous example.  Let  $A=\Lambda^dT^*M$ and $A^{\vee}=\Lambda^0T^*M$ in this example, and notice that for all $D\in \mfk{A}=\operatorname{Dens}_{\clF^3}$, there exists $\rho \in \Omega^0_{\clF^3}$ such that $D=\rho \mu_g$. 

 Let $\mfk{X}_{\mu_g}=\mfk{X}_{\mu_g,\clF_1}$ denote the space of incompressible vector fields and $\mfk{X}_{\mu_g}^{\vee}=\mfk{X}_{\mu_g,\clF_2}$ denote the dual space of one-form densities modulo the kernel of the divergence-free projection as defined in Definition \ref{def:canonical_dual_incompressible}. Denote by $\langle \cdot,\cdot \rangle_{\mfk{X}_{\mu_g}}:\mfk{X}_{\mu_g}^{\vee}\times \mfk{X}_{\mu_g}\rightarrow\bbR$ the canonical pairing defined in \eqref{eq:incomp_pairing} in Definition \ref{def:canonical_dual_incompressible}. Define the Lagrangian  $\ell: \mfk{X}_{\mu_g}\times \mfk{A}\rightarrow \bbR$ by
$$
\ell(u,D)=\frac{1}{2}\int_{M}\rho g(u,u)\mu_g=\frac{1}{2}\int_{M} g(u,u)D=\frac{1}{2}\langle u^{\flat}\otimes D,u\rangle_{\mfk{X}}=\frac{1}{2}\langle [u^{\flat}\otimes D],u\rangle_{\mfk{X}_{\mu_g}},
$$
The square brackets  denote an equivalence class, where we have modded out by elements of the form $\bd f \otimes \mu_g$. It follows that for all $(u,D)\in \mfk{X}_{\mu_g}\times \mfk{A}$ ,
$$
m=\frac{\sym{\delta} \ell}{\sym{\delta} u}(u,D)= [u^{\flat}\otimes D]\in \mfk{X}_{\mu_g}^{\vee},\quad u=\frac{\sharp}{D}m,
\quad \textnormal{and} \quad \frac{\sym{\delta} \ell}{\sym{\delta} D}(u,D)=\frac{1}{2}g(u,u),
$$
where the diffeomorphism $\frac{\sharp}{D}$ is defined in Section \ref{sec:duals_of_bundles}.
Using the diamond operation computed in the previous example, we find
$$
\frac{\sym{\delta} \ell}{\sym{\delta} D}(u,D)\diamond D=\left[\frac{1}{2}\bd g(u,u)\otimes D\right]\in  \mfk{X}^{\vee}_{\mu_g}.$$ 
Clebsch critical points (Theorem \ref{thm:Clebsch}) thus satisfy
\begin{equation*}
\begin{aligned}
\rmd   m_t&+  \pounds_{u_t}  m_t \rmd t +   \pounds_{\xi}  m_t \rmd \bZ_t \overset{\mfk{X}^{\vee}_{\mu_g}}{=} [\frac{1}{2}\bd g(u_t,u_t)\otimes D_t] \rmd t,\quad m = [u^{\flat}\otimes D], \\
\rmd D_t &+ \pounds_{u_t}D_t\rmd t + \pounds_{\xi} D_t \rmd \bZ_t  \overset{\mfk{A}}{=} 0,\\
\rmd \lambda_t &\overset{\mfk{A}^{\vee}}{=}\left(\pounds_{u_t}\lambda_t+\frac{1}{2}g(u_t,u_t)\right)\rmd t + \pounds_{\xi} \lambda_t \rmd \bZ_t.
\end{aligned}
\end{equation*}
Critical points of the Hamilton-Pontryagin action functional (Theorem \ref{thm:HamPont}) also satisfy the first two equations. 
Since $D$ is Lie-advected and the Lie derivative is a derivation, using the product rule (Lemma \ref{lem:product_and_chain}), we find
$$
[\rmd u^{\flat}_t\otimes D_t] +   [ \pounds_{u_t} u^{\flat}_t\otimes D_t] \rmd t +    [\pounds_{\xi} u^{\flat}_t\otimes D_t] \rmd \bZ_t \overset{\mfk{X}^{\vee}_{\mu_g}}{=} [\frac{1}{2}\bd g(u_t,u_t)\otimes D_t] \rmd t,
$$
or equivalently 
$$
\rmd  u^{\flat}_t\otimes D_t+   P(\pounds_{u_t} u^{\flat}_t\otimes D_t)\rmd t +    P(\pounds_{\xi} u^{\flat}_t\otimes D_t) \rmd \bZ_t \overset{\mfk{X}^{\vee}_{\mu_g}}{=} P(\frac{1}{2}\bd g(u_t,u_t)\otimes D_t)\rmd t,
$$
Upon invoking the definition of   $\mfk{X}^{\vee}_{\mu_g}$ in Definition \ref{def:canonical_dual_incompressible}, we find 
$$
\rmd u^{\flat}_t\otimes D_t +   \pounds_{u_t} u^{\flat}_t\otimes D_t\rmd t +    \pounds_{\xi} u^{\flat}_t\otimes D_t\rmd \bZ_t \overset{\mfk{X}^{\vee}}{=} \frac{1}{2}\bd g(u_t,u_t)\otimes D_t \rmd t-\bd p \otimes \mu_g\rmd t-\bd q\otimes \mu_g \rmd \bZ_t,
$$
where $\bd p\in C_T^{\alpha}(\Omega^0)$ and $\bd q\in \bclD_{Z,T}((\Omega^0)^K)$.

Applying $\frac{1}{D}$ (as defined in \ref{eq:one_over_density}) and  recalling that $\operatorname{div}_{\mu_g}u_t\equiv 0$ yields
\begin{equation*}
\begin{aligned}
\rmd u^{\flat}_t &+  \pounds_{u_t}  u^{\flat}_t\rmd t +   \pounds_{\xi}  u^{\flat}_t\rmd \bZ_t\ \overset{\Omega^1}{=}\frac{1}{2}\bd g(u_t,u_t)\rmd t  -\frac{1}{\rho_t}\bd p_t \rmd t -\frac{1}{\rho_t}\bd q_t \rmd \bZ_t, \quad \bd^* u^{\flat}=0=\operatorname{div}_{\mu_g}u_t,\\
\rmd \rho_t &+ \pounds_{u_t}\rho_t\rmd t + (\pounds_{\xi} \rho_t + \operatorname{div}_{\mu_g}\xi)\rmd \bZ_t  \overset{\Omega^0}{=} 0.
\end{aligned}
\end{equation*}

\begin{remark}
Thus, one sees that the Hodge decomposition necessitates introducing a `rough' Lagrangian multiplier (i.e., pressure term) $\rmd \bp_t=-\rho_t \rmd \sym{\pi}_t$ in \eqref{eq:euler_lagrange_mult}.  That is,
$$
\rmd \bp_t= \bd p \otimes \mu_g\rmd t-\bd q\otimes \mu_g \rmd \bZ_t,
$$
where
$$
\bd p_t= -Q\left(\rho_t \left(\pounds_{u_t}u_t^{\flat} - \frac{1}{2}\bd g(u_t,u_t)\right)\right) \quad \textnormal{and} \quad \bd q_t= -Q\left(\rho_t \pounds_{\xi}u_t^{\flat} \right),
$$
and  $Q: \Omega^1\rightarrow \bd \Omega^0$ denotes the  projection \eqref{eq:Hodge_decomp_k} onto flat one-forms.
\end{remark}

The following identity is well-known
$$
\pounds_{v}v^{\flat}-\frac{1}{2}\bd g(v,v)=(\nabla_{v}v)^{\flat}, \quad \forall v\in \mfk{X}_{C^{\infty}}, 
$$
where $\nabla: \mfk{X}_{C^{\infty}}\times \mfk{X}_{C^{\infty}}\rightarrow \mfk{X}_{C^{\infty}}$ is Levi-Civita connection (see, e.g., \cite{dupre1978classical}[Section 3]).\footnote{For the convenience of the reader, we  repeat the proof. For a given $u\in \mfk{X}_{C^\infty}$, define the tensor derivation $A_u=\pounds_{u}-\nabla_u$.  It follows that $A_uf\equiv 0$ for all $f\in C^\infty$ and that $A_uv=-\nabla_v u$ by the torsion-free property of the connection. Using these properties and that $A_u$ is a derivation, for a given $\alpha\in \Omega^1_{C^\infty}$, we have 
$
\bi_{v}(A_u\alpha)=\bi_{\nabla_vu}\alpha,
$
and hence $\bi_w(A_uv^{\flat})=\bi_{\nabla_wu}v^{\flat}=g(v,\nabla_wu) $ for all $w\in \mfk{X}_{C^\infty}$. 
Therefore, 
$$
 \bi_{w}(\bd g(u,v))=\nabla_w [g(u,v)]=g(v,\nabla_w u) + g(u,\nabla_w v)=\bi_{w}(A_uv^{\flat}+A_vu^{\flat}), \quad \forall u,v,w\in\mfk{X}_{C^\infty}.
$$
where we have also used $\nabla \eta=0$. Thus, 
$
\pounds_u v^{\flat} -\nabla_uv^{\flat} + \pounds_{v}u^{\flat}-\nabla_vu^{\flat}=\bd g(u,v),
$
which gives the formula upon setting $u= v$.
}
Thus, applying the $\sharp$  operator  to the equation for $u^{\flat}$ yields 
$$
\rmd u_t +  \nabla_{u_t} u_t\rmd t +   \left(\pounds_{\xi}  u^{\flat}_t\right)^{\sharp}\rmd \bZ_t= \frac{1}{\rho_t}\nabla  p_t \rmd t + \frac{1}{\rho_t}\nabla q_t \rmd \bZ_t,
$$
where in a local coordinate chart (see \eqref{eq:Lie_derivative_local}),
$$
(\pounds_{\xi}u^{\flat})^{\sharp}=\left(\xi^j\partial_{x^j}u^k+g^{ik}\xi^ju^l\partial_{x^j}g_{li}+g^{ik}g_{lj}u^l\partial_{x^i}\xi^j\right)\partial_{x^k}.
$$
It is worth noting that for all $u\in \mfk{X}_{\mu_g,C^{\infty}}$ and  $v,w\in \mfk{X}_{C^{\infty}}$, 
$$
(w,\operatorname{ad}_uv)_{\mfk{X}_{L^2}}=\langle w^{\flat}\otimes \mu_g,\operatorname{ad}_uv\rangle_{\mfk{X}}=\langle \pounds_{u}(w^{\flat}\otimes \mu_g),v\rangle_{\mfk{X}}=\langle \pounds_{u}w^{\flat}\otimes \mu_g,v\rangle_{\mfk{X}}=( (\pounds_{u}w^{\flat})^{\sharp},v)_{\mfk{X}_{L^2}}.
$$
\begin{remark}
When  $\xi\equiv 0$, the corresponding equation is the usual deterministic incompressible non-homogeneous Euler fluid equation (see, e.g.,\cite{boyer2012mathematical}[Ch.\ VI] or  \cite{marsden1976well}).
 \end{remark}
 
In case of a homogeneous fluid $\rho\equiv 1$, we find
$$
\rmd u^{\flat}_t +  \pounds_{u_t}  u^{\flat}_t\rmd t +   \pounds_{\xi}  u^{\flat}_t\rmd \bZ_t =\frac{1}{2}\bd g(u_t,u_t)\rmd t  -\bd p_t \rmd t -\bd q_t \rmd \bZ_t,
$$
or equivalently,
$$
\rmd u_t +  \nabla_{u_t} u_t\rmd t +   \left(\pounds_{\xi}  u^{\flat}_t\right)^{\sharp}\rmd \bZ_t= -\nabla  p_t \rmd t -\nabla q_t \rmd \bZ_t.
$$
In this case, another advected quantity $a \in \oplus_{i=1}^d \Lambda^0T^*M$  and its Lagrange multiplier $\lambda \in  \oplus_{i=1}^d \Lambda^dT^*M$ should be introduced into the Clebsch constraint to avoid reduction to  potential flow (see, e.g.,  Section \ref{sec:var_smooth_paths}).

% In \cite{hofmanova2019rough}, two of the authors proved the existence of a local strong solution $u\in C_T^{\alpha}(\mfk{X}_{L^2}(\bbT^d))\cap C_T(\mfk{X}_{W_2^1}(\bbT^d))\cap L^2_T(\mfk{X}_{W_2^2}(\bbT^d))$, $d=2,3$, of 
%$$
%\rmd u_t^i +  (u_t\cdot \nabla) u_t^i\rmd t +   \left((\xi \cdot \nabla) u_t^i+u_t \cdot \partial_{x^i}\xi \right)\rmd \bZ_t=\nu \Delta u^i -\partial_{x^i}  p_t \rmd t -\partial_{x^i}  q_t \rmd \bZ_t, \quad \nu>0,
%$$
%where $\bbT^d$ is the flat torus with the standard  coordinate system.
%In 2D, it was shown that there exists a global unique solution. In a subsequent work, we will investigate the inviscid case $\nu= 0$.

\paragraph{Vorticity dynamics}
We will now discuss vorticity dynamics in both the inhomogeneous and homogeneous case. We assume $\operatorname{div}_{\mu_g}\xi\equiv 0$ and that  all quantities are regular enough subsequently to perform each calculation.  First,  notice that   for every $f\in C^\infty$, 
$$
\rmd f(\rho_t) + \pounds_{u_t}f(\rho_t)\rmd t+  \pounds_{\xi}f(\rho_t) \rmd \bZ_t=0.
$$
That is, $f(\rho_t)$ is advected by $\rmd x_t$ (see Remark \ref{rem:RPDE_Inverse}).
Let $\omega=\bd u^{\flat}\in \Omega^2$ be the vorticity two-form.  Since  the exterior derivative $\bd$ commutes with the Lie derivative, we obtain
$$
\rmd \omega_t +  \pounds_{u_t}  \omega _t\rmd t +   \pounds_{\xi}  \omega_t\rmd \bZ_t \overset{\Omega^2}{=}-\bd \rho_t^{-1}\wedge \bd p_t \rmd t-\bd \rho_t^{-1}\wedge \bd q_t \rmd \bZ_t \quad \textnormal{and} \quad 
\rmd \bd f(\rho_t) + \pounds_{u_t}(\bd f(\rho_t))\rmd t + \pounds_{\xi} (\bd f(\rho_t) )\rmd \bZ_t  \overset{\Omega^1}{=} 0.
$$
Thus, by the produt rule (Lemma \ref{lem:product_and_chain}), we get
$$
\rmd (\omega_t\wedge \bd f(\rho_t)) + \pounds_{u_t}( (\omega_t\wedge \bd f(\rho_t)) \rmd t + \pounds_{\xi}( (\omega_t\wedge \bd f(\rho_t)) \rmd \bZ_t=- \bd \rho_t^{-1}\wedge \bd p_t\wedge \bd f(\rho_t) \rmd t- \bd \rho_t^{-1}\wedge \bd q_t \wedge \bd f(\rho_t)\rmd \bZ_t.
$$
In particular, in dimension three, using that $\bd \rho_t\wedge \bd p_t \wedge \bd \rho_t\equiv  \bd \rho_t\wedge \bd q_t \wedge \bd \rho_t\equiv 0$, we find  
\begin{align*}
\rmd (\omega_t \wedge \bd \rho_t)+\pounds_{u_t}  (\omega _t\wedge \bd \rho_t)\rmd t +   \pounds_{\xi}  (\omega_t\wedge \bd \rho_t)\rmd \bZ_t\overset{\Omega^3}{=}0.
\end{align*}
Moreover, in dimension three, applying Stokes theorem, we get 
$$
\int_{M} \omega_t \wedge \bd f(\rho_t)=\int_{M} \omega_0 \wedge \bd f(\rho_0).
$$
We also have that 
$$
\rmd ( \omega_t f(\rho_t))+ \pounds_{u_t}(\omega_t f(\rho_t) \rmd t+ \pounds_{\xi} (\omega)t f(\rho_t)) \rmd \bZ_t = - \bd G(\rho_t)\wedge \bd p_t \rmd t- \bd G(\rho_t)\wedge \bd q_t \rmd \bZ_t,
$$
where $G$ is the anti-derivative of $g(\rho)=\frac{f(\rho)}{\rho^2}$.  Thus,  in dimension two,  we obtain
$$
\int_{M} \omega_t f(\rho_t) = \int_{M}\omega_0 f(\rho_0).
$$

In the homogeneous setting, the vorticity equation is given by
$$
\rmd \omega_t +  \pounds_{u_t}  \omega_t\rmd t +   \pounds_{\xi}  \omega_t\rmd \bZ_t =0.
$$
Using the Hodge-star operator $\star: \Omega^2 \rightarrow \Omega^{d-2}$ and setting $\tilde{\omega}=\star\omega \in \Omega^0$ in dimension two and $\tilde{\omega}=\sharp \star \omega\in \dot{\mfk{X}}_{\mu_g}$  in dimension three, we find 
$$
\partial_t \tilde{\omega}_t +\left((u_t\cdot \nabla)\tilde{\omega}_t -\mathbf{1}_{d=3}(\tilde{\omega}_t\cdot \nabla)u_t\right)\rmd t + \left((\xi\cdot \nabla)\tilde{\omega}_t -\mathbf{1}_{d=3}(\tilde{\omega}_t\cdot \nabla)\xi\right)\rmd t= 0.
$$
Here, we have used that $\sharp \star$ and the Lie derivative commute (see, e.g., Section A.6 of \cite{ besse2017geometric}.)  

In three dimensions, the helicity,  defined as $$\Lambda(\tilde{\omega}_t) =\int_{M} u^{\flat}_t\wedge \omega_t$$
measures the linkage of field lines of the divergence-free vector field $\tilde{\omega}$ \cite{arnold1999topological}. It follows that 
$$
\rmd  (u^{\flat}\wedge \omega) + \pounds_{u_t}(u^{\flat}\wedge \omega)\rmd t +  \pounds_{\xi}(u^{\flat}\wedge \omega)\rmd \bZ_t =-\bd \tilde{p}_t\wedge \omega_t\rmd t - \bd q \wedge \omega_t \rmd \bZ_t,
$$
where $\tilde{p}=p - \frac{1}{2}\bd g(u,u)$.  Thus, we find
$$
\Lambda(\tilde{\omega}_t)= \int_{M} u^{\flat}_t\wedge \omega_t =\int_{M} u^{\flat}_0\wedge \omega_0=\Lambda(\tilde{\omega}_0).
$$
Therefore,  the linkage number of the vorticity vector field $\Lambda(\tilde{\omega})$ is preserved by the 3D Euler fluid equations.

 In dimension two,  for any smooth  $f\in C^\infty$, 
$$
\rmd  f(\tilde{\omega}_t) + \pounds_{u_t}f(\tilde{\omega}_t)\rmd t +\pounds_{\xi}f(\tilde{\omega}_t)\rmd \bZ_t =0,
$$
and hence
$$
\int_{M} f(\tilde{\omega}_t) \mu_g=\int_{M}f(\tilde{\omega}_0)\mu_g.
$$
Letting $f(x)=x^2$, we obtain
$$
\int_{M} \tilde{\omega}_t^2 \mu_g=\int_{M}\tilde{\omega}_0^2\mu_g,
$$
which implies that in dimension two enstrophy is conserved.

\paragraph{Divergence and harmonic-free} It is worth noting that in the homogeneous setting, $u$ can only be recovered directly from $\omega$ if  $\clH^1_{\Delta}=\emptyset$ via the Biot-Savart law (see Section \ref{sec:Hodge}). Otherwise one needs to keep track of the harmonic constant. Nevertheless, one can  repeat the above analysis with the harmonic and divergence-free spaces $\dot{\mfk{X}}_{\mu_g}$ and $\dot{\mfk{X}}^{\vee}_{\mu_g}$ (see Definition \ref{def:canonical_dual_incompressible}) to derive 
$$
\rmd u^{\flat}_t +  \dot{P}\pounds_{u_t}  u^{\flat}_t\rmd t +   \dot{P}\pounds_{\xi}  u^{\flat}_t\rmd \bZ_t =\frac{1}{2}\dot{P}\bd g(u_t,u_t)\rmd t,
$$
and hence
$$
\rmd u^{\flat}_t +  \pounds_{u_t}  u^{\flat}_t\rmd t +   \pounds_{\xi}  u^{\flat}_t\rmd \bZ_t =\frac{1}{2}\bd g(u_t,u_t)\rmd t+(c_t -\bd p_t) \rmd t +(\tilde{c}_t-\bd q_t) \rmd \bZ_t,
$$
where  $c=H(\pounds_{u}  u^{\flat})\in C^\alpha_T(\clH^1_{\Delta})$  and    $\tilde{c}=H(\pounds_{\xi}  u^{\flat})\in \bclD_{Z,T}((\clH^1_{\Delta})^K)$ and $H$ is the projection onto Harmonic one-forms. Here, $u$ is constrained to be both divergence free and harmonic free. For this equation, $u^{\flat}$ (and $u$) can be recovered directly from $\omega$ via the Biot-Savart operator. This equation has been studied  in \cite{crisan2019solution, brzezniak2016existence, brzezniak2019existence, crisan2019well}. See, also, the discussion in \cite{flandoli2019high}.

\subsection{Rough Camassa-Holm equation and Burgers equation}
Let $M=\bbS$ be the flat one-dimensional  torus (i.e., the circle). Denote the standard normalized volume form by $\mu\in \operatorname{Dens}_{\,C^\infty}$  and coordinates by $x$. In this example, we take $A=\emptyset=A^{\vee}$. We  define  the Lagrangian  $\ell: \mfk{X}\rightarrow \bbR$  by
\begin{align*}
\ell(u)=\frac{1}{2}\int_{\bbS} (|u|^2+\alpha^2|\nabla_x u|^2)\mu =\frac{1}{2}\langle (\Lambda^2 u)^{\flat}\otimes \mu ,u\rangle_{\mfk{X}}, \quad\hbox{where}\quad \Lambda^{2}:=1-\alpha^2\nabla_x^2.
\end{align*}
It follows that 
 $$m=\frac{\sym{\delta} \ell}{\sym{\delta} u}(u)=(\Lambda^{2}u)^{\flat}\otimes \mu \in \mfk{X}^{\vee} \quad\hbox{and}\quad  u=\Lambda^{-2}\left(\frac{\sharp}{\mu} m\right).$$
If $(u,\eta)$ is a critical point of the Hamilton-Pontryagin action functional (Theorem \ref{thm:HamPont}), then 
 $$
 \rmd m_t + \pounds_{u_t}m_t \rmd t + \pounds_{\xi}m_t\rmd \bZ_t=0,
 $$
which we may interpret as Lie transport of the momentum in the Camassa-Holm equation along rough paths.
 
Since the  Lie derivative is a derivation and we have the explicit formula \eqref{eq:Lie_derivative_local} for one-forms, we find 
$$
\pounds_{v}m =\left(\pounds_{v}(\Lambda^{2} u)^{\flat} +  (\Lambda^{2} u)^{\flat}\operatorname{div}_{\mu} v\right)\otimes \mu=\left(v\nabla_x(\Lambda^{2} u) + 2 (\Lambda^{2} u) \nabla_x v\right)^{\flat}\otimes \mu,\; \; \forall v\in \mfk{X}.
$$
Thus, identifying $u$ with a scalar-valued function, we get
 $$
\rmd u_t + \Lambda^{-2}\left(u_t\partial_x(\Lambda^2u_t)+2(\Lambda^2u_t)\partial_xu_t\right) \rmd t + \Lambda^{-2}\left(\xi \partial_x(\Lambda^2u_t)+2(\Lambda^2u_t)\partial_x\xi\right) \rmd \bZ_t=0.
$$
After some simplification, we find 
$$
\rmd u_t + \left(u_t\partial_x u_t + \partial_x\Lambda^{-2}\bigg(\Big( |u_t|^2+\frac{\alpha^2}{2} |\partial_x u_t|^2\Big)\bigg) \right)\rmd t +\bigg(\xi \partial_x u_t +  \Lambda^{-2}\left(2 u_t \partial_x \xi + \alpha^2\partial_{x}^2\xi \partial_x u_t\right)\bigg) \rmd \bZ_t=0,
$$
written as a nonlocal Cauchy problem with the pseudo-differential operator $\Lambda^{-2}$. Indeed, notice that if one substitutes $\xi$ with  $u$ in the $\rmd\bZ_t$-term, then one obtains the same  operator in $\rmd t$-term. 

Now, if  $\alpha=0$, we obtain the Burgers equation on rough paths,
$$
\rmd u_t + 3u_t\partial_xu_t \rmd t + \left(\xi \partial_xu_t+2u_t\partial_x\xi\right) \rmd \bZ_t=0.
$$
The properties of the stochastic Burgers equation with stochastic transport noise have been investigated, e.g., in \cite{alonso2019burgers}, and the properties of the Burgers equation with rough transport noise have been studied in \cite{hocquet2019generalized}.

\subsection{Rough Euler  equations for adiabatic compressible flows}\label{sec.EulAdiabatCompFuid}

Let $A=\Lambda^dT^*M\oplus \Lambda^0T^*M$ and $A^{\vee}= \Lambda^0T^*M\oplus \Lambda^dT^*M$.   Denote the advected variables by $\ba = (\bD, \bs)\in \mfk{A}_{\clF_3}=\operatorname{Dens}_{\clF_3}\oplus \Omega^0_{\clF_3}$ and the associated Lagrangian multipliers by $\sym{\lambda}=(\sym{f},\sym{\beta})\in \mfk{A}_{\clF_4}^{\vee}=\Omega^0_{\clF_4}\oplus \operatorname{Dens}_{\clF_4}$.  Let $\rho \in \Omega^0_{\clF_3}$ be such that $D=\rho \mu_g$.  The advected variables comprise the thermodynamic evolution variables mass/volume, $\rho$  and the entropy/mass, $s$.
The internal energy/mass, $e(\rho, s)$, obeys the First Law of Thermodynamics, given by
$$
de(\rho,s) = \frac{p}{\rho^2}d\rho + T ds,
$$
with pressure $p(\rho, s)$ and temperature $T(\rho, s)$.

Define the Lagrangian  $\ell: \mfk{X}_{\mu_g}\times \mfk{A}\rightarrow \bbR$ by
$$
\ell(u, a) = \int_{M} \left( \frac{1}{2} g(u,u)-e(\rho,s) \right)D.
$$
It follows that  
$$
m=\frac{\sym{\delta} \ell}{\sym{\delta} u}(u,a)= u^{\flat}\otimes D\in \mfk{X}^{\vee} \quad \textnormal{and} \quad \frac{\sym{\delta} \ell}{\sym{\delta} a}(u,a)=\left(\frac{1}{2}g(u,u)-h(p,s),\;- TD\right),
$$
where  $h(p,s)= e(\rho,s)+ p/\rho$ is the specific enthalpy/mass, which satisfies
$$
dh(p,s) = \frac{1}{\rho}dp + Tds.
$$
Applying the calculations with the diamond operation $(\diamond)$ in Section \ref{sec:Euler-Lag_mult} yields
$$
\frac{\sym{\delta} \ell}{\sym{\delta} a}(u,a)\diamond a=\left(\left(\frac{1}{2}\bd g(u,u)-\bd h(p,s)\right)\otimes D,\;T\bd s\otimes D\right).
$$

Critical points of the Clebsch and Hamilton-Pontryagin action functionals  satisfy
\begin{equation*}
\begin{aligned}
\rmd   m_t+  \pounds_{dx_t}  m_t \rmd t  \overset{\mfk{X}^{\vee}}{=} \left(\frac{1}{2}\bd g(u_t,u_t) - \bd h(p_t,s_t)\right)\otimes D_t\rmd t &+ T_t\bd s_t \otimes  D_t \rmd t,  \\
\rmd D_t + \pounds_{dx_t}D_t\rmd t  \overset{\operatorname{Dens}}{=} 0,\quad 
\& \quad 
\rmd s_t &+ \pounds_{dx_t}s_t  \overset{\operatorname{\Omega^0}}{=} 0.
\end{aligned}
\end{equation*}
Since $D$ is Lie-advected and the Lie derivative is a derivation, using the product rule (Lemma \ref{lem:product_and_chain}) and applying the diffeomorphism $\frac{1}{D}$ yields
$$
\rmd u_t^{\flat}+ \pounds_{{\rm d}x_t} u_t^\flat =\left( \bd \,\bigg(  \frac{1}{2}g(u_t,u_t)  -\,h(p_t,s_t) \bigg) + T_t\bd s_t\right)\rmd t
=:\left( \frac{1}{2}\bd g(u_t,u_t) -\, \frac{1}{\rho_t}\bd p_t \right)\rmd t
\,.
$$
Restricting to dimension three and working with enough regularity to perform the subsequent calculations implies three advected quantities,
\begin{align}\label{eq:3AdEul-advect}
(\rmd + \pounds_{{\rm d}x_t})D_t= 0
\,,\qquad (\rmd + \pounds_{{\rm d}x_t})s_t = 0
\,,\qquad
(\rmd + \pounds_{{\rm d}x_t})( \bd u_t^\flat\wedge \bd s_t ) \overset{\Omega^3}{=} 0\,.
\end{align}

Let $\omega= \bd u^{\flat}$ denote the vorticity two-form and let $\tilde{\omega}$ denote the corresponding divergence-free vector field. From the three quantities in \eqref{eq:3AdEul-advect}, one may construct the following advected scalar quantity known as the \emph{potential vorticity}
$$
(\rmd + \pounds_{{\rm d}x_t})\Omega_t\overset{\Omega^0}{ =} 0,
\quad\hbox{where}\quad
\Omega_t :=D_t^{-1} \sym{\omega}_t\wedge \bd s_t = \rho_t^{-1}\tilde{\omega}\cdot\nabla s_t.
$$
Consequently, the following functional is conserved for the adiabatic compressible Euler equations on GRPs
$$
C_\Phi := \int_M \Phi(\Omega_t, s_t)D,
$$
for any smooth function $\Phi: \bbR^2\rightarrow \bbR$.

\section{Proof of main results}\label{sec:proofs}

\subsection{Proof of the Clebsch variational principle Theorem \ref{thm:Clebsch}}\label{proof:Clebsch}
\begin{proof}
It is worth noting that this proof closely mirrors the proof in \cite{holm2015variational} for stochastic variational principles. Nonetheless, we repeat the proof for the convenience of the reader.

If   $(u,a,\lambda )\in \mathit{Clb}_{\bZ}$ is a critical point of the action functional, 
then it satisfies
$$
0=\frac{d}{d\epsilon}\bigg|_{\epsilon=0}S^{\mathit{Clb}_{\bZ}}(u^{\epsilon},\ba^{\epsilon},\boldsymbol{\lambda}^{\epsilon})=I(\sym{\delta} u)+II(\sym{\delta}a)+III(\sym{\delta}\lambda),
$$
where 
\begin{align*}
I(\sym{\delta} u)&=\int_0^T \left\langle \frac{ \sym{\delta} \ell}{\sym{\delta} u}(u_t,a_t) -  \lambda_t \diamond a_t, \sym{\delta} u_t \right\rangle_{\mathfrak{X}} \rmd t\\
II(\sym{\delta}a)&=\int_0^T \langle \lambda_t,  \partial_t \sym{\delta} a_t\rangle_{\mfk{A}}\rmd t+ \int_0^T \langle \frac{\sym{\delta} \ell}{\sym{\delta}a}(u_t,a_t)+\pounds_{u_t}^T \lambda_t,\sym{\delta}a_t\rangle_{\mfk{A}}\rmd t +   \int_0^T \langle \pounds_{\xi}^*\lambda_t,  \sym{\delta} a_t \rangle_{\mfk{A}}  \rmd \bZ_t\\
III(\sym{\delta}\lambda)&=\int_0^T \langle \sym{\delta} \lambda _t, \rmd \ba_t \rangle_{\mfk{A}}+ \int_0^T\langle \sym{\delta}\lambda_t, \pounds_{u_t}a_t\rangle_{\mfk{A}}\rmd t + \int_0^T\langle \sym{\delta}\lambda_t, \pounds_{\xi} a_t  \rangle_{\mfk{A}}\rmd \bZ_t.
\end{align*}
Here, we have used the definition of the diamond operator 
and \eqref{asm:Lagrangian} to exchange the order of derivative in $\epsilon$ and the time-integral for the Lagrangian terms. Since we may always take $\sym{\delta} u\equiv 0$, $\sym{\delta} a\equiv 0$, and $\sym{\delta}\lambda\equiv 0$, we  conclude that $I(\sym{\delta} u)=0$, $II(\sym{\delta} a)=0$, and $III(\sym{\delta} \lambda)=0$ for all smooth $(\sym{\delta} u, \sym{\delta} \lambda, \sym{\delta} a)$ that vanish at $t=0$ and $t=T$. Splitting the variations in time and space and applying the fundamental lemma of calculus of variations in Lemmas \ref{lem:fund_calc_var}, and \ref{lem:product_and_chain}, we find $m=\frac{\sym{\delta} \ell}{\sym{\delta} u}(u,a)\overset{\mfk{X}^{\vee}}{=} \lambda \diamond a$ and that $a,\lambda$ solve the equations given in \eqref{eq:EPeq}.
Upon applying Lemma \ref{lem:product_and_chain} with the continuous bilinear pairing $\diamond: \mfk{X}\times \mfk{A}\rightarrow \mfk{X}^{\vee}$ , we obtain
$$
m_t= \lambda_0 \diamond a_0 + \int_0^t \rmd \lambda_r\diamond a_r + \int_0^t \lambda_r \diamond \rmd a_r.
$$

Subtracting $\int_0^t\frac{\sym{\delta} \ell}{\sym{\delta} a}(u_r,a_r)\rmd r$ from both sides of the above, testing against a smooth $\phi \in \mfk{X}_{C^{\infty}}$ and working in differential notation yields 
\begin{align*}
\langle \rmd m &- \frac{\sym{\delta} \ell}{\sym{\delta} a} \diamond a \rmd t , \phi \rangle_{\mathfrak{X}}    = \langle  (\rmd\lambda- \frac{\sym{\delta} \ell}{\sym{\delta} a} \rmd t)\diamond a +\lambda \diamond \rmd a   , \phi \rangle_{\mathfrak{X}}\smallskip 
\\&=  \langle  \pounds_{u}^* \lambda   \diamond a , \phi \rangle_{\mathfrak{X}}\rmd t + \langle \pounds_{\xi}^* \lambda    \diamond a , \phi \rangle_{\mathfrak{X}} \rmd \bZ - \langle \lambda \diamond  \pounds_{u}a , \phi \rangle_{\mathfrak{X}} \rmd t - \langle \lambda \diamond \pounds_{\xi} a  , \phi \rangle_{\mathfrak{X}} \rmd \bZ\;\; (\textnormal{by eqn.\ for } \lambda \;\;\&\;\;a) \smallskip
\\&=  - \langle   \pounds_{u}^*\lambda  ,  \pounds_{\phi}a   \rangle_{\mfk{A}}\rmd t - \langle \pounds_{\xi}^* \lambda , \pounds_{\phi} a \rangle_{\mfk{A}} \rmd \bZ + \langle \lambda, \pounds_{\phi} \pounds_{u}a \rangle_{\mfk{A}} \rmd t + \langle \lambda , \pounds_{\phi} \pounds_{\xi} a \rangle_{\mfk{A}} \rmd \bZ\;\; \quad(\textnormal{by def.\ of } \diamond) \smallskip
\\&=  - \langle    \lambda  ,  (\pounds_{u} \pounds_{\phi} -  \pounds_{\phi} \pounds_{u})a   \rangle_{\mfk{A}} \rmd t - \langle    \lambda  ,  (\pounds_{\xi} \pounds_{\phi}   -  \pounds_{\phi} \pounds_{\xi} )a    \rangle_{\mfk{A}} \rmd \bZ   
\smallskip
\\&=  - \langle    \lambda  ,  \pounds_{[u,\phi]}a   \rangle_{\mfk{A}} \rmd t - \langle    \lambda  ,  \pounds_{[\xi,\phi]}a    \rangle_{\mfk{A}} \rmd \bZ  \;\;\quad(\textnormal{by prop.\ of Lie derivative}) 
\smallskip
\\&=   \langle    \lambda \diamond a ,  [u, \phi]    \rangle_{\mathfrak{X}} \rmd t + \langle    \lambda \diamond a   ,  [\xi,\phi]   \rangle_{\mathfrak{X}} \rmd \bZ \;\;\quad(\textnormal{by def.\ of } \diamond) 
\smallskip
\\&= - \langle    \lambda \diamond a ,  \operatorname{ad}_{u}\phi     \rangle_{\mathfrak{X}} \rmd t - \langle    \lambda \diamond a   ,  \operatorname{ad}_{\xi}\phi   \rangle_{\mathfrak{X}} \rmd \bZ\;\;\quad(\textnormal{by def.\ of } \operatorname{ad}_{u})  
\smallskip
\\&=  -  \langle     \pounds_{u} m ,   \phi    \rangle_{\mathfrak{X}} \rmd t - \langle     \pounds_{\xi} m ,   \phi    \rangle_{\mathfrak{X}} \rmd \bZ\;\;\quad(\textnormal{b/c } \operatorname{ad}^*_vm=\pounds_{v}m). 
\end{align*}
Consequently,
$$\rmd m_t +    \pounds_{u_t} m_t \rmd t +   \pounds_{\xi} m_t \rmd \bZ_t \overset{\mathfrak{X}_{\vee}}{=} \frac{\sym{\delta} \ell}{\sym{\delta} a} \diamond a_t \rmd t. $$
The converse can be obtained by reversing the above proof. 
\end{proof}

\subsection{Proof of the rough Lie chain rule  Theorem \ref{thm:Lie_chain} }\label{sec:proof_Lie_chain}
\begin{proof}
We will prove the statement in the following steps. 
\begin{enumerate}
\item We prove the formula for scalar functions by working in a local chart;
\item We  prove the formula for vectors by reducing to step 1 and using the product formula;
\item We  prove the formula for one-forms from steps 1 and  2 and the product formula;
\item Using steps 3 and 4, we apply an induction argument to prove the general formula.
\end{enumerate}

For simplicity, we will drop the $\rmd t$-terms and time-dependence on $\xi$, and  assume $s=0$. 
That is, we consider the $C^{\infty}$-flow $\eta\in C_T^{\alpha}(\operatorname{Diff}_{C^\infty})$ satisfying
$$
d\eta_{t}X=\xi(\eta_{t}X)\rmd \bZ_t, \;\; \eta_0X=X\in M.
$$
Since we are working in $C^{\infty}$, we will simply write $\bclD_{\bZ}$ for the controlled spaces. 

\textbf{Step 1.}
Assume that $f\in C_T^{\alpha}(C^\infty)$ has the decomposition 
$$
f_t = f_0+ \int_0^t\pi_r\rmd \bZ_r, \quad t\in [0,T].
$$
We aim to show that 
\begin{equation}\label{eq:scalar_Lie_chain}
\eta_t^*f_t =f_0+\int_0^t \eta_r^*\left(\pi_r+\xi[f_r]\right)\rmd \bZ_r
\end{equation}
and
$$
\eta_{t*}f_t =f_0+\int_0^t \left(\eta_{r*}\pi_r-\xi[\eta_{r*}f_r]\right)\rmd \bZ_r,
$$
where both integrals are understood in the sense of controlled calculus. We focus only on the pull-back formula \eqref{eq:scalar_Lie_chain} as the equation for the push-forward can be shown in a similar way. 

Towards this end, let us fix a coordinate chart $(U,\phi)$ with coordinates denoted by $x$. Let $\rho := x \circ \eta$, $\Xi^i := \xi [ x^i]$, $F_t = f_t \circ x^{-1}$ and $b(t,\cdot) = \pi_t ( x^{-1}(\cdot))$.  
We will now show that 
\begin{equation} \label{F rho equation}
F_t(\rho_t)  = F_0 + \int_0^t \left( \Xi^i(\rho_r) \partial_{x^i} F_r(\rho_r)  + b(r, \rho_r) \right)\rmd \bZ_r ,
\end{equation}
which is  \eqref{eq:scalar_Lie_chain} written in local coordinates. Since $(\phi,U)$ was arbitrary, proving \eqref{F rho equation} completes step 1.

To see this, it will be convenient to spell out the expansion in terms of scalars. We identify $\Xi(\cdot)$ as an operator on $\mathcal{L}(\bbR^K, \bbR^d)$ acting on $Z$ with $\Xi_k(\cdot) \delta Z^k_{st}$ and write the Davie's expansions of  $\rho$ and $F$:
\begin{equation} \label{rho expansion}
\delta \rho_{st} = \Xi_k (\rho_s) \delta Z^k_{st} +  \partial_{x^i} \Xi_k(\rho_s) \Xi^i_l(\rho_s) \bbZ^{lk}_{st} + \rho_{st}^{\natural}
\end{equation}
and 
\begin{equation} \label{F expansion}
\delta F_{st}(\cdot) = b_k (s,\cdot) \delta Z^k_{st} + b_{k,l}'(s,\cdot)\bbZ^{lk}_{st} + b_{st}^{\natural}( \cdot),
\end{equation}
where $|\rho_{st}^{\natural} | \lesssim |t-s|^{3 \alpha}$ and $|b^{\natural}_{st}|_{C^\infty} \lesssim |t-s|^{3 \alpha}$.

To prove \eqref{F rho equation}, we Taylor expand  $F_s$, $b_k(s,\cdot)$, and $b_{k,l}'(s, \cdot)$, and use \eqref{F expansion} to write
\begin{align*}
F_t(\rho_t) - F_s(\rho_s) & = F_t( \rho_t) - F_s(\rho_t)  + F_s(\rho_t) - F_s(\rho_s) \\
& = b_k(s,\rho_t) \delta Z^k_{st} + b_{k,l}'(s,\rho_t) \bbZ^{lk}_{st} + b_{st}^{\natural}(\rho_t)  + \partial_{x^i} F_s (\rho_s) \delta \rho_{st}^i + \frac12 \partial_{x^j} \partial_{x^i} F_s (\rho_s) \delta \rho_{st}^i \delta \rho_{st}^j + o( | \delta \rho_{st}|^{3}) \\
& = ( b_k(s,\rho_s) + \partial_{x^i} b_k(s,\rho_s) \delta \rho^i_{st} + o(|\delta \rho_{st}|^2)  )  \delta Z^k_{st} + ( b_{k,l}'(s,\rho_s) + o(|\delta \rho_{st}|))  \bbZ^{lk}_{st} + b_{st}^{\natural}(\rho_t) \\
&  + \partial_{x^i} F_s (\rho_s) \Xi^i_k(\rho_s) \delta Z^k_{st} + \partial_{x^i} F_s(\rho_s)  \partial_{x^j} \Xi^i_k(\rho_s)\Xi_l^j(\rho_s) \bbZ^{lk}_{st} +  \partial_{x^i} F_s(\rho_s) \rho_{st}^{i, \natural} \\
& + \frac12 \partial_{x^j} \partial_{x^i} F_s (\rho_s)  \Xi_{l}^i \Xi^j_k \delta Z^k_{st}  \delta Z^l_{st} + o( | \delta \rho_{st}|^{3}).
\end{align*}
 Since  $\bZ$ is geometric (i.e.,  $\bbZ_{st}^{lk} + \bbZ_{st}^{kl} = Z_{st}^l Z_{st}^k$),  plugging in the expansion \eqref{rho expansion}, we find
\begin{align*}
F_t(\rho_t) - F_s(\rho_s) & = \left( b_k(s,\rho_s) + \partial_{x^i} F_s (\rho_s) \Xi_k^i(\rho_s) \right)  \delta Z^k_{st} + \left(  \partial_{x^i} b_k(s,\rho_s) \Xi^i_l(\rho_s) + \partial_{x^i} b_l(s,\rho_s) \Xi^i_k(\rho_s)\right) \bbZ_{st}^{lk}\\
& \quad + \left( \partial_{x^j} \partial_{x^i} F_s (\rho_s)  \Xi_{l}^i \Xi^j_k  +  \partial_{x^i} F_s(\rho_s)  \partial_{x^j} \Xi^i_k(\rho_s)\Xi_l^j(\rho_s)  \right) \bbZ_{st}^{lk} + o(|t-s|^{3 \alpha}) .
\end{align*}
Straightforward, but tedious, computations show that the local expansion 
\begin{align*}
\Psi_{st}  & : = \left( b_k(s,\rho_s) + \partial_{x^i} F_s (\rho_s) \Xi_k^i(\rho_s) \right)  \delta Z^k_{st}  \\
& \qquad + \left(  \partial_{x^i} b_k(s,\rho_s) \Xi^i_l(\rho_s) + \partial_{x^i} b_l(s,\rho_s) \Xi^i_k(\rho_s) +\partial_{x^j} \partial_{x^i} F_s (\rho_s)  \Xi_{l}^i \Xi^j_k  +  \partial_{x^i} F_s(\rho_s)  \partial_{x^j} \Xi^i_k(\rho_s)\Xi_l^j(\rho_s)  \right) \bbZ_{st}^{lk}
\end{align*}
satisfies $|\delta_2 \Psi_{s\theta t}| \lesssim |t-s|^{3 \alpha}$. By the uniqueness in Lemma \ref{lem:sewing}, we get \eqref{F rho equation}.

\bigskip

\textbf{Step 2.}
Let $V\in C_T^{\alpha}(\mfk{X}_{C^\infty})$ be such that
$$
V_t=V_0+\int_0^t\pi_r\rmd \bZ_r,\quad t\in [0,T],
$$
for some $(\pi, \pi') \in \clD_Z$. For any $f\in C^{\infty},$ we have
$$
(\eta_t^*V_t)[f]=(\eta_t^*V_t)[\eta_t^*\eta_{t*}f]=\eta_t^*(V_t[\eta_{t*}]).
$$
Recall that $\eta_t=\eta_{t*}\in C^{\alpha}_T(C^{\infty})$ satisfies \eqref{eq:push-forward_flow} with $s=0$. Applying Lemma \ref{lem:product_and_chain} with the continuous bilinear map $B: \mfk{X}_{C^{\infty}}\times C^{\infty}\rightarrow  C^{\infty}$, we find
$$
V_t[\eta_t] = V_0[f] +\int_0^t \left(\pi_r[g_r] - V_r[\xi[g_r]]\right)\rmd \bZ_r  .
$$
Moreover, making use of step 1 with $f_t=V_t[\eta_t]\in C^{\infty}$,  we obtain
\begin{align*}
(\eta_t^*V_t)[f]=\eta_t^*(V_t[\eta_t])&=V_0[f]+\int_0^t \eta_r^*\left(\pi_r[g_r]+\xi[V_r[g_r]]-V_r[\xi[g_r]]\right)\rmd \bZ_r\\
&=V_0[f]+\int_0^t \eta_r^*\left(\pi_r[g_r]+[\xi,V_r][g_r]\right)\rmd \bZ_r\\
&=V_0[f]+\int_0^t \left(\eta_r^*\left(\pi_r+[\xi,V_r]\right)\right)[f]\rmd \bZ_r.
\end{align*}
Because $f$ was arbitrarily chosen, we conclude that $(\eta^* V, \eta^* \pounds_{\xi} V + \eta^* \pi) \in \bclD_{\bZ}$ and 
$$
\eta_t^*V_t=V_0+\int_0^t \eta_r^*\left(\pi_r+\pounds_{\xi}V_r\right)\rmd \bZ_r.
$$
Noting that 
$$
(\eta_{t*}V_t)[f]=(\eta_{t*}V_t)[\eta_{t*}\eta_{t}^{*}f]=\eta_{t*}(V_t[\eta_t^{*}f]),
$$
and following a similar proof, we find that $(\eta_* V,  - \pounds_{\xi} \eta_* V + \eta_* \pi) \in \bclD_{\bZ}$ and 
\begin{equation} \label{lie chain rule push forward vector}
\eta_{t*}V_t=V_0+\int_0^t \left(\eta_{r*}\pi_r-\pounds_{\xi}(\eta_{r*}V_r)\right)\rmd \bZ_r.
\end{equation}

\bigskip

\textbf{Step 3.}
Assume that $\alpha\in C^{\alpha}_T(\Omega^1_{C^\infty})$ has the decomposition
$$
\alpha_t=\alpha_0+\int_0^t\pi_r\rmd \bZ_r.
$$
Fix an arbitrary vector $V\in \mfk{X}_{C^\infty}$ independent of $t$. Using \eqref{lie chain rule push forward vector} and 
Lemma \ref{lem:product_and_chain}, we get
$$
\alpha_t(\eta_{t*}V)=\alpha_0(V)+\int_0^t\left(\pi_r(\eta_{r*}V)-\alpha_r([\xi,\eta_{r*}V])\right)\rmd \bZ_r.
$$
Applying \eqref{eq:scalar_Lie_chain}, we obtain 
$$
(\eta_t^*\alpha_t)(V)=\eta_t^*\left(\alpha_t(\eta_{t*}V)\right)=\alpha_0(V)+\int_0^t\eta_r^*\left(\pi_r(\eta_{r*}V)-\alpha_r([\xi,\eta_{r*}V])+[\xi(\alpha_r(\eta_{r*}V)]\right)\rmd \bZ_r.
$$
The derivation property of the Lie derivative implies
$$
\eta_t^* \xi[\alpha_t(\eta_{t*}V) ]=(\eta_t^*(\pounds_{\xi}\alpha_t))(V)+ \eta_t^*\left(\alpha_t([\xi,\eta_{t*}V])\right),
$$
Noting that
$
\eta_t^*\left(\pi_t(\eta_{t*}V)\right)=(\eta_t^*\pi_t)(V)
$ 
and that $V$ was arbitrary, we  obtain
$$
\eta_t^*\alpha_t=\alpha_0+\int_0^t\eta_r^*\left(\pi_r+\pounds_{\xi}\alpha_r\right)\rmd \bZ_r.
$$
Following a similar argument, we get 
$$
\eta_{t*}\alpha_t = \alpha_0+\int_0^t \eta_{r*}\pi_r - \pounds_{\xi}\left(\eta_{r*}\alpha_r\right)\rmd \bZ_r,
$$
which completes step 3. 

\bigskip

\textbf{Step 4.}
Let us show how to extend to $\clT^{lk}_{C^\infty}$. Let $V_1, \dots, V_{k}\in \mfk{X}_{C^\infty}$, and $\alpha_1, \dots, \alpha_l \in\Omega^1_{C^\infty}$. We recall that 
$$
(\eta_t^* \tau_t)(\alpha_1, \dots, \alpha_l, X_1, \dots, X_k) = \eta_t^*\big(  \tau_t( \eta_{t*} \alpha_1, \dots, \eta_{t*}\alpha_l,\eta_{t*} X_1, \dots, \eta_{t*}X_k) \big)
$$
Using induction in the product formula, we obtain
\begin{align*}
\tau_t( \eta_{t*} \alpha_1, \dots, \eta_{t*} \alpha_{l}, \eta_{t*} V_1,  & \dots, \eta_{t*} V_{k})   = \tau_0(  \alpha_1, \dots, \alpha_l, V_1, \dots,  V_{k})  \\
& + \int_0^t  \big[ \gamma_r(  \eta_{r*} \alpha_1, \dots, \eta_{r*} \alpha_{l},\eta_{r*} V_1, \dots, \eta_{r*} V_{k})  \\
& - \sum_{j=1}^{l} \tau_r (  \eta_{r*} \alpha_1, \dots, \pounds_{\xi} \eta_{r*} \alpha_j, \dots, \eta_{r*} \alpha_{l}, \eta_{r*} V_1, \dots ,   \eta_{r*} V_{k} )  \\
& - \sum_{j=1}^{k} \tau_r (  \eta_{r*} \alpha_1, \dots, \eta_{r*} \alpha_{l}, \eta_{r*} V_1, \dots , \pounds_{\xi} \eta_{r*} V_j, \dots,   \eta_{r*} V_{k} ) \big] d \bZ_r
\end{align*}
which from \eqref{eq:scalar_Lie_chain} yields
\begin{align*}
\eta_t^* \tau_t(\alpha_1, \dots, \alpha_l, V_1, \dots, V_{k}) & = \tau_0( \alpha_1, \dots, \alpha_l, V_1, \dots,  V_{k}) \\
& + \int_0^t  \big[ \eta_r^*(  \gamma_r( \eta_{r*}\alpha_1, \dots, \eta_{r*}\alpha_l,\eta_{r*} V_1, \dots, \eta_{r*} V_{k}) ) \\
&-  \sum_{j=1}^{k} \tau_r (\eta_{r*}\alpha_1, \dots, \pounds_{\xi} \eta_{r*} \alpha_j, \dots,  \eta_{r*}\alpha_l, \eta_{r*} V_1, \dots  , \eta_{r*} V_{k} )  \\
& - \sum_{j=1}^{k} \tau_r (\eta_{r*}\alpha_1, \dots, \eta_{r*}\alpha_l, \eta_{r*} V_1, \dots , \pounds_{\xi} \eta_{r*} V_j, \dots,   \eta_{r*} V_{k} ) \big] d \bZ_r \\
& + \int_0^t \eta_r^*( \xi[ \tau_r( \eta_{r*} V_1, \dots, \eta_{r*} V_{k})] )\rmd \bZ_r.
\end{align*}
By the derivation property of the Lie derivative, we get
\begin{align*}
\xi[ \tau_r(  \eta_{r*}\alpha_1, \dots, \eta_{r*}\alpha_l, \eta_{r*} V_1, \dots, \eta_{r*} V_{k})] &   = (\pounds_{\xi} \tau_r)(\eta_{r*}\alpha_1, \dots, \eta_{r*}\alpha_l,\eta_{r*} V_1, \dots, \eta_{r*} V_{k}) \\
&  + \sum_{j=1}^{l} \tau_r (  \eta_{r*} \alpha_1, \dots, \pounds_{\xi} \eta_{r*} \alpha_j, \dots, \eta_{r*} \alpha_{l}, \eta_{r*} V_1, \dots ,   \eta_{r*} V_{k} )  \\
& +   \sum_{j=1}^{k} \tau_r( \eta_{r*}\alpha_1, \dots, \eta_{r*}\alpha_l, \eta_{r*} V_1, \dots , \pounds_{\xi} \eta_{r*} V_j, \dots,   \eta_{r*} V_{k} ),
\end{align*}
and hence
\begin{align*}
\eta_t^* \tau_t(\alpha_1, \dots, \alpha_l, V_1, \dots, V_{k}) & = \tau_0(  V_1, \dots,  V_{k}) + \int_0^t  \big[ \eta_r^*  \gamma_r( \eta_{r*}\alpha_1, \dots, \eta_{r*}\alpha_l,\eta_{r*} V_1, \dots, \eta_{r*} V_{k}) \\ 
& + \eta_r^* (\pounds_{\xi} \tau_r)(\eta_{r*}\alpha_1, \dots, \eta_{r*}\alpha_l,\eta_{r*} V_1, \dots, \eta_{r*} V_{k})  \big] d \bZ_r \\
& = \tau_0( \alpha_1, \dots, \alpha_l, V_1, \dots,  V_{k}) \\
& + \int_0^t  (\eta_r^*\gamma_r)( \alpha_1, \dots, \alpha_l, V_1, \dots,  V_{k}) + (\eta_r^* \pounds_{\xi} \tau_r)( \alpha_1, \dots, \alpha_l,V_1, \dots,  V_{k})   d \bZ_r .
\end{align*}
Since $\alpha_1, \dots, \alpha_l$ and $V_1, \dots, V_k$ were arbitrary, the result follows. 
\end{proof}

\subsection{Proof of the rough Kelvin--Noether Theorem \ref{thm:Kelvin}}\label{proof:Kelvin}

\begin{proof}
Let $\mu\in  \operatorname{Dens}_{C^{\infty}}$ be an arbitrary  non-vanishing density and set  $\rho=\frac{dD}{d\mu}\in C^{\infty}$ so that $D=\rho \mu$. Recall that for all $w\in \mfk{X}_{C^{\infty}}$,  $\pounds_{w}D=(\pounds_w\rho + \operatorname{div}_{\mu}w)\mu$. It follows that  for all $t\in [0,T]$,
$$
\rho_t = \rho_0-\int_0^t (\pounds_{u_r}\rho_r+\rho_r\operatorname{div}_{\mu}u_r )\rmd r -\int_0^t (\pounds_{\xi}\rho_r+\rho_r\operatorname{div}_{\mu}\xi )\rmd \bZ_r.
$$
Using the Lemma \ref{lem:product_and_chain}  and the identity $\pounds_{w}\frac{1}{\rho}=-\frac{1}{\rho^2}\pounds_{w} \rho$, $w\in \mfk{X}_{C^{\infty}}$, we find
\begin{align*}
\frac{1}{\rho_t}&=\frac{1}{\rho_0} +\int_0^t\left(-\pounds_{u_r}\frac{1}{\rho_r}+\frac{1}{\rho_r}\operatorname{div}_{\mu}u_r \right)\rmd r +\int_0^t \left(-\pounds_{\xi}\frac{1}{\rho_r}+\frac{1}{\rho_r}\operatorname{div}_{\mu}\xi \right)\rmd\bZ_r.
\end{align*}
For all $m=\alpha\otimes \nu \in \mfk{X}_{C^{\infty}}^{\vee}$ and $w\in \mfk{X}_{C^{\infty}}$, we have
$$
\pounds_{w} m =\pounds_{w} \left( \alpha\frac{d\nu}{d\mu}\otimes \mu\right)=\left( \pounds_{w}\left(\alpha_i \frac{d\nu}{d\mu}\right)+(\operatorname{div}_{\mu}w)\alpha_i \frac{d\nu}{d\mu} \right)\otimes \mu=\pounds_{w}\left(\frac{m}{\mu}\right)+(\operatorname{div}_{\mu}w)\frac{m}{\mu}.
$$
Therefore, 
$$
\frac{m_t}{\mu}=\frac{m_0}{\mu}+\int_0^t\left(\frac{1}{\mu}\frac{\sym{\delta} \ell}{\sym{\delta} a}(u_r,a_r)\diamond a_t-\pounds_{u_r}\left(\frac{m_r}{\mu}\right)-(\operatorname{div}_{\mu}u_r)\frac{m_r}{\mu}\right)\rmd r-\int_s^t\left(\pounds_{\xi}\left(\frac{m_r}{\mu}\right)+ (\operatorname{div}_{\mu}\xi)\frac{m_r}{\mu} \right)\rmd \bZ_r.
$$
Applying the Lemma \ref{lem:product_and_chain} and the identity 
$
\frac{m}{D_t}=\frac{1}{\frac{dD_t}{d\nu}}\frac{m}{\mu}=\frac{1}{\rho_t}\frac{m}{\mu},
$
we arrive at 
$$
\frac{m_t}{D_t}=\frac{m_s}{D_s} +\int_s^t\frac{1}{D_r}\left(\frac{\sym{\delta} \ell}{\sym{\delta} a}(u_t,a_t)\diamond a_t-\pounds_{u_r}\left(\frac{m_r}{D_r}\right)\right)\rmd r-\int_s^t\frac{1}{D_r}\pounds_{\xi}\left(\frac{m_r}{D_r}\right) \rmd \bZ_r.
$$
We then complete the proof by applying  Corollary \ref{corr:Reyn_trans}  with $\alpha={m}/{D}$.
\end{proof}

\subsection{Proof of the rough Hamilton--Pontryagin Theorem \ref{thm:HamPont}}\label{proof:Ham_Pont}
\begin{proof}
	
If   $(u, g, \boldsymbol{\lambda})\in \mathit{HP}_{\bZ}$ is a critical point of the action functional, 
then 
$$
0=\left. \frac{d}{d\epsilon} \right|_{\epsilon=0}S^{\mathit{HP}_{\bZ}}_{a_0}(u^{\epsilon}, \eta^{\epsilon}, \boldsymbol{\lambda}^{\epsilon})=I(\sym{\delta} u)+II(\sym{\delta}w)+III(\sym{\delta}\lambda),
$$
where 
\begin{align*}
I(\sym{\delta} u)&=\int_0^T \left\langle \frac{ \sym{\delta} \ell}{\sym{\delta} u}(u_t,a_t) -  \lambda_t, \sym{\delta} u_t \right\rangle_{\mathfrak{X}} \rmd t\\
II(\sym{\delta}w)&=\left. \frac{d}{d\epsilon} \right|_{\epsilon=0}\int_0^T\ell(u_t,(\eta_t^{\epsilon})_*a_0)\rmd t+   \left. \frac{d}{d\epsilon} \right|_{\epsilon=0}\int_0^T\langle \lambda_t, \rmd \eta^{\epsilon,-1}_t\eta^{\epsilon} _t\rangle_{\mfk{X}} \\
III(\sym{\delta}\lambda)&=\int_0^T\langle \sym{\delta}\lambda_t,\rmd \eta_t\circ \eta^{-1}_t\rangle_{\mfk{X}}-\int_0^T \langle \sym{\delta}\lambda_t,u_t\rangle_{\mfk{X}}\rmd t - \int_0^T \langle \sym{\delta}\lambda_t,\xi \rangle_{\mfk{X}}\rmd \bZ_t.
\end{align*}	
By virtue of the fundamental lemma of calculus of variations, $I(\sym{\delta}u)=0$ implies $m=\frac{\sym{\delta} \ell}{\sym{\delta} u}(u,a)\overset{\mfk{X}^{\vee}}{=} \lambda$. Separating variations in time and space and applying Theorem \ref{thm:truly_rough}, from $III(\sym{\delta}\lambda)=0$, deduce that $v\equiv u$ and $\sigma \equiv \xi$.

We now focus on $II(\sym{\delta}w)=0$. 
By the equality of mixed derivatives (see, also, Lemma 3.1 in \cite{arnaudon2014stochastic}), we have
$$
\frac{\partial^2 \psi^{\epsilon}_t}{\partial t\partial \epsilon} =\frac{\partial^2 \psi^{\epsilon}_t}{\partial \epsilon \partial t} =  \partial_t\sym{\delta}w_t\circ \psi^{\epsilon}_t +  \epsilon \frac{\partial}{\partial \epsilon}\left[\partial_t\sym{\delta}w_t\circ \psi^{\epsilon}_t\right], \quad \forall  (\epsilon, t)\in [-1,1]\times [0,T].
$$
Using the above  relation and that $\psi^0_tX=X$, we find $\frac{\partial \psi^{\epsilon}}{\partial \epsilon} \big|_{\epsilon=0}=\sym{\delta}w,$ and hence
$$
\left. \frac{d}{d\epsilon} \right|_{\epsilon=0}v^{\epsilon}_t=\left. \frac{d}{d\epsilon} \right|_{\epsilon=0}(\psi^{\epsilon}_t)_*v= -[\sym{\delta}w_t, v_t]=\operatorname{ad}_{\sym{\delta}w_t}v_t.
$$
Therefore, 
$$
\left. \frac{d}{d\epsilon} \right|_{\epsilon=0}\int_0^T\langle \lambda_t, \rmd \eta^{\epsilon,-1}_t\eta^{\epsilon} _t\rangle_{\mfk{X}} =\int_0^T\langle \lambda_t, \partial_t\sym{\delta}w_t+\operatorname{ad}_{\sym{\delta}w_t}v_t\rangle_{\mfk{X}} \rmd t+ \int_0^T\langle \lambda_t, \operatorname{ad}_{\sym{\delta}w_t}\sigma_t\rangle_{\mfk{X}} \rmd \bZ_t,
$$
where we have exchanged the order of $\frac{d}{d\epsilon}$ and the rough integral using Theorem \ref{thm:rough_int}. Moreover,  
$$
\left. \frac{d}{d\epsilon} \right|_{\epsilon=0}(\eta^{\epsilon}_t)_*a_0=\left. \frac{d}{d\epsilon} \right|_{\epsilon=0}(\psi_t^{\epsilon})_*a_t=-\pounds_{\sym{\delta} w_t}a_t, \;\; \forall t\in [0,T],
$$
which implies that 
$$
\left. \frac{d}{d\epsilon} \right|_{\epsilon=0}\int_0^T\ell(u_t,(\eta_t^{\epsilon})_*a_0)\rmd t=\int_0^T \langle \frac{\sym{\delta}\ell}{\sym{\delta} a}(u_t,a_t), -\pounds_{\sym{\delta} w_t}a_t\rangle_{\mfk{A}} \rmd t= \int_0^T\langle \frac{\sym{\delta}\ell}{\sym{\delta} a}(u_t,a_t)\diamond a_t, \sym{\delta} w_t\rangle_{\mfk{A}} \rmd t.
$$
The proof is completed by splitting the variations  of $\sym{\delta}w$ in space and time and applying Lemma \ref{lem:fund_calc_var}.
\end{proof}

\subsection{Proof of the rough Euler--Poincar\'e Theorem \ref{EP-thm}}\label{proof:EP-thm}
\begin{proof}
Using the definitions of $\sym{\delta} u$ and $\sym{\delta} a$ in \eqref{epvariations}, integrating by parts, and taking the endpoint conditions $w_0=w_T=0$  into account, we find
\begin{align*}
\delta S^{\mathit{EP}_{\bZ}}&=
\int_{0}^{T}\langle 
\frac{ \sym{\delta} \ell}{\sym{\delta} u_t}(u_t,a_t)\,,
\sym{\delta} u_t \rangle_{\mfk{X}} dt+ \langle  
\frac{\sym{\delta} \ell}{\sym{\delta} a_t}(u_t,a_t), \sym{\delta} a_t\rangle_{\mfk{A}} 
\\  &=
\int_{0}^{T}  \langle 
\frac{\sym{\delta} \ell}{\sym{\delta} u_t}(u_t,a_t)\,, \partial_t \sym{\delta} w \rangle_{\mfk{X}} \rmd t+ \langle 
\frac{\sym{\delta} \ell}{\sym{\delta} u_t}(u_t,a_t)\,,  \operatorname{ad}_{\rmd x_t} \sym{\delta} w\rangle_{\mfk{X}} -
\langle  \frac{\sym{\delta} \ell }{\sym{\delta} a_t}(u_t,a_t)
\diamond a_t\,,
\sym{\delta} w \rangle_{\mfk{X}} \rmd t
\\ &=
\int_{0}^{T} \langle  -\, \rmd 
\left(\frac{\sym{\delta} \ell }{ \sym{\delta} u_t} \right) -
 \operatorname{ad}_{\rmd x_t}^{\ast}\frac{\sym{\delta} \ell}{\sym{\delta} u_t}\,, \sym{\delta} w
\rangle_{\mfk{X}} + \langle  \frac{\sym{\delta} \ell }{\sym{\delta} a_t}
\diamond a_t\,,
\sym{\delta} w \rangle_{\mfk{X}} \,dt 
\\ &=
\int_{0}^{T} \langle  - \,\rmd 
\left(\frac{ \sym{\delta} \ell }{ \sym{\delta} u_t}\right) -
 \pounds_{\rmd x_t}^{\ast}\frac{\sym{\delta} \ell}{\sym{\delta} u_t} +
\frac{\sym{\delta} \ell }{\sym{\delta} a_t} \diamond a_t\,dt \,, \sym{\delta} w
\rangle_{\mfk{X}} .
 \end{align*}
 We conclude with the corresponding momentum equation by splitting up variations in space and time and applying Lemma \ref{lem:fund_calc_var}.
In addition, the advection equation $\rmd a_t +\pounds_{{\rm d}x_t}a_t = 0$ follows from the push-forward relation $a_t=(\eta_t)_*a_0$ by the Lie chain rule in Theorem \ref{thm:Lie_chain}. These two results complete the proof of Theorem \ref{EP-thm}.
\end{proof}

\appendix

\section{Notation and required background} \label{App-notation_and_background}

\subsection{Geometric rough paths}

In this section, we will provide an overview of the theory of geometric rough paths. We invite the reader to consult Appendix \ref{sec:motivation} for a historical account motivating the use of rough paths and \cite{lyons2007differential, friz2010multidimensional, friz2014course, bailleul2014flow} for more thorough expositions.

Let $T>0$,  $\Delta_T^2=\{(s,t)\in [0,T]^2:   s\le t\}$ and $\Delta_T^3=\{(s,\theta,t)\in [0,T]^3: s\le \theta \le t\}$. Let $E$ denote an arbitrary Fr\'echet space $E$ with family of seminorms $\bclP$. Elements of family of seminorms $\bclP$ will be denoted by $p$. For a given $\alpha\in (0,1]$, let $C^{\alpha}_T(E)$ denote the  space of H\"older continuous paths; in particular, $C^1_T(E)$ is the space of Lipschitz paths. Moreover, for a given $m\in \{2,3\}$\footnote{We only need $m=2,3$ because we consider only rough paths with H\"older regularity  $\alpha\in \left(\frac{1}{3},\frac{1}{2}\right]$.} and $\alpha \in \bbR_+$, denote by $C^{\alpha}_{m,T}(E)$ the space of functions that satisfy
$$
[\Xi]_{\alpha,p} = \underset{t_1\ne t_m}{\sup_{(t_1,\cdots,t_m)\in \Delta_T^m}} \frac{ p\left(\Xi_{t_1, \ldots, t_m}\right)}{|t_m-t_1|^{\alpha}}<\infty, \;\; p\in \bclP.
$$
Define  $\delta: C^{\alpha}_T(E)\rightarrow    C^{\alpha}_{2,T}(E)$ by 
$
\delta f=_{st} := f_t - f_s
$
for $f\in C^{\alpha}_T(E)$
and   $\delta_2: C^{\alpha}_{2,T}(E)\rightarrow   C^{\alpha}_{3,T}(E) $ by 
$$
\delta_2 \Xi_{s\theta t} := \Xi_{st} - \Xi_{s\theta } - \Xi_{\theta t}, \;\; (s,\theta , t)\in \Delta_{T}^{3}, \;\; \Xi\in C^{\alpha}_{2,T}(E).
$$
It follows that $\delta_2\circ \delta: C_T(E)\rightarrow  C_{3,T}(E)$ is the zero operator. 

For a given  $\Xi\in C^{\alpha}_{2,T}(E)$, $\beta\in \bbR_+$, and $p\in \bclP$, the quantity $[\delta_2\Xi]_{\beta,p}$, defined above, may be regarded as a measure of the extent to which $\Xi$  is an increment $\delta f$ for some $f\in C^{\alpha}_T(E)$.  The following lemma, proved in \cite{hocquet2018energy}[Proposition A.1],  is referred to as the \emph{sewing lemma}. The lemma says that if $\beta>1$, one can construct a ``unique'' $f\in C_T^{\alpha}(E)$ such that $\Xi$ is close to $\delta f$ in $C^{\beta}_{2,T}(E)$ by \eqref{eq:sewing}. Denote by 

\begin{lemma}[Sewing Lemma]\label{lem:sewing}
	There exists a  unique continuous linear map  $\clI : C^{\alpha,\beta}_{2,T}(E)\rightarrow C^{\alpha}_T(E)$ satisfying $\clI \Xi_0=0$ and
	$
	[\delta \clI \Xi - \Xi]_{\beta}\lesssim_{\beta} [\delta_2 \Xi]_{\beta,p}
	$
	for all $\Xi\in C^{\alpha,\beta}_{2,T}(E)$ and $p\in \bclP$. 
	More explicitly, for a given $(s,t)\in \Delta_T^2$,
	\begin{equation}\label{eq:sewing}
	\delta(\clI \Xi)_{st} = \lim_{|\clP([s,t])|\rightarrow 0} \sum_{[t_i,t_{i+1}]\in \clP([s,t])} \Xi_{t_it_{i+1}},
	\end{equation}
	where $\clP([s,t])$ denotes a finite partition of the interval $[s,t]$, $|\clP([s,t])|$ denotes its mesh size, and the limit is understood as a limit of nets (with the directed set of partitions partially ordered by inclusion).
\end{lemma}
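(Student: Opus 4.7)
The plan is to first settle uniqueness and then construct $\clI$ via a dyadic refinement scheme. For uniqueness, suppose $\clI_1,\clI_2$ both satisfy the stated requirements and set $g := \clI_1\Xi - \clI_2\Xi$. Then $g_0 = 0$ and the triangle inequality applied to the defect estimate gives, for every $p\in\bclP$,
$$
p(\delta g_{st}) \;\lesssim_\beta\; [\delta_2\Xi]_{\beta,p}\,|t-s|^\beta,
$$
with $\beta > 1$. A telescoping argument over a refining sequence of partitions of $[s,t]$ then shows that $p(g_t - g_s) = 0$ for every $(s,t)\in\Delta_T^2$ and every $p\in\bclP$, forcing $g \equiv g_0 = 0$.

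For existence, fix $(s,t)\in\Delta_T^2$ and let $\clP_n$ denote the dyadic partition of $[s,t]$ into $2^n$ equal pieces. Define
$$
S_n(s,t) := \sum_{[u,v]\in \clP_n}\Xi_{uv}.
$$
Writing $\Xi_{uv} = \Xi_{uw} + \Xi_{wv} - \delta_2\Xi_{uwv}$ at the midpoint $w$ of each $[u,v]\in\clP_n$, I would obtain
$$
p\bigl(S_{n+1}(s,t) - S_n(s,t)\bigr) \;\le\; 2^n\,[\delta_2\Xi]_{\beta,p}\,(|t-s|/2^n)^\beta \;=\; [\delta_2\Xi]_{\beta,p}\,|t-s|^\beta\,2^{-n(\beta-1)}.
$$
Because $\beta > 1$, the series $\sum_n p(S_{n+1}-S_n)$ converges geometrically, so $\{S_n(s,t)\}_{n\in\bbN}$ is Cauchy in $E$; completeness of the Fr\'echet space $E$ lets us define $\Lambda_{st} := \lim_n S_n(s,t)$. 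Telescoping and using $S_0(s,t) = \Xi_{st}$ yields the defect estimate
$$
p(\Lambda_{st} - \Xi_{st}) \;\lesssim_\beta\; [\delta_2\Xi]_{\beta,p}\,|t-s|^\beta,
$$
which is the bound required by the statement.

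Next I would establish additivity $\Lambda_{s\theta} + \Lambda_{\theta t} = \Lambda_{st}$ for every $\theta\in[s,t]$, by approximating $\theta$ by dyadic points in $[s,t]$ and comparing the corresponding partial sums, using the defect bound to control the error in the limit. Additivity allows us to set $(\clI\Xi)_t := \Lambda_{0t}$, yielding a path with $(\clI\Xi)_0 = 0$ and $\delta(\clI\Xi) = \Lambda$. The H\"older regularity $\clI\Xi\in C^\alpha_T(E)$ follows by splitting $\delta(\clI\Xi)_{st} = \Xi_{st} + (\Lambda_{st} - \Xi_{st})$ and bounding each piece; linearity is immediate from the construction and continuity is encoded in the defect estimate itself. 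The partition-independent formula \eqref{eq:sewing} is then obtained by a refinement argument: for an arbitrary $\pi\in\clP([s,t])$, compare $\sum_{[t_i,t_{i+1}]\in\pi}\Xi_{t_it_{i+1}}$ with $S_n(s,t)$ for a dyadic refinement, the discrepancy being bounded by $\sum_i[\delta_2\Xi]_{\beta,p}|t_{i+1}-t_i|^\beta \le [\delta_2\Xi]_{\beta,p}|\pi|^{\beta-1}(t-s)$, which vanishes as $|\pi|\to 0$.

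The main technical obstacle will be proving the additivity of $\Lambda$ in the intermediate point $\theta$: without it $\Lambda$ is only a candidate increment and not the increment of any path. This is precisely where the hypothesis $\beta > 1$ is essential, and one must track the errors across possibly non-dyadic $\theta$ carefully so that they are controlled uniformly in every seminorm $p\in\bclP$ of the Fr\'echet space $E$, which is the one place where the Fr\'echet (rather than Banach) setting requires care.
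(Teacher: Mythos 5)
Your proposal is correct: this is the classical dyadic-refinement construction of the sewing map, and you have rightly identified the additivity $\Lambda_{s\theta}+\Lambda_{\theta t}=\Lambda_{st}$ of the limiting increment as the delicate step, with the subsequent identification $\delta(\clI\Xi)_{st}=\Lambda_{st}$ and the partition-independent formula \eqref{eq:sewing} following directly from additivity together with the defect bound. Note that the paper itself supplies no proof of Lemma \ref{lem:sewing} but defers to \cite{hocquet2018energy}[Proposition A.1], whose argument is of the same flavour -- partition refinement estimates exploiting $\beta>1$, applied seminorm by seminorm in the Fr\'echet setting -- so your approach is in line with the cited reference.
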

\begin{remark}
Notice that if $\tilde{\Xi}\in C^{\alpha,\beta}_{2,T}(E)$ and $\Xi-\tilde{\Xi}\in C^{\beta}_{2,T}(E)$, then $\clI(\Xi)=\clI(\tilde{\Xi})$.
\end{remark}

For a given Fr\'echet space $E$ and $K\in \bbN$, let $E^K$ denote the direct sum of $E$ with itself $K$-times. By virtue of the Sewing Lemma, one can construct an integral of $Y\in C^{\beta}_T(E^K)$  against $Z\in C^{\alpha}_T(\bbR^K)$   if $\alpha+\beta>1$ by letting $\Xi_{st}=Y_s \delta Z_{st}=\sum_{k=1}^KY_s^k\delta Z^k_{st}$ for all$(s,t)\in \Delta_T^2$ and defining
$$
\int_0^{t}Y_r\rmd Z_r=\clI(\Xi)_{t}, \;\; t\in [0,T].
$$
This integral construction coincides with the integral that L.C.\ Young  \cite{young1936inequality} constructed.
In particular, for  $Z\in C^{\alpha}_T(\bbR^K)$ with $\alpha\in \left(\frac{1}{2},\infty\right)$, we may define $\bbZ\in C^2_{2,T}(\bbR^{K\times K})$ by
$$
\bbZ_{st}=\int_s^t\int_s^{t_2}\rmd Z_{t_1}\otimes \rmd Z_{t_2}=\int_s^t\delta Z_{st_2}\otimes \rmd Z_{t_2}, \quad (s,t)\in \Delta_T,
$$
where we have used the $\delta$ notation defined above in the second equality.
One can easily verify that $(Z,\bbZ)\in C^{\alpha}_T(\bbR^K)\times C^{2\alpha}_T(\bbR^{K\times K})$ satisfies
\begin{equation}\label{eq:Chen_rel}
\delta_2\bbZ_{st}= \delta Z_{s\theta } \otimes \delta Z_{\theta t},  \;\;\forall (s,\theta ,t)\in \Delta_T^3
\end{equation}
and 
\begin{equation}\label{eq:geom_main}
\operatorname{Sym}(\bbZ_{st}) = \frac{1}{2}\delta Z_{st}\otimes \delta Z_{st} \;\; \forall (s,t)\in \Delta_T^2.
\end{equation}

The condition $\eqref{eq:geom_main}$  is a geometric property which encodes the usual chain and product rules, upon which our variational theory is based.
Paths  $Z\in C^{\alpha}_T(\bbR^K)$ with  $\alpha\in (\frac{1}{2},1]$ are  referred to as Young paths.  Young paths are   distinguished from rough paths $\bZ=(Z,\bbZ)\in C^{\alpha}_T(\bbR^K)\times C^{\alpha}_T(\bbR^{K\times K})$, $\alpha\in (\frac{1}{3},\frac{1}{2}]$, which are defined to be paths  such that an a priori postulated  two-parameter path $\bbZ\in C_T^{2\alpha}(\bbR^{K\times K})$ satisfies \eqref{eq:Chen_rel}. A subclass of rough paths are the geometric rough paths, for which a classical calculus can be developed. In particular, \eqref{eq:geom_main} holds.  For a few more words of motivation about rough paths see Appendix \ref{sec:motivation}.

\begin{definition}
For a given $\alpha\in \left(\frac{1}{3},\frac{1}{2}\right]$, define the set $\bclC_{g,T}^{\alpha}(\bbR^K)$ of \textit{geometric $K$-dimensional $\alpha$-H\"older rough paths on the interval $[0,T]$} to be the closure of 
$$
\left\{(Z,\bbZ)\in C^{1}_T(\bbR^K) \oplus  C_{2,T}^{1}(\bbR^{K\times K}): \bbZ=\iint \rmd Z\otimes \rmd Z\right\}
$$
in $C^{\alpha}_T(\bbR^K) \oplus  C_{2,T}^{\alpha}(\bbR^{K\times K})$ with respect to the metric
$$
\rho(\bZ^{(1)}, \bZ^{(2)})=[Z^{(1)}- Z^{(2)}]_{\alpha} + [\bbZ^{(1)} - \bbZ^{(2)}]_{2\alpha}.
$$
It follows that both \eqref{eq:Chen_rel} and \eqref{eq:geom_main} hold for all $\bZ=(Z,\bbZ)\in \bclC_{g,T}^{\alpha}(\bbR^K)$ by a limiting argument. For a given $\alpha\in \left(\frac{1}{2},1\right]$, we denote $\bclC_{g,T}^{\alpha}(\bbR^K)= C^{\alpha}_T(\bbR^K)$. 
\end{definition}
\begin{remark}
To have a uniform notation for all $\alpha\in \left(\frac{1}{3},1\right]$, we write $\bZ=Z\in  \bclC_{g,T}^{\alpha}(\bbR^K)$ if $\alpha\in \left(\frac{1}{2},1\right].$
\end{remark}

It is possible to consider infinite-dimensional geometric rough paths, but for simplicity we restrict ourselves to finite-dimensional paths. However, we consider controlled rough paths (defined in the next section) in Fr\'echet spaces. We also remark that our theory can be extended to more irregular paths $\alpha<\frac{1}{3}$, but this requires higher-order iterated integrals and  more cumbersome notation. 

There are a large class of Gaussian processes that belong to $\bclC_{g,T}^{\alpha}(\bbR^K)$ for $\alpha\in \left(\frac{1}{3},\frac{1}{2}\right]$. We refer the reader to Appendix \ref{sec:Gaussian_rough_paths} for a slightly more in-depth discussion of Gaussian rough paths. The present discussion will be brief.

\begin{example}[Stratonovich Brownian motion]
Consider a Brownian motion $B:\Omega\times [0,T]\rightarrow \bbR^K$ on a complete probability space  $(\Omega, \clF, \bbP)$ and let $\bbB$  be the stochastic iterated integral constructed from Stratonovich integration theory. By virtue of the Kolmogorov continuity theorem,  one can find a event $\tilde{\Omega}\in \clF$ with $\bbP(\tilde{\Omega})=1$ such that $\bB(\omega)=(B(\omega),\bbB(\omega)\in \bclC^{\alpha}_{g,T}(\bbR^K)$ for any $\alpha \in \left(\frac{1}{3},\frac{1}{2}\right)$. This the Stratonovich lift of Brownian motion. Indeed, the Stratonovich integral is a limit of integrals of piecewise-linear approximations of Brownian motion.
\end{example}
\begin{example}[Gaussian rough paths]
	More broadly, a Gaussian process $Z: \Omega\times [0,T]\rightarrow \bbR^K$ can be lifted to a geometric rough path $\bZ(\omega)=(Z(\omega),\bbZ(\omega)\in \bclC^{\alpha}_{g,T}(\bbR^K)$ for $\alpha \in \left(\frac{1}{3},\tilde{\alpha}\right]$, provided the correlation in time of the process is fast enough depending on $\tilde{\alpha}$ (see Appendix \ref{sec:Gaussian_rough_paths}). In particular, fractional Brownian motion $B^H:\Omega \times [0,T]\rightarrow \bbR^K$ can be lifted to a strong geometric rough path $\bB^H(\omega)=(B^H(\omega),\bbB^H(\omega)\in \bclC^{\alpha}_{g,T}(\bbR^K)$ for all $\alpha \in \left(\frac{1}{3},\frac{1}{4H}\right)$ for all $\omega$ in some event of probability one.
\end{example}

\subsubsection{Controlled rough paths and integration}\label{Gubinelli-deriv}
Let us first describe the integration theory for paths $\bZ=(Z,\bbZ)\in \bclC^{\alpha}_{g,T}(\bbR^K)$ such that $\alpha \in \left(\frac{1}{3},\frac{1}{2}\right]$. 

\begin{definition}[Controlled rough path]
	We say a path $Y\in  C^{\alpha}_T(E)$ is \emph{controlled} by $Z$, if there exists a $Y'\in C^{\alpha}_T(E^{K})$ such that $R^Y: \Delta_T^2\rightarrow W$ defined by 
	\begin{equation}\label{eq:control_main}
	R^{Y}_{st} = \delta Y_{st} -Y_s'\delta Z_{st}=\delta Y_{st}-\sum_{k=1}^KY_s'^{k}\delta Z^k_{st}, \;\;(s,t)\in \Delta_T^2,
	\end{equation} 
	satisfies $R^{Y}\in C_{2,T}^{2\alpha}(E)$. For $\alpha\in \left(\frac{1}{3}, \frac{1}{2}\right]$, we define the linear space  $\bclD_{Z,T}(E)$ of controlled rough paths to be those pairs $\bY = (Y, Y')\in C^{\alpha}_T(E)\oplus C^{\alpha}_T(E^K)$ such that $R^Y\in C_{2,T}^{2\alpha}(E)$. The function $Y'$ is referred to as the \textit{Gubinelli derivative} \cite{gubinelli2004controlling}. The space $\bclD_{Z,T}(E)$ is a Fr\'echet space  with seminorms
	\begin{equation}\label{def:controlled_norm}
	|\bY|_{\bZ,p} =|Y_0|_p+|Y'_0|_p+[Y']_{\alpha,p} + [R^Y]_{2\alpha,p}, \quad p\in \clP.
	\end{equation}
\end{definition}

We note that any $Y\in C^{2\alpha}_T(E)$ satisfies \eqref{eq:control_main} with $Y'\equiv 0$. Moreover, $\bZ$  itself is controlled with $Z'\equiv \operatorname{id}$. However, the additional structure provided by $Y'$ is natural in the context of rough differential equations (see Remark \ref{rem:soln_RDE_controlled}). It is worth mentioning that $Y'$ is not uniquely specified unless the path is truly rough  (see Definition \ref{def:truly_rough} and Theorem \ref{thm:truly_rough} below). The integration of controlled rough paths is an immediate consequence of Lemma \ref{lem:sewing}

\begin{theorem} \label{thm:rough_int}
Let $\bZ=(Z,\bbZ)\in \bclC_{g,T}^{\alpha}(\bbR^K)$ for a given   $\alpha\in \left(\frac{1}{3}, \frac{1}{2}\right]$.  There exists a  linear continuous map $\bI_{\bZ}: \bclD_{Z,T}(E^K)\rightarrow \bclD_{Z,T}(E)$ defined by $\bI_{\bZ}(\bY)=(\clI(\Xi),Y)$, where 
$$
\Xi_{st}=Y_s \delta Z_{st}  +Y_s'\bbZ_{st}=\sum_{k=1}^KY_s^k\delta Z^k_{st} + \sum_{k=1}^KY_s'^{kl}\bbZ^{lk}, \;\;(s,t)\in \Delta_T^2,
$$
and $\clI$ is as in Lemma \ref{lem:sewing}. We write $$\int_0^{t} Y_r\rmd \bZ_r=\clI(\Xi)\in C^{\alpha}_T(E), \;\; t\in [0,T].$$
\end{theorem}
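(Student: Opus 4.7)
The plan is to apply the sewing lemma (Lemma \ref{lem:sewing}) to the two-parameter map
\[
\Xi_{st} := Y_s\,\delta Z_{st} + Y'_s\,\bbZ_{st},\qquad (s,t)\in \Delta_T^2,
\]
so the main task is to verify that for every seminorm $p\in\bclP$ the increment $\delta_2\Xi$ lies in $C^{3\alpha}_{3,T}(E)$, and then to identify the Gubinelli derivative of the resulting path.

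First I would compute $\delta_2\Xi_{s\theta t}$ explicitly. Using $\delta Z_{st}=\delta Z_{s\theta}+\delta Z_{\theta t}$ and Chen's relation \eqref{eq:Chen_rel}, $\bbZ_{st}=\bbZ_{s\theta}+\bbZ_{\theta t}+\delta Z_{s\theta}\otimes\delta Z_{\theta t}$, a direct algebraic manipulation yields
\[
\delta_2\Xi_{s\theta t} \;=\; -\bigl(\delta Y_{s\theta}-Y'_s\,\delta Z_{s\theta}\bigr)\delta Z_{\theta t}\;-\;\delta Y'_{s\theta}\,\bbZ_{\theta t}\;=\;-R^Y_{s\theta}\,\delta Z_{\theta t}\;-\;\delta Y'_{s\theta}\,\bbZ_{\theta t},
\]
where $R^Y$ is the remainder from \eqref{eq:control_main}. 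Since $\bY=(Y,Y')\in\bclD_{Z,T}(E^K)$, we have $p(R^Y_{s\theta})\lesssim |\bY|_{\bZ,p}\,|\theta-s|^{2\alpha}$ and $p(\delta Y'_{s\theta})\lesssim |\bY|_{\bZ,p}\,|\theta-s|^{\alpha}$; combined with $|\delta Z_{\theta t}|\lesssim [Z]_\alpha |t-\theta|^{\alpha}$ and $|\bbZ_{\theta t}|\lesssim [\bbZ]_{2\alpha}|t-\theta|^{2\alpha}$, each term is bounded by a constant times $|t-s|^{3\alpha}$. Because $\alpha>1/3$ gives $3\alpha>1$, we obtain $\delta_2\Xi\in C^{3\alpha}_{3,T}(E)$ with seminorm controlled by $|\bY|_{\bZ,p}\,\rho(\bZ,0)$.

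Next, observing that $\Xi\in C^{\alpha}_{2,T}(E)$ (with trivial bound from $Y_s\delta Z_{st}$), the sewing lemma produces a unique path $\clI(\Xi)\in C^{\alpha}_T(E)$ with $\clI(\Xi)_0=0$ such that $\bigl[\delta\clI(\Xi)-\Xi\bigr]_{3\alpha,p}\lesssim_\alpha [\delta_2\Xi]_{3\alpha,p}$. I would then show that the pair $\bigl(\clI(\Xi),Y\bigr)$ lies in $\bclD_{Z,T}(E)$: by construction
\[
R^{\clI(\Xi)}_{st} \;=\; \delta\clI(\Xi)_{st}-Y_s\,\delta Z_{st} \;=\; Y'_s\,\bbZ_{st}\;+\;\bigl(\delta\clI(\Xi)_{st}-\Xi_{st}\bigr),
\]
and the first summand is $O(|t-s|^{2\alpha})$ because $\bbZ\in C^{2\alpha}_{2,T}(\mathbb{R}^{K\times K})$, while the second is even of order $|t-s|^{3\alpha}\le |t-s|^{2\alpha}$. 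Hence $R^{\clI(\Xi)}\in C^{2\alpha}_{2,T}(E)$, and the defining identity for Gubinelli derivatives gives $(\clI(\Xi))'=Y$ as claimed.

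Finally, linearity of $\bI_{\bZ}$ is immediate from linearity of $\bY\mapsto\Xi$ and of the sewing map $\clI$ (Lemma \ref{lem:sewing}). For continuity, I would assemble the bounds above: from the sewing estimate $[\delta\clI(\Xi)-\Xi]_{3\alpha,p}\lesssim [\delta_2\Xi]_{3\alpha,p}$ together with the pointwise identity for $R^{\clI(\Xi)}$ established in the previous step, one controls each term of the seminorm \eqref{def:controlled_norm} of $\bI_{\bZ}(\bY)$ by $C(\bZ)\,|\bY|_{\bZ,p}$. The only step that requires any real care is the algebraic decomposition of $\delta_2\Xi_{s\theta t}$ via Chen's relation so that the cancellation $R^Y\,\delta Z + \delta Y'\,\bbZ$ appears cleanly; once that identity is in hand, the regularity bookkeeping $\alpha+2\alpha=3\alpha>1$ and the sewing lemma do the rest.
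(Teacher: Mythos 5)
Your proof is correct and takes exactly the approach the paper has in mind: the paper states Theorem \ref{thm:rough_int} is an ``immediate consequence'' of the sewing lemma and gives no further details, and your argument supplies precisely those details (cf.\ Theorem 4.10 of \cite{friz2014course}). The key algebraic identity $\delta_2\Xi_{s\theta t}=-R^Y_{s\theta}\,\delta Z_{\theta t}-\delta Y'_{s\theta}\,\bbZ_{\theta t}$ via Chen's relation, the $3\alpha>1$ regularity check, and the identification $R^{\clI(\Xi)}_{st}=Y'_s\bbZ_{st}+\bigl(\delta\clI(\Xi)_{st}-\Xi_{st}\bigr)\in C^{2\alpha}_{2,T}(E)$ showing the Gubinelli derivative is $Y$ are all correct.
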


\begin{remark}[Integral of controlled path against a controlled path]\label{rem:int_of_control_control}
Let $F,G$  denote a Fr\'echet  spaces and  $B: F\times E\rightarrow G$ be continuous  and bilinear. 
For  $\bY=(Y,Y')\in \bclD_{Z,T}(E)$ and $\bX=(X,X')\in \bclD_{Z,T}(F)$,  we  define
\begin{equation}\label{def:int_cont_cont}
\int_{s}^t B(X_r,\rmd \bY_r)=(\delta \clI \Xi)_{st},\quad \textnormal{where} \quad 
\Xi_{st}=B(X_s,\delta Y_{st})+B(X'_s, Y'_s)\bbZ_{st}, \quad (s,t)\in \Delta_{T}^2,
\end{equation}
Indeed, for all $(s,\theta, t)\in \Delta_T^3$,
$$
\delta_2 \Xi_{s\theta t}=-B(R^X_{s\theta },R^Y_{\theta t}) -B(R^X_{s\theta },Y'_{\theta})\delta Z_{\theta t}-B(X'_s,R^Y_{\theta t})\delta Z_{s\theta} -B(X'_s,\delta Y'_{s\theta })\delta Z_{s\theta }\otimes \delta Z_{\theta t}+(B(X'_s,Y'_s)-B(X'_\theta,Y'_\theta))\bbZ_{\theta t},
$$
which implies in $ \Xi\in C^{3\alpha}_{2,T}(G)$, so that we may apply Lemma \ref{lem:sewing}.  Notice that if $Y\in C_T^{2\alpha}(E)$ and $Y'\equiv 0$, then \eqref{def:int_cont_cont} agrees with the Young integral. This definition is used in the Clebsch variational principle in order to define the integral of the Lagrange multiplier against an advected quantity (see Remark \ref{rem:Clebsch_contraint}).
\end{remark}

For Young paths, the extra structure provided by the Gubinelli derivatives is not needed.

\begin{definition}[Controlled paths in the Young case]
We define $\bclD_{Z,T}(E)= C_T^{\alpha}(E)$ if $\alpha\in   \left(\frac{1}{2},1\right]$.
\end{definition}
\begin{remark}
To have a uniform notation for all $\alpha\in \left(\frac{1}{3},1\right]$, we  write   $\bY=Y\in \bclD_{Z,T}(E)$  if $\alpha\in \left(\frac{1}{2}, 1\right]$. We also remark that obviously the controlled space does not depend on $Z$ in this case.
\end{remark}

\subsubsection{The rough chain and product  rule}

Let $E$ and $F$ be Fr\'echet spaces and $C(E;F)$ denote the space of continuous maps. Let $C^1_b(E;F)$ denote the space of bounded functions $\Phi: E\rightarrow F$ such that the limit
$$
D\Phi(e)h=\lim_{\epsilon\rightarrow 0}\frac{\Phi(e+th)-\Phi(e)}{t}
$$
exists for all $e,h\in E$ and  $D\Phi: E\times E\rightarrow F$ is continuous and bounded. We  define  $C^{m}_b(E;F)$ for $m\ge 2$ analogously (see \cite{hamilton1982inverse}[Def.\ 3.1.1\; \& \;Section I.3.6]).  Let $N_{\alpha}=0$ if $\alpha=1$, $N_{\alpha}=1$ if $\alpha\in \left(\frac{1}{2},1\right)$ and $N_{\alpha}=2$ if $\alpha\in \left(\frac{1}{3},\frac{1}{2}\right)$. The following lemma says that controlled rough paths are stable under composition and products. Their proof can be found in Lemma 7.3 and Corollary 7.4 of \cite{friz2014course}.

\begin{lemma}\label{lem:controlled_chain_product}

\

\begin{enumerate}[(i)]
    \item If  $\bY=(Y,Y')\in \bclD_{Z,T}(E)$ and $\Phi \in C_T^{1}(C^{N_{\alpha} }_b(E;F))$, then $\Phi(\bY) = (\phi(Y), D\phi(Y)Y')\in \bclD_{Z,T}(F)$. 
    \item Let $B: F\times E\rightarrow G$ be continuous  and bilinear. If $\bX=(X,X')\in \bclD_{Z,T}(F)$ and $\bY=(Y,Y')\in \bclD_{Z,T}(E)$, then $B(\bX,\bY)=(B(X,Y),B(X',Y)+B(X,Y'))\in \bclD_{Z,T}(G)$.   
\end{enumerate}
\end{lemma}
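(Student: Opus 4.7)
The plan is to verify the two defining properties of a controlled rough path (the Hölder regularity of the path together with its Gubinelli derivative, and the $2\alpha$-regularity of the remainder) separately for each claim. In the Young regime $\alpha\in(\tfrac12,1]$, the controlled space is just $C^\alpha_T$, so both items reduce to routine composition/bilinearity Hölder estimates; I focus the description on the rough regime $\alpha\in(\tfrac13,\tfrac12]$, where $N_\alpha=2$ and Gubinelli derivatives genuinely appear.

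For the chain rule (i), the candidate decomposition is $\Phi(\bY) = (\Phi(Y), D\Phi(Y)Y')$. I would compute the remainder
$$
R^{\Phi(Y)}_{st} \;=\; \delta\Phi(Y)_{st} \;-\; D\Phi_s(Y_s)\,Y'_s\,\delta Z_{st},
$$
by splitting the increment as $\delta\Phi(Y)_{st} = \bigl(\Phi_t(Y_t) - \Phi_s(Y_t)\bigr) + \bigl(\Phi_s(Y_t) - \Phi_s(Y_s)\bigr)$. The first piece is $O(|t-s|)$ uniformly in the space variable because $t\mapsto \Phi_t$ is Lipschitz into $C^{N_\alpha}_b(E;F)$. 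The second piece I expand by a second-order Taylor formula in $y$:
$$
\Phi_s(Y_t) - \Phi_s(Y_s) \;=\; D\Phi_s(Y_s)\,\delta Y_{st} \;+\; O\bigl(|\delta Y_{st}|^2\bigr),
$$
whose remainder is controlled uniformly by $\sup_s\|D^2\Phi_s\|_{C_b}$, finite since $\Phi\in C^1_T(C^2_b)$. Substituting $\delta Y_{st}=Y'_s\,\delta Z_{st}+R^Y_{st}$ and peeling off the purported Gubinelli derivative yields
$$
R^{\Phi(Y)}_{st} \;=\; D\Phi_s(Y_s)\,R^Y_{st} \;+\; O(|\delta Y_{st}|^2) \;+\; O(|t-s|),
$$
and each summand lies in $C^{2\alpha}_{2,T}(F)$ because $R^Y\in C^{2\alpha}_{2,T}$, $\delta Y\in C^\alpha_{2,T}$, and on a bounded interval $|t-s|\lesssim_T |t-s|^{2\alpha}$. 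Hölder regularity of the Gubinelli derivative $D\Phi(Y)Y'\in C^\alpha_T(F^K)$ follows from the Leibniz decomposition, with the $\alpha$-Hölder regularity of $D\Phi(Y)$ obtained by composing the Lipschitz map $D\Phi$ with the $\alpha$-Hölder path $Y$ and accounting for the Lipschitz time-dependence of $\Phi$.

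For the product rule (ii), I would expand by bilinearity
$$
\delta B(X,Y)_{st} \;=\; B(\delta X_{st}, Y_s) \;+\; B(X_s, \delta Y_{st}) \;+\; B(\delta X_{st}, \delta Y_{st}),
$$
then substitute the controlled expansions $\delta X_{st}=X'_s\delta Z_{st}+R^X_{st}$ and $\delta Y_{st}=Y'_s\delta Z_{st}+R^Y_{st}$. Collecting the $\delta Z_{st}$-coefficient produces exactly the claimed Gubinelli derivative $B(X',Y)+B(X,Y')$, and the leftover is
$$
R^{B(X,Y)}_{st} \;=\; B(R^X_{st}, Y_s) \;+\; B(X_s, R^Y_{st}) \;+\; B(\delta X_{st}, \delta Y_{st}),
$$
which lies in $C^{2\alpha}_{2,T}(G)$ by continuity of $B$ combined with $R^X, R^Y\in C^{2\alpha}_{2,T}$ and $\delta X,\delta Y\in C^\alpha_{2,T}$ (so the cross term gains regularity $2\alpha$). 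Hölder regularity of the Gubinelli derivative $B(X',Y)+B(X,Y')\in C^\alpha_T(G^K)$ is immediate from bilinearity and the Hölder regularity of $X,Y,X',Y'$.

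The main technical point I anticipate is the necessity of $C^2$-regularity in (i): a first-order Taylor expansion of a merely $C^1$ map gives only $o(|\delta Y_{st}|)$, which fails to produce the required $|t-s|^{2\alpha}$ decay of the remainder once one needs $2\alpha>\alpha$ sharpness. The boundedness of $D^2\Phi$ promotes the error to $O(|\delta Y_{st}|^2)=O(|t-s|^{2\alpha})$, exactly matching the target regularity. No analogous smoothness hypothesis is needed in (ii) because bilinearity of $B$ linearises everything: the sole nonlinear term $B(\delta X,\delta Y)$ is already quadratic in the increments and therefore automatically of order $|t-s|^{2\alpha}$.
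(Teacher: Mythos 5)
Your argument is correct and is essentially the standard Taylor-expansion proof that the paper delegates to Friz--Hairer (Lemma 7.3 and Corollary 7.4 of \cite{friz2014course}); the only deviation from that reference is that you also track the Lipschitz time-dependence of $\Phi$ (the extra $O(|t-s|)\lesssim_T |t-s|^{2\alpha}$ term), which is needed for the paper's formulation with $\Phi\in C^1_T(C^{N_\alpha}_b)$, and your observation that $N_\alpha=2$ is exactly what is required to push the Taylor remainder to order $|\delta Y_{st}|^2$ is precisely the role it plays in the reference. The one thing worth flagging for a Fr\'echet-valued write-up is that all the bounds (Taylor remainder, boundedness of $B$, uniform boundedness of $Y_s,X_s,Y'_s$) should be run seminorm-by-seminorm against the family $\bclP$, but this is a purely notational adaptation and does not change the structure of the argument.
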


In order to construct the integration theory given above, we have actually not needed the geometric nature of the path (i.e., \eqref{eq:geom_main}). However, to obtain an extension of the ordinary chain and product rule, we require \eqref{eq:geom_main} to hold  (see \cite{friz2014course}[Section 7.5]).

\begin{lemma}\label{lem:product_and_chain}

\

\begin{enumerate}[(i)]
\item For a given, $Y_0\in E$, $\beta\in C_T(E)$, and $(\sigma,\sigma')\in \bclD_{Z,T}(E^K)$, let 
\begin{equation}\label{eq:doubly_controlled}
Y_t=Y_0+\int_0^{t}\beta_r\rmd r+\int_0^t \sigma_r \rmd \bZ_r, \;\; t\in [0,T].
\end{equation}
If $\Phi \in C_T^{1}(C^{N_{\alpha} +1}_b(E;F))$, then  for all $t\in [0,T]$
$$
\Phi_t(Y_t)=\Phi_0(Y_0)+\int_0^{t}\left(\partial_t\Phi_r(Y_r)+D\Phi_r(Y_r)\right)\beta_r \rmd r+\int_0^{t}D\Phi_r(Y_r)\sigma_r \rmd \bZ_r.
$$	
\item For a given, $X_0\in F$, $\tilde{\beta}\in C_T(F)$, and $(\tilde{\sigma},\tilde{\sigma}')\in \bclD_{Z,T}(F^K)$, let 
$$
X_t=X_0+\int_0^{t}\tilde{\beta}_r\rmd r+\int_0^t \tilde{\sigma}_r \rmd \bZ_r, \;\; t\in [0,T].
$$
and $Y$ be as specified in (i). Let $B: F\times E\rightarrow G$ be continuous and  bilinear.  Then for all $t\in [0,T]$,
$$
B(X_t,Y_t)=B(X_0,Y_0)+\int_0^t\left(B(\tilde{\beta}_r,Y_r)+B(X_r,\beta_r)\right)\rmd r + \int_0^t\left(B(\tilde{\sigma}_r,Y_r)+B(X_r,\sigma_r)\right)\rmd \bZ_r.
$$
\end{enumerate}
\end{lemma}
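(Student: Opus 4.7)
The overall plan is to reduce both (i) and (ii) to the uniqueness half of the sewing lemma (Lemma \ref{lem:sewing}): I will Davie-expand the left-hand side to second order, expand the candidate right-hand side through Theorem \ref{thm:rough_int} and Lemma \ref{lem:controlled_chain_product}, and verify that the two local expansions agree up to a remainder of order $|t-s|^{3\alpha}$, which is summable by sewing since $3\alpha>1$. For Young paths $\alpha\in(\tfrac12,1]$ the same scheme collapses to a standard Young-integration statement, so I will concentrate on $\alpha\in(\tfrac13,\tfrac12]$.

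For (i), I would first observe that $Y$ is itself a controlled rough path with Gubinelli derivative $\sigma$, since $\int_0^\cdot \beta_r\,\rmd r$ is Lipschitz (hence contributes trivially to the Gubinelli derivative) and the rough integral in \eqref{eq:doubly_controlled} has Gubinelli derivative $\sigma$ by Theorem \ref{thm:rough_int}. Lemma \ref{lem:controlled_chain_product}(i) then places $(\Phi(Y),D\Phi(Y)\sigma)\in\bclD_{Z,T}(F)$, so that $\int_0^t D\Phi(Y_r)\sigma_r\,\rmd\bZ_r$ is well-defined, and its Gubinelli derivative satisfies
\begin{equation*}
\bigl[D\Phi(Y)\sigma^k\bigr]'_l = D^2\Phi(Y)(\sigma^l,\sigma^k) + D\Phi(Y)(\sigma^k)'_l
\end{equation*}
by the Leibniz rule for Gubinelli derivatives. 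I would then Taylor expand $\Phi$ to second order around $Y_s$, substitute the controlled expansion $\delta Y_{st}=\int_s^t\beta_r\,\rmd r+\sigma_s\delta Z_{st}+\sigma'_s\bbZ_{st}+O(|t-s|^{3\alpha})$, and invoke the geometric identity $\delta Z^k_{st}\delta Z^l_{st}=\bbZ^{kl}_{st}+\bbZ^{lk}_{st}$ together with the symmetry of $D^2\Phi$ to rewrite the quadratic piece purely against $\bbZ$. A term-by-term match with the Davie expansion of the right-hand side followed by Lemma \ref{lem:sewing} closes the argument.

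For (ii), the strategy is entirely algebraic. Bilinearity of $B$ gives
\begin{equation*}
\delta B(X,Y)_{st} = B(\delta X_{st},Y_s) + B(X_s,\delta Y_{st}) + B(\delta X_{st},\delta Y_{st}),
\end{equation*}
and the controlled expansions of $X$ and $Y$ furnish explicit $\delta Z$- and $\bbZ$-coefficients for the first two pieces, while the cross term $B(\delta X,\delta Y)$ produces $B(\tilde\sigma_s^k,\sigma_s^l)\,\delta Z^k_{st}\delta Z^l_{st}$ at leading order. Using $\delta Z^k\delta Z^l=\bbZ^{kl}+\bbZ^{lk}$ and relabeling indices, this combines with the $\bbZ$-terms from the first two pieces to reproduce exactly the Gubinelli derivative $B(\tilde\sigma^l,\sigma^k)+B(\tilde\sigma^k,\sigma^l)$ of the controlled path $B(\tilde\sigma,Y)+B(X,\sigma)$ supplied by Lemma \ref{lem:controlled_chain_product}(ii). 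Invoking the sewing lemma again yields the identity. The main obstacle throughout is precisely this second-order bookkeeping: one must verify that the $\bbZ$-coefficient produced by the geometric relation $\operatorname{Sym}(\bbZ)=\tfrac12(\delta Z)^{\otimes 2}$ cancels against the $\bbZ$-coefficient required by the Gubinelli derivative of the integrand. For (i) this uses the symmetry of $D^2\Phi$; for (ii) it is pure index gymnastics exploiting only bilinearity of $B$. Without \eqref{eq:geom_main} the two sides would genuinely differ, which is precisely the geometric-versus-nongeometric distinction.
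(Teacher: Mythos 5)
The paper states Lemma \ref{lem:product_and_chain} without proof, deferring to \cite{friz2014course}[Section 7.5], so there is no in-paper argument to compare against. Your sketch is correct and reproduces the standard sewing-lemma proof used there: identify the Gubinelli derivative of the integrand via Lemma \ref{lem:controlled_chain_product}, Davie-expand both sides of the claimed identity to second order, and match the $\delta Z$- and $\bbZ$-coefficients, the latter requiring the geometric identity \eqref{eq:geom_main} together with, for (i), symmetry of $D^2\Phi$, and for (ii), the bilinear reshuffle you describe. One small refinement worth making explicit when you write it out: since $\beta\in C_T(E)$ is merely continuous, the germ $\beta_s(t-s)$ approximates $\int_s^t\beta_r\,\rmd r$ only to $o(|t-s|)$, which does not directly fit the exponent-greater-than-one hypothesis of Lemma \ref{lem:sewing}; it is cleaner to keep the exact Riemann integral $\int_s^t D\Phi(Y_r)\beta_r\,\rmd r$ (respectively the analogous bilinear drift integral in (ii)) as part of the germ, since its $\delta_2$-coboundary vanishes identically, so that only the rough-integral contribution needs the $|t-s|^{3\alpha}$ bound and the sewing estimate.
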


In Section \ref{sec:HP_var_princ}, we need the decomposition of paths $Y$ satisfying the relation \eqref{eq:doubly_controlled} to be unique. A  decomposition of a path $Y$ satisfying \eqref{eq:doubly_controlled} is unique if the rough path $\bZ$ is \emph{truly rough} (Theorem 6.5 of \cite{friz2014course}). Examples of truly rough paths include  fractional Brownian motion $B^H$ with $H\in \left(\frac{1}{3},\frac{1}{2}\right]$.

\begin{definition}[Truly rough path]\label{def:truly_rough}
Let $\alpha \in \left(\frac{1}{3},\frac{1}{2}\right]$ and $\bZ\in \bclC_{g,T}^{\alpha}(\bbR^K)$.  We say $\bZ$ is \emph{truly rough} if for all $s$ in a dense set in $[0,T]$,   
$$
\limsup_{t\downarrow s}\frac{|\delta Z_{st} |}{|t-s|^{2\alpha}}=\infty.
$$
\end{definition}
\begin{theorem}\label{thm:truly_rough}
If $\bZ$ is  truly rough and 
$$
Y_t=Y_0+\int_0^{t}\beta_r\rmd r+\int_0^t \sigma_r \rmd \bZ_r=\tilde{Y}_0+\int_0^{t}\tilde{\beta}_r\rmd r+\int_0^t \tilde{\sigma}_r \rmd \bZ_r, \;\; \forall t\in [0,T],
$$
where $Y_0,\tilde{Y}_0\in E$, $\beta, \tilde{\beta}\in C_T(E)$, and $(\sigma,\sigma'),(\tilde{\sigma},\tilde{\sigma}')\in \bclD_{Z,T}(E^K)$, then $\beta \equiv \tilde{\beta}$ and $(\sigma, \sigma')\equiv (\tilde{\sigma}, \tilde{\sigma}')$.
\end{theorem}

\subsubsection{Solutions of  rough differential equations (RDEs)}

We will now introduce the definition of solution to an RDE. Let $V$ denote a Banach space.

\begin{definition}
	Let $u\in C_T(C_b(V;V))$  and  $ \xi \in C^{1}_T(C_b^{N_{\alpha} }(V;V)^K)$. We say $Y$ is a solution of 
	\begin{equation}\label{eq:RDE_diff}
	\textnormal{d}Y_t = u_t(Y_t)\textnormal{d}t + \xi_t(Y_t)\textnormal{d}\bZ_t, \;\; Y_0=v\in V,
	\end{equation}
	on the interval $[0,T]$, if $\bY=(Y,\xi(Y))\in \bclD_{Z,T}(V)$ and  
	\begin{equation}\label{eq:RDE_int}
	Y_t= v+\int_0^{t} u_r(Y_r)\textnormal{d}r + \int_0^{t} \xi_r(Y_r)\textnormal{d}\bZ_r, \quad \forall t\in [0,T].
	\end{equation}
\end{definition}
\begin{remark}\label{rem:soln_RDE_controlled}
	The rough integral in  \eqref{eq:RDE_int} is well-defined by virtue of Lemma \ref{lem:controlled_chain_product}.
\end{remark}
The following lemma concerns equivalent notions of solutions. Its proof is a direct  application of Theorem \ref{thm:rough_int} and \ref{lem:product_and_chain}. The first formulation is referred to as the Davie's formulation  \cite{davie2008differential} and the second  naturally extends to the manifold setting. 
\begin{lemma}\label{lem:soln_RDE}
$Y$ is a solution of \eqref{eq:RDE_diff} on the interval $[0,T]$ if and only if
\begin{enumerate}[(i)]
\item
$
Y^{\natural}_{st}:=\delta Y_{s t}-\int_{s}^{t}u_{r}(Y_{r})\textnormal{dr}-\xi_s(Y_s)(\delta Z_{st})-D\xi_s(Y_s ) \xi_s(Y_s)(\bbZ_{st}), \; (s,t)\in \Delta_T^2,
$
satisfies $Y^{\natural}\in C^{3\alpha}_{2,T}$;
\item 
$
f(Y_t)=f(v)+\int_0^t Df(Y_r)u_r(Y_r)\rmd r+\int_0^t Df(Y_r)\xi_r(Y_r)\rmd \bZ_r, \;\; \forall t\in [0,T], \;\; \forall f\in C_b^{\infty}(V;\bbR).
$
\end{enumerate}
\end{lemma}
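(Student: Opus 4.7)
The plan is to prove the two equivalences separately, each invoking one of the two theorems the authors flagged as the source of the argument: the sewing/integration machinery of Theorem \ref{thm:rough_int} for (i), and the rough chain rule of Lemma \ref{lem:product_and_chain}(i) for (ii). The pivot in both directions is the characterisation of the rough integral against $\bZ$ by its second-order local expansion, together with the uniqueness clause in the sewing Lemma \ref{lem:sewing}.

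For the equivalence ``$Y$ solves \eqref{eq:RDE_int} iff (i) holds'', I would first suppose $Y$ is a solution. By definition, $\bY=(Y,\xi(Y))\in\bclD_{Z,T}(V)$, and by the controlled product/chain rule (Lemma \ref{lem:controlled_chain_product}), the pair $(\xi(Y),D\xi(Y)\xi(Y))$ lies in $\bclD_{Z,T}(V^K)$, so by Theorem \ref{thm:rough_int} the rough integral $\int_0^t\xi_r(Y_r)\rmd\bZ_r$ equals $\clI(\Xi)_t$ for $\Xi_{st}=\xi_s(Y_s)\delta Z_{st}+D\xi_s(Y_s)\xi_s(Y_s)\bbZ_{st}$. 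Since the smooth integral $t\mapsto\int_0^t u_r(Y_r)\rmd r$ is Lipschitz and hence lies in $C^1_T(V)\subset C^{3\alpha}_{2,T}(V)$ at the two-parameter level, rearranging \eqref{eq:RDE_int} and applying the bound $\delta\clI\Xi-\Xi\in C^{3\alpha}_{2,T}$ from Lemma \ref{lem:sewing} yields $Y^\natural\in C^{3\alpha}_{2,T}$. Conversely, if (i) holds, the estimate $|\delta Y_{st}-\xi_s(Y_s)\delta Z_{st}|\lesssim|t-s|^{2\alpha}$ shows $R^Y\in C^{2\alpha}_{2,T}$, so $(Y,\xi(Y))\in\bclD_{Z,T}(V)$; now both $\delta Y_{st}-\int_s^tu_r(Y_r)\rmd r$ and $\int_s^t\xi_r(Y_r)\rmd\bZ_r$ admit the same second-order Davie expansion with $C^{3\alpha}_{2,T}$ remainder, so the uniqueness part of Lemma \ref{lem:sewing} forces them to coincide, giving \eqref{eq:RDE_int}.

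For the equivalence ``$Y$ solves \eqref{eq:RDE_int} iff (ii) holds'', the forward direction is an immediate application of the rough It\^o formula in Lemma \ref{lem:product_and_chain}(i) with $\beta_r=u_r(Y_r)$, $\sigma_r=\xi_r(Y_r)$ and $\Phi=f\in C_b^{N_\alpha+1}(V;\bbR)$. For the converse, one applies (ii) to a family of test functions rich enough to separate elements of $V$: smooth bump-truncated versions of continuous linear functionals $\ell\in V^*$ evaluated at points where $Y_t$ stays bounded. Since $Y$ is continuous on $[0,T]$, its image is relatively compact, so for any fixed interval one can choose $f\equiv\ell$ on a large ball containing the trajectory, making $Df=\ell$ and recovering the vector-valued identity \eqref{eq:RDE_int} weakly; since $V^*$ separates points of $V$, this yields the strong identity. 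The required $\bclD_{Z,T}$-regularity of $\xi(Y)$ to make sense of the right-hand side of (ii) is supplied by Lemma \ref{lem:controlled_chain_product} once one knows $Y$ is controlled, which is established as in the argument for (i).

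The only slightly delicate point, and what I would call the main obstacle, is the converse direction in each bullet: in (i) one must be sure that just the bookkeeping bound on $Y^\natural$ upgrades $Y$ to a controlled rough path with Gubinelli derivative \emph{exactly} $\xi(Y)$ (so that Theorem \ref{thm:rough_int} applies and the sewing uniqueness can be invoked); in (ii) one must pass from scalar tested identities in $C_b^\infty(V;\bbR)$ to the vector-valued integral equation, which is routine in finite dimensions but requires the cutoff/separation argument above in the Banach setting. Both difficulties are standard and do not require new ideas beyond the sewing lemma and the chain rule already established.
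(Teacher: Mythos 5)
Your proposal is correct and follows the same route as the paper: the paper itself dispatches this lemma as ``a direct application of Theorem \ref{thm:rough_int} and Lemma \ref{lem:product_and_chain},'' which is precisely the skeleton you fill in (sewing uniqueness plus controlled chain rule for (i), rough chain rule for (ii), with Lemma \ref{lem:controlled_chain_product} supplying the controlled structure of $\xi(Y)$). Your converse for (i) correctly extracts $R^Y\in C^{2\alpha}_{2,T}$ from the Davie bound and then invokes uniqueness in the sewing lemma, which is how the claimed ``direct application'' actually runs.

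One point worth making explicit that your write-up glosses over: in (ii) the rough integral $\int_0^t Df(Y_r)\xi_r(Y_r)\rmd\bZ_r$ is only defined once a Gubinelli derivative for $Y$ is fixed, so condition (ii) implicitly carries the hypothesis $(Y,\xi(Y))\in\bclD_{Z,T}(V)$ (or alternatively one should invoke true roughness of $\bZ$, cf.\ Definition \ref{def:truly_rough}, to pin the derivative down); saying controlledness ``is established as in the argument for (i)'' is slightly circular since (ii) does not a priori hand you the Davie remainder bound. This does not affect the substance of the argument, but the logical dependence should be stated rather than imported silently.
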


The proof of existence and uniqueness for RDEs uses  Picard iteration in the controlled rough path topology (i.e., \ref{def:controlled_norm}). We refer the reader to, e.g., \cite{friz2014course}[Section 8.5] for a proof. Moreover, in Section \ref{sec:RDE_Euclidean} we given more details about flows on Euclidean spaces.
\begin{theorem}
	There exists a unique continuous  solution map
	\begin{align*}
	S: V \times  C_T(C_b^1(V;V))\times C_T^{1}(C_b^{N_{\alpha}+1 }(V;V)^K)\times \bclC_{g,T}(\bbR^K)&\rightarrow  \bclD_{Z,T}(V)\\
	(v,u,\xi,\bZ)&\mapsto (Y,\xi(Y)).
	\end{align*}
\end{theorem}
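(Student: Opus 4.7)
The plan is to prove existence, uniqueness, and continuous dependence by a contraction argument on the Fréchet space of controlled rough paths, carried out on a short time interval and then concatenated to cover $[0,T]$. Fix $(v,u,\xi,\bZ)$ and define the map
\begin{equation*}
\Phi(\bY)_t := \Bigl(v + \int_0^t u_r(Y_r)\,\rmd r + \int_0^t \xi_r(Y_r)\,\rmd \bZ_r,\; \xi_t(Y_t)\Bigr)
\end{equation*}
on $\bclD_{Z,T_0}(V)$ for a small $T_0 \le T$ to be chosen. By Lemma \ref{lem:controlled_chain_product}(i), $\bY\mapsto \xi(\bY)$ maps controlled paths to controlled paths (this uses $\xi \in C_T^1(C_b^{N_\alpha+1}(V;V)^K)$, one more derivative than for stability of composition, because we will also need smooth estimates to control differences), so the second slot is controlled and Theorem \ref{thm:rough_int} makes the rough integral well-defined as an element of $\bclD_{Z,T_0}(V)$; the drift integral is clearly of class $C^1$ and contributes a path with vanishing Gubinelli derivative.

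First I would establish invariance: choose a closed ball $B_M \subset \bclD_{Z,T_0}(V)$ around the constant path $(v,0)$ of radius $M$ in the seminorm \eqref{def:controlled_norm}, and use the continuity of the integration map in Theorem \ref{thm:rough_int} together with the bounds on $u, \xi, D\xi$ (via $N_\alpha+1$ derivatives bounded) to show that for $T_0$ small enough depending on the norms of $u,\xi$ and on $\|\bZ\|_{\alpha,2\alpha}$, one has $\Phi(B_M)\subset B_M$. The key inequality is of the form
\begin{equation*}
|\Phi(\bY)|_{\bZ} \lesssim |v| + T_0^{1-\alpha}\|u\|_\infty + (T_0^\alpha + T_0^{2\alpha})\bigl(\|\xi\|_{C^{N_\alpha+1}_b} +\|\bZ\|\bigr)(1+|\bY|_{\bZ})^{N_\alpha+1},
\end{equation*}
yielding invariance by choosing $T_0$ small. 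Then I would show $\Phi$ is a contraction on $B_M$: for $\bY^1,\bY^2\in B_M$,
\begin{equation*}
|\Phi(\bY^1)-\Phi(\bY^2)|_{\bZ} \le C\,(T_0^{1-\alpha}+T_0^\alpha+T_0^{2\alpha})\,|\bY^1-\bY^2|_{\bZ},
\end{equation*}
again for $T_0$ small. Banach's fixed point theorem gives a unique $\bY \in B_M$ with $\Phi(\bY)=\bY$, i.e.\ a unique solution on $[0,T_0]$ with the required controlled structure $(Y,\xi(Y))$. A uniqueness argument outside of $B_M$ follows by Gronwall-type estimates on the $a\,priori$ bound $|\bY|_{\bZ}$, which is finite for any solution by definition.

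To extend to $[0,T]$, I would iterate the local construction starting from the endpoint of the previous interval, noting that $T_0$ can be chosen uniformly because $\|u\|_\infty$, $\|\xi\|_{C^{N_\alpha+1}_b}$ and $\|\bZ\|$ are bounded on $[0,T]$; finitely many steps cover $[0,T]$. For the continuity statement, which is the delicate part, I would establish local Lipschitz dependence of the fixed point on the data $(v,u,\xi,\bZ)$ by writing the fixed-point equation for two quadruples of data, subtracting, and using the continuity of the rough integral (Theorem \ref{thm:rough_int}) jointly in integrand and path, together with the $N_\alpha+1$-bounded derivatives of $\xi$ to control differences $\xi_r^1(Y_r^1)-\xi_r^2(Y_r^2)$ and their Gubinelli derivatives in the controlled norm. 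This then transfers to $[0,T]$ by iteration. The main obstacle is the joint continuity of the solution in $\bZ$, since the metric on $\bclC_{g,T}^\alpha$ is nonlinear in $(Z,\bbZ)$ and the Gubinelli derivative of the candidate solution depends on $\bZ$ itself, so the difference of two solutions lives in a space that also varies with $\bZ$; this is handled by a standard trick of comparing controlled rough paths with different reference rough paths through a combined norm, as in \cite{friz2014course}[Section 8.5].
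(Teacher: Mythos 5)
Your proposal is correct and follows essentially the same route the paper itself indicates: the paper does not supply a proof but defers to Picard iteration in the controlled rough path topology, citing \cite{friz2014course}[Section 8.5]. Your sketch is precisely that argument, including the correct observations that one needs the extra derivative in $C_b^{N_\alpha+1}$ for Lipschitz estimates on the composition map, and that continuity in $\bZ$ requires a combined/comparison norm because $\bclD_{Z,T}(V)$ itself depends on $Z$.
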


\subsection{Differential geometry}\label{sec:differential_geometry}

\subsubsection{Basic setting and the Lie derivative}
Let $M$ denote a smooth compact, connected, oriented  $d$-dimensional manifold without boundary.  For an arbitrarily given rank $p$ vector bundle $E$ over $M$, denote by $\Gamma_{C^\infty}(E)$ the space of smooth sections endowed with the Fr\'echet topology defined through a cover of total trivializations of $E$. Denote by $C^\infty=\Gamma_{C^\infty}(\bbR)$ the space of smooth  functions on $M$, $\mfk{X}_{C^\infty}=\Gamma_{C^\infty}(TM)$ the space of smooth vector fields on $M$, $\clT^{lk}_{C^\infty}=\Gamma_{C^\infty}(T^{lk}TM)$  the space of smooth ($l$-contravariant, $k$-covariant) tensor fields on $M$. Denote $\Omega^{k}_{C^\infty}=\Gamma_{C^\infty}(\Lambda^kT^*M)$ as  the space of smooth alternating $k$-forms on $M$. We let $ \operatorname{Dens}_{C^\infty}:=\Omega^d_{C^\infty}$. It is worth remarking that non-orientability and tensor-densities can be easily accommodated  by introducing weighted densities (see, e.g., \cite{spivak1970comprehensive, abraham2012manifolds, vanCraniac2017}). However, we avoid this extension for brevity in the presentation here. 

Denote the  wedge and tensor product  by $\wedge$ and $\otimes$, respectively.  Let  $\mathbf{d}:\Omega^k_{C^{\infty}} \rightarrow \Omega^{k+1}_{C^\infty}$ denote the exterior derivative operator and $\bi_{u}: \Omega^{k}_{C^\infty}\rightarrow \Omega^{k-1}_{C^\infty}$ denote the  interior product operator for an arbitrarily given $u\in\mathfrak{X}_{C^{\infty}}$.
For given  $F\in  \operatorname{Diff}_{C^{\infty}}$ and $\tau\in \clT^{lk}_{C^\infty}$, the push-forward and pull-back are defined by 
\begin{equation}\label{def:push-forward_pullback}
F_{*}\tau=(TF)_*\circ \tau  \circ F^{-1} \quad \textnormal{and} \quad F^*\tau=(F^{-1})_*\tau,
\end{equation}
respectively, where $TF \in C^{\infty}(TM;TM)$ is the tangent map of $F$, which extends to an isomorphism (on fibers) $(TF)_*\in C^{\infty}(T^{lk}TM;T^{lk}TM)$.

For a given time-dependent vector field $u\in C^{\infty}(\bbR\times M;TM)$, let $\eta: \bbR^2\times M\rightarrow M$ denote  the two-parameter smooth flow of diffeomorphisms generated by $u$;\footnote{The time-dependent vector field $u$ may be associated with a time-independent vector field $\bar{u}\in  C^{\infty}(\bbR\times M,T\bbR\times TM)$ on the manifold $T\bbR\times TM$ via $\bar{u}_t(p)=\{u_t(p),1_t\}\in T_t\bbR\times T_pM$ for all $(t,p)\in \bbR\times M$. Thus, the two-parameter flow may be defined in terms of  the one-parameter flow of $\bar{u}$ by  $\eta_{t-s}(x,s)=\{\eta_{ts},t\}$. It follows from $\eta_{st}\circ \eta_{ts}={\rm id}$ that for all $s\in \bbR$
	$$
	\frac{d}{dt}\eta_{st}=-T\eta_{st}\circ u_t=-(\eta_{st})_*u_t\circ \eta_{st}.
	$$
	Equivalently,  for all $f\in C^{\infty}$, $h_{\cdot}=\eta_{\cdot s*}f=(\eta_{s\cdot })^*f\in C^m$ solves the PDE
	$
	\partial_t h_t+\pounds_{u_t}h_t=0.
	$} that is, for all $(s,t)\in \bbR^2$,
$
\eta_{t\theta}\circ \eta_{\theta s}=\eta_{ts},
$ and $\eta_{\cdot s}X$ is the unique integral curve
$$
\dot{\eta}_{ts}X=u_t(\eta_{ts}X), \quad \eta_{ss}X=X\in M.
$$
For  given $u\in C^\infty(\bbR\times M; TM)$ and $t\in \bbR$,  the  Lie derivative $\pounds_{u_t}: \clT^{lk}_{C^\infty}\rightarrow \clT^{lk}_{C^\infty}$ is defined  by 
$$
\pounds_{u_t}\tau=\left. \frac{d}{d\theta} \right|_{\theta=t}(\eta_{\theta t})^*\tau \quad \Leftrightarrow  \quad \frac{d}{dt}\eta_{ts}\tau = \eta_{ts}\pounds_{u_t}\tau.
$$
If $u$ is independent of time, we define
$
\pounds_{u}\tau=\left. \frac{d}{dt} \right|_{t=0}\eta_{t}^*\tau,
$
where $\eta_{t}=\eta_{t0}$ is the corresponding one-parameter flow map.
It is well-known that  the Lie derivative (see, e.g., \cite{holm2009geometric, abraham2012manifolds}) is the unique operator on the tensor algebra $\oplus_{l,k}\clT^{lk}_{C^\infty}$ that i) commutes with tensor contractions, ii) is natural with respect to restrictions, and iii) satisfies for a given local chart $(U,\phi)$ and all $f\in C^{\infty}|_{U}$, $u,v\in \mfk{X}_{C^{\infty}}|_{U}$,
$$
\pounds_{u}f=u[f]=u^i\partial_{x^i}f=\bi_{u}\bd f \quad  \textnormal{and} \quad 
(\pounds_{u} v)^i =u^j\partial_{x^j}v^i - v^j\partial_{x^j}u^i.
$$
It follows that for all $u\in \mfk{X}_{C^\infty}|_{U}$, $\alpha \in \Omega^k_{C^{\infty}}|_{U},$ and $\tau\in \clT^{lk}_{C^{\infty}}|_U$,
\begin{equation}
\begin{aligned}\label{eq:Lie_derivative_local}
(\pounds_{u}\alpha)_{i_1\cdots i_k}&= u^j\partial_{x^j}\alpha_{i_1\cdots i_k} + \alpha_{j\cdots i_k}\partial_{x^{i_1}}u^j +\cdots +\alpha_{i_1\cdots j}\partial_{x^{i_k}}u^j\\
(\pounds_{u}\tau)^{j_1\cdots j_l}_{i_1\cdots i_k}&= u^j_t\partial_{x^j}\tau_{i_1\cdots i_k}^{j_1\cdots j_l}-
\tau_{i_1\cdots i_k}^{j_1\cdots j_l}\partial_{x^{j_1}}u^j -\cdots -\tau_{i_1\cdots i_k}^{j_1\cdots j}\partial_{x^{j_l}}u^j+ \tau_{j\cdots i_k}^{j_1\cdots j_k}\partial_{x^{i_1}}u^j +\cdots +\tau_{i_1\cdots j}^{j_1\cdots j_l}\partial_{x^{i_k}}u^j.
\end{aligned}
\end{equation}
Thus, for given $u\in C^{\infty}(\bbR\times M,TM)$, the Lie derivative $\pounds_{u_t}$ is a first-order differential operator on the bundles $\Lambda^kT^*M$ and $T^{lk}TM$. For non-vanishing $\mu \in \Omega^d_{C^\infty}$, the operator $\operatorname{div}_{\mu}: \mfk{X}_{C^\infty}\rightarrow C^\infty$ is defined  by the relation
$$
\pounds_{u}\mu=(\operatorname{div}_{\mu}u) \mu.
$$
For $u,v\in \mfk{X}_{C^\infty}$ we let $[u,v]=\pounds_{u}v$ and $\operatorname{ad}_{u}v:=-\pounds_{u}v$ and note that 
$$
\left(\pounds_{u}\pounds_{v}-\pounds_{v}\pounds_{u}\right)\tau=\pounds_{[u,v]}\tau=-\pounds_{\operatorname{ad}_{u}v}\tau, \quad \forall \tau\in \clT^{lk}_{C^{\infty}}.
$$
Moreover, for all $u\in  \mfk{X}_{C^{\infty}}$ and $ \alpha\in \ \Omega^k_{C^{\infty}}$, we have  
\begin{equation}\label{eq:Cartan}
\pounds_{u} \alpha=\mathbf{d}(\mathbf{i}_{u} \alpha) + \mathbf{i}_{u}\mathbf{d}\alpha\,,
\end{equation}
which is referred to as \emph{Cartan's formula}.

\subsubsection{Vector bundles: canonical pairings,  adjoints, and function spaces}\label{sec:duals_of_bundles}
For a given vector bundle $E$, denote by $E^*$ the dual bundle. Let $E^{\vee}=E^*\otimes\Lambda^d T^*M$ and we may extend the dual pairing between $E$ and $E^*$ to a mapping $\langle \cdot, \cdot \rangle_E : E^{\vee}\times E\rightarrow \Lambda^d T^*M$. The bundle $E^{\vee}$ is often called the functional dual bundle.  We may then define the canonical pairing  $\langle \cdot, \cdot \rangle_{\Gamma(E)}: \Gamma_{C^\infty}(E^{\vee})\times \Gamma_{C^\infty}(E)\rightarrow \bbR$ by
\begin{equation}\label{def:canonical_pairing}
\langle s', s\rangle_{\Gamma(E)} =\int_{M}\langle s', s\rangle_{E}, \qquad (s',s)\in \Gamma_{C^\infty}(E^{\vee})\times \Gamma_{C^\infty}(E).
\end{equation}
The quantity $\langle s', s\rangle_{E}\in \operatorname{Dens}_{C^\infty}$ in the integrand is a volume form and it is being integrated over the manifold $M$.
The distributional sections  of $E$ and $E^{\vee}$ are defined by  $\Gamma_{\clD'}(E):=\Gamma_{C^\infty}(E^{\vee})^*$ and $\Gamma_{\clD'}(E^{\vee}):=\Gamma_{C^\infty}(E)^*$, respectively. The canonical pairing \eqref{def:canonical_pairing} induces the following dense embeddings:
$$
\Gamma_{C^\infty}(E)\hookrightarrow \Gamma_{\clD'}(E) \;\; \textnormal{via} \;\;   s\mapsto l_{s}=\langle \cdot, s \rangle_{\Gamma(E)}\quad \textnormal{and} \quad 
\Gamma_{C^\infty}(E^{\vee})\hookrightarrow \Gamma_{\clD'}(E^{\vee}) \;\;\textnormal{via}\;\; 
\tilde{s}\mapsto l_{s'}=\langle s', \cdot \rangle_{\Gamma(E)}.
$$
The pairing and definitions of distributions are canonical in the sense that no metric or volume form are needed to define them. We extend the pairing  $\langle \cdot, \cdot\rangle_{\Gamma(E)} $ to $\Gamma_{\clD'}(E^{\vee})\times \Gamma_{C^\infty}(E)$ and  $ \Gamma_{C^\infty}(E^{\vee})\times \Gamma_{\clD'}(E)$ in the usual way. 

The adjoint of a linear differential operator $L:  \Gamma_{C^\infty}(E)\rightarrow  \Gamma_{C^\infty}(E)$, denoted by $L^*: \Gamma_{\clD'}(E^{\vee})\rightarrow \Gamma_{\clD'}(E^{\vee})$, is defined by
\begin{equation}\label{def:adjoint}
\langle L^*s',s\rangle_{\Gamma(E)}=\langle s',Ls\rangle_{\Gamma(E)}, \quad \forall (s',s)\in \Gamma_{\clD'}(E^{\vee})\times \Gamma_{C^\infty}(E).
\end{equation}
It follows that $L^*$ restricts to  $L^*: \Gamma_{C^\infty}(E^{\vee})\rightarrow \Gamma_{C^\infty}(E^{\vee})$, and we write $L=L^{**}:  \Gamma_{\clD'}(E)\rightarrow  \Gamma_{\clD'}(E)$. 

For a normal, local, and invariant Fr\'echet, Banach, or Hilbert function space $\clF$ on $\bbR^d$,\footnote{ Let $\clF$ denote a locally convex topological vector space  of functions $f:\bbR^d\rightarrow \bbR$ such that $C_c^{\infty}(\bbR^d)\hookrightarrow  \clF\hookrightarrow   \clD'(\bbR^d):=C^{\infty}_c(\bbR^d)^*$ and such that pointwise multiplication of functions in $\clF$ by functions in $C_c^{\infty}(\bbR^d)$ is a continuous operation. We say  that a function space $\clF$ is  normal if the embedding $C_c^{\infty}(\bbR^d)\hookrightarrow  \clF$ is dense,  local if $\clF=\{u\in \clD'(\bbR^d): \phi u\in \clF, \;\; \forall \phi \in C_c^{\infty}(\bbR^d)\}$, and invariant if any smooth diffeomorphism $\chi\in \operatorname{Diff}_{C^{\infty}}$ induces an topological isomorphism on $\clF$ via push-forward.} we define a Fr\'echet, Banach, or Hilbert, respectively, of sections $\Gamma_{\clF}(E)$  via a cover of total trivializations.\footnote{A total trivialization 
is a triple $(U,\phi, \psi)$ such that $(U,\phi)$ is a local chart of $M$ and $\psi: E_{U}\rightarrow U\times \bbR^{\operatorname{rank}(E)}$ is a trivialization of $E$ over $U$. Any such trivialization induces an isomorphism $h_{\phi,\psi}: \Gamma_{\clD'}(E|_{U})\rightarrow \Gamma_{\clD'}(\phi(U))$. A section $s\in \Gamma_{\clD'}(E)$ belongs to $\Gamma_{\clF}(E)$ if for every total trivialization $(U,\phi, \psi)$, $h_{\phi,\psi}(s|_{U})\in \clF(\phi(U))^{\operatorname{rank}(E)}$. } It follows that 
$$
\Gamma_{C^\infty}(E)\hookrightarrow  \Gamma_{\clF}(E)\hookrightarrow  \Gamma_{\clD'}(E),
$$ 
where the  embedding $\Gamma_{C^\infty}(E)\hookrightarrow  \Gamma_{\clF}(E)$ is dense. We refer the reader to \cite{vanCraniac2017}[Ch.\ 3] for more details. Exactly the same construction applies to obtain a function space  $\Gamma_{\clF}(E^{\vee})$:
$$
\Gamma_{C^\infty}(E^{\vee})\hookrightarrow  \Gamma_{\clF}(E^{\vee})\hookrightarrow  \Gamma_{\clD'}(E^{\vee}).
$$ 
In the present work, we assume that all  function spaces $\clF$ are normal, local, and invariant. In particular, we let $$\clF=\Gamma_{\clF}(\bbR),\;\; \mfk{X}_{\clF}=\Gamma_{\clF}(TM),\;\;\operatorname{Dens}_{\clF}^{\vee}=\clF,\;\; \clT^{lk}_{\clF}=\Gamma_{\clF}(T^{lk}TM),\;\;\Omega^{k}_{\clF}=\Gamma_{\clF}(\Lambda^kT^*M),$$  $$\clF^{\vee}=\Gamma_{\clF}(\Lambda^{d}T^*M)=\operatorname{Dens}_{\clF}, \;\; \mfk{X}_{\clF}^{\vee}=\Gamma_{\clF}(TM^{\vee}), \quad (\clT^{lk}_{\clF})^{\vee}=\Gamma_{\clF}((T^{lk}TM)^{\vee}), \quad (\Omega^{k}_{\clF})^{\vee}=\Gamma_{\clF}(\Lambda^kT^*M^{\vee}).$$

Any  strong bundle pseudo-metric  $(\cdot, \cdot)_E:E\times E\rightarrow \bbR$  induces an isomorphism $\flat : \Gamma_{\clF}(E)\rightarrow \Gamma_{\clF}(E^*)$ with inverse denoted $\sharp:   \Gamma_{\clF}(E^*)\rightarrow \Gamma_{\clF}(E)$ for an arbitrarily given function space $\clF$.  Moreover, a non-vanishing volume form $\mu \in\operatorname{Dens}_{C^\infty}$ induces an isomorphism
$\operatorname{id}\otimes \mu: \Gamma_{\clF}(E^*)\rightarrow \Gamma_{\clF}(E^{\vee})$ with inverse $\frac{1}{\mu}:\Gamma_{\clF}(E^{\vee})\rightarrow \Gamma_{\clF}(E^*)$. For every $s\in \Gamma_{\clF}(E^*)$, $$\operatorname{id}\otimes \mu(s)=s\otimes \mu.$$ To describe the inverse, note that  for  all densities  $\nu\in  \operatorname{Dens}_{C^{\infty}}$, there exists $\frac{d\nu}{d\mu}\in C^{\infty}$ such that $\nu=\frac{d\nu}{d\mu}\mu$. The inverse is induced by
\begin{equation}\label{eq:one_over_density}
\frac{1}{\mu}(s \otimes \nu)=s\frac{d\nu}{d\mu}, \quad s\otimes \mu \in \Gamma_{\clF}(E^{\vee}).
\end{equation}

Composing these isomorphisms, we obtain an isomorphism $\flat \otimes \mu: \Gamma_{\clF}(E)\rightarrow \Gamma_{\clF}(E^{\vee})$ with inverse $\frac{\sharp}{\mu}: \Gamma_{\clF}(E^{\vee})\rightarrow \Gamma_{\clF}(E)$.  In particular, we may define a pairing $(\cdot, \cdot)_{\Gamma_{L^2}(E)}: \Gamma_{L^2}(E)\times \Gamma_{L^2}(E)\rightarrow \bbR$ by 
$$
(s_1, s_2)_{\Gamma_{L^2}(E)}=\langle\flat \otimes \mu (s_1), s_2\rangle_{\Gamma(E)}=\int_{M}\langle s_1^{\flat},s_2\rangle_{E}\mu = \int_{M}(s_1,s_2)_{E}\mu, \quad s_1,s_2\in \Gamma_{C^\infty}(E),
$$
which may be extended to $\Gamma_{\clD'}(E)\times \Gamma_{C^\infty}(E)$ via the isomorphisms $(\flat\otimes \mu)^*:\Gamma_{\clD'}(E)\rightarrow  \Gamma_{\clD'}(E^{\vee})$. 

If $(\cdot,\cdot)_{E}$ is a metric, we obtain a  Hilbert structure on $\Gamma_{L^2}(E)$, the space of square-integrable equivalence classes of measurable sections. Furthermore, for every $s\in \bbR$, there exists an order $s$ elliptic operator $A$ satisfying $A: \Gamma_{W_2^{s}}(E)\rightarrow \Gamma_{W_2^{s-1}}(E)$ and $\Gamma_{W_2^{s}}(E)\cong A^{-1}\Gamma_{L^2}(E)$, where $W_2^{s}=(I-\Delta)^{-\frac{s}{2}}L^2$ denotes the Bessel-potential spaces, which provides a  Hilbert structure to $\Gamma_{W_2^{s}}(E)$ \cite{Hintz2019, melrose2008microlocal}. Moreover, if  $L: \Gamma_{C^\infty}(E)\rightarrow \Gamma_{C^\infty}(E)$, then 
$L^T_{\flat\otimes \mu}:=\frac{\sharp}{\mu}\circ L^T\circ \flat\otimes \mu :\Gamma_{C^\infty}(E)\rightarrow \Gamma_{C^\infty}(E)$ is the adjoint of $L$ relative to the pairing $( \cdot, \cdot)_{\Gamma_{L^2}(E)}$.

\subsubsection{Riemannian manifolds and the Hodge decomposition}\label{sec:Hodge}
Any Riemannian metric $g$ on $M$ gives rise to a volume form $\mu_g$ defined in a local coordinate chart $(U,\phi)$ by
$$
\mu_g=\sqrt{\operatorname{det}[g_{ij}]}\; dx^1\wedge \cdots\wedge  dx^d.
$$
Furthermore, the metric $g$ extends to bundle metrics on $T^{lk}TM$ and $\Lambda^kT^*M$ in the usual way. In particular, we obtain the diffeomorphisms $\flat, \operatorname{id}\otimes \mu_g$ and $\flat\otimes \mu_g$ discussed in the previous section. For every $k\in \{0,1,\ldots, d\}$, the metric and orientation gives rise to the inner product on $\Omega_{L^2}^k$ defined by
$$
(\alpha,\beta)_{\Omega^k_{L^2}}=\int_{M}g(\alpha,\beta)\mu_g= \int_{M}\alpha \wedge \star \beta, \quad \alpha,\beta\in \Omega_{L^2}^k,
$$
where we have used the Hodge-star diffeomorphism $\star: \Omega_{\clF}^k\rightarrow \Omega_{\clF}^{d-k}$ defined by
$$
\alpha \wedge \star \beta = g(\alpha, \beta)\mu_g, \quad  \forall \alpha,\beta\in \Omega_{\clF}^k.
$$ 
The adjoint of $\bd: \Omega^{k}_{C^\infty}\rightarrow \Omega^{k+1}_{C^\infty}$ with respect to the $L^2$-pairing is given by $\bd^* := (-1)^{dk+1}\star \mathbf{d}\star: \Omega^{k+1}_{\clD'}\rightarrow \Omega^{k}_{\clD'}$.

The Hodge decomposition plays an essential role in incompressible fluids on manifolds. We now briefly describe the decomposition and the canonical pairing we use in the incompressible case.

Let $\Delta_H = \mathbf{d}\bd^* +\bd^* \mathbf{d}: \oplus_{k=0}^{d}\Omega_{\clD'}^{k}\rightarrow \oplus_{k=0}^d\Omega_{\clD'}^{k}$ denote the Hodge Laplacian, which is formally self-adjoint and non-negative with respect to the inner product $\sum_{k=0}^d(\cdot, \cdot)_{\Omega^k}$. Let  
$$\mathcal{H}^k_{\Delta}=\big\{\alpha\in \Omega^k_{C^\infty} : \Delta_H \alpha=0\big\}=\{\alpha\in \Omega^k_{C^\infty} : \bd \alpha=\delta\alpha=0\}
$$ 
denote the finite-dimensional space of harmonic $k$-forms. It follows that harmonic $0$-forms are constant.

Let $\clF$ denote either the smooth  $\clF=C^\infty$,   the Bessel-potential  $\clF=W^{s}_p$, $s\ge 0$, $p\in [1,\infty)$, or the H\"older functions $\clF=C^{m,\alpha}$, $m\ge 0$, $\alpha\in (0,1)$. 
The Hodge decomposition of $\Omega^k_{\clF}$ is given by
\begin{equation}\label{eq:Hodge_decomp_k}
\Omega^k_{\clF}=\clH^k_{\Delta}\oplus \Delta_H G\Omega^k_{\clF}=\mathcal{H}^k_{\Delta}\oplus \bd^*\Omega^{k+1}_{\clF^{+1}}\oplus \mathbf{d}\Omega^{k-1}_{\clF^{+1}},
\end{equation}
where  $G:\Omega^k_{\clF}\rightarrow \Omega^k_{\clF^{+2}}$ satisfies $\Delta_H G \alpha =\alpha -H \alpha$, $H: \Omega^k_{\clF}\rightarrow \mathcal{H}^k_{\Delta}$ is the harmonic projection  \cite{palais1968foundations, warner2013foundations, morrey2009multiple, scott1995, mikulevicius2004stochastic}, and $\clF^{+1}$ and $\clF^{+2}$ are the  one and two-more regular  spaces (in the non-smooth case). That is, $\clF^{+1}=W^{s+1}_p$ and $\clF^{+2}=W^{s+2}_p$, and similarly for H\"older spaces. 

Letting $k=1$ in \ref{eq:Hodge_decomp_k}, applying the diffeomorphism $\sharp: \Omega^1_{\clF}\rightarrow \mfk{X}_{\clF}$, and defining 
$$
\nabla \clF^{+1}:=\sharp \mathbf{d}\clF^{+1},\quad \mfk{X}_{\clF,\mu_g}:=\sharp\clH^k_{\Delta} \oplus \sharp \bd^*\Omega^{2}_{\clF^{+1}}, \quad 
\& \quad  \dot{\mfk{X}}_{\clF,\mu_g}:=\sharp \bd^*\Omega^{2}_{\clF^{+1}},
$$
we obtain an extension of the  Helmholtz decomposition of (possibly non-smooth) vector fields to manifolds: 
\begin{equation}\label{eq:vec_Hodge_decomp}
\mfk{X}_{\clF}=\mfk{X}_{\clF,\mu_g}\oplus\nabla \clF^{+1}= (\clH^1_{\Delta})^{\sharp}\oplus \dot{\mfk{X}}_{\clF,\mu_g} \oplus \nabla \clF^{+1},
\end{equation}
which is an orthogonal decomposition with respect to the inner product $( \cdot, \cdot )_{\mfk{X}_{L^2}}: \mfk{X}_{L^2}\times \mfk{X}_{L^2}\rightarrow \bbR$ defined by
$$
(u, v )_{\mfk{X}_{L^2}} = \int_Mg(u,v)\mu_g, \quad u,v\in \mfk{X}_{L^2}.
$$
Using  $\mathbf{i}_{u} \mu_g= \star u^{\flat}$ and Cartan's formula, we find $\operatorname{div}_{\mu_g} u=-\bd^* u^{\flat}=0$ for all $u\in \mfk{X}_{\clF,\mu_g}$. Thus,   $\mfk{X}_{\clF,\mu_g}$ consists of divergence-free vector fields and $\dot{\mfk{X}}_{\clF,\mu_g}$ consists of harmonic-free and divergence-free vector fields.

Let us recall the canonical pairing  (\ref{def:canonical_pairing}) $\langle \cdot, \cdot\rangle_{\mfk{X}}: \mfk{X}_{C^\infty}^{\vee}\times \mfk{X}_{C^\infty}\rightarrow \bbR$:
$$
\langle \alpha\otimes \mu, u\rangle_{\mfk{X}}=\int_{M}\alpha(u)\mu,
$$
and  diffeomorphism $\flat\otimes \mu_g: \mfk{X}_{C^\infty}\rightarrow \mfk{X}_{C^\infty}^{\vee}$, which satisfies 
$\langle \flat\otimes\mu_g(v), u\rangle_{\mfk{X}}=(v,u)_{\mfk{X}_{L^2}}$ for all $u,v\in \mfk{X}_{C^\infty}$.  Applying the diffeomorphism $\flat\otimes \mu_g$ to \eqref{eq:vec_Hodge_decomp}, we get
$$
\mfk{X}_{C^\infty}^{\vee}= (\operatorname{id}\otimes \mu_g) \Omega^1_{C^\infty}=(\operatorname{id}\otimes \mu_g)\mathcal{H}^1_{\Delta}\oplus (\operatorname{id}\otimes \mu_g)\delta\Omega^{2}_{C^\infty}\oplus (\operatorname{id}\otimes \mu_g)\mathbf{d}C^\infty.
$$
Define the `projection' operators $P:\mfk{X}_{C^\infty}^{\vee}\rightarrow  (\operatorname{id}\otimes \mu_g)\mathcal{H}^1_{\Delta}\oplus (\operatorname{id}\otimes \mu_g)\bd^*\Omega^{2}_{C^\infty}$ and $\dot{P}:\mfk{X}_{C^\infty}^{\vee}\rightarrow  (\operatorname{id}\otimes \mu_g)\bd^*\Omega^{2}_{C^\infty}$, which  act only on the one-form  component.
Clearly, if we restrict the canonical pairing $\langle \cdot, \cdot\rangle_{\mfk{X}}$ to $\mfk{X}_{C^\infty}^{\vee}\times \mfk{X}_{C^\infty,\mu_g}$ and $\mfk{X}_{C^\infty}^{\vee}\times \dot{\mfk{X}}_{C^\infty,\mu_g}$, then the pairing is degenerate; indeed,
$$
\langle \alpha \otimes \mu, u\rangle_{\mfk{X}}=0, \quad \forall u\in \mfk{X}_{C^\infty,\mu_g} \quad \Longrightarrow \quad P(\alpha\otimes \mu)=0,
$$
$$
\langle \alpha \otimes \mu, u\rangle_{\mfk{X}}=0, \quad \forall u\in \dot{\mfk{X}}_{C^\infty,\mu_g} \quad \Longrightarrow \quad \dot{P}(\alpha\otimes \mu)=0.
$$
Notice that the kernel of $P$ is $(\operatorname{id}\otimes \mu_g)\mathbf{d}C^\infty$ and the kernel of $\dot{P}$ is $(\operatorname{id}\otimes \mu_g)\mathcal{H}^1_{\Delta}\oplus (\operatorname{id}\otimes \mu_g)\mathbf{d}C^\infty$. To restore non-degeneracy, we  mod out by the kernel; the following definition is standard \cite{arnold1999topological, khesin2008geometry, khesin2020geometric}.

\begin{definition}\label{def:canonical_dual_incompressible}
Let $
\mfk{X}_{\clF,\mu_g}^{\vee}:=\mfk{X}_{\clF}^{\vee}\big/(\operatorname{id}\otimes \mu_g)\mathbf{d}\clF^{+1}$ and  $ \dot{\mfk{X}}_{\clF,\mu_g}^{\vee}=\mfk{X}_{\clF}^{\vee}\big/(\operatorname{id}\otimes \mu_g)\mathcal{H}^1_{\Delta}\oplus (\operatorname{id}\otimes \mu_g)\mathbf{d}\clF.
$
Moreover, we define the canonical pairings $\langle \cdot, \cdot\rangle_{\mfk{X}_{\mu_g}}: \mfk{X}_{C^{\infty},\mu_g}^{\vee} \times \mfk{X}_{C^{\infty},\mu_g}\rightarrow \bbR$ and $\langle \cdot, \cdot\rangle_{\dot{\mfk{X}}_{\mu_g}}: \dot{\mfk{X}}_{C^{\infty},\mu_g}^{\vee} \otimes \dot{\mfk{X}}_{C^{\infty},\mu_g}\rightarrow \bbR$  by
\begin{equation}\label{eq:incomp_pairing}
\begin{aligned}
\langle [\alpha\otimes \mu],u\rangle_{\mfk{X}_{\mu_g}}&=\langle \alpha\otimes \mu,u\rangle_{\mfk{X}}, \;\; \forall \;([\alpha\otimes \mu],u)\in \mfk{X}_{C^{\infty},\mu_g}^{\vee}\times \mfk{X}_{C^{\infty},\mu_g}\\   \langle [\alpha\otimes \mu],v\rangle_{\dot{\mfk{X}}_{\mu_g}}&=\langle \alpha \otimes\mu ,v\rangle_{\mfk{X}},\;\; \forall \;([\beta\otimes \nu],v)\in \dot{\mfk{X}}_{C^{\infty},\mu_g}^{\vee}\times \dot{\mfk{X}}_{C^{\infty},\mu_g},
\end{aligned}
\end{equation}
where the $[\alpha\otimes \mu]$  denotes an equivalence class with representative $\alpha\otimes \mu$.  It follows that $\flat \otimes \mu_g:\mfk{X}_{\clF,\mu_g}\rightarrow   \mfk{X}_{\clF,\mu_g}^{\vee}$ and  $\flat \otimes \mu_g:\dot{\mfk{X}}_{\clF,\mu_g}\rightarrow   \dot{\mfk{X}}_{\clF,\mu_g}^{\vee}$  are diffeomorphisms.  
\end{definition}

It can easily be checked the definition is well-defined in the sense that the right-hand-sides of  \ref{eq:incomp_pairing} are independent of the representative.  Indeed, for any two given representatives  $\alpha \otimes \mu$ and $\beta \otimes \nu$  of an equivalence class of $\mfk{X}_{\clF,\mu_g}^{\vee}$, we have
$$
P(\alpha\otimes \mu) = P (\beta \otimes \nu) \quad \Longleftrightarrow \quad \alpha\otimes \mu = \beta \otimes \mu + \bd f \otimes  \mu_g \;\; \textnormal{for some} \; f\in \clF^{+1}.
$$ 
and for any two given representatives  $\alpha \otimes \mu$ and $\beta \otimes \nu$  of an equivalence class of $\dot{\mfk{X}}_{\clF,\mu_g}^{\vee}$
$$
\dot{P}(\alpha\otimes \mu) = P (\beta \otimes \nu) \quad \Longleftrightarrow \quad \alpha\otimes \mu = \beta \otimes \mu + (\bd f+c) \otimes \mu_g \;\; \textnormal{for some}  \; f\in \clF^{+1}\; \& \; c\in \mathcal{H}^1_{\Delta}.
$$ 

\section{Auxiliary results}\label{App-auxil-results}

\subsection{Rough flows on Euclidean space}\label{sec:RDE_Euclidean}
\begin{theorem}\label{thm:rough_diffeo} There exists a continuous map 
$$
\operatorname{Flow} : C^{\alpha}_T\left(\mfk{X}_{C^{\infty}_b}(\bbR^d)\right)\times C^{\infty}_T\left(\mfk{X}_{C^{\infty}_b}(\bbR^d)^K\right)\times  \bclC_{g,T}(\bbR^K)\rightarrow C^{\alpha}_{2,T}(\operatorname{Diff}_{C^{\infty}}(\bbR^d))
$$
such that the flow $\eta_{ts}=\operatorname{Flow}(u,\xi,\bZ)_{ts}$, $(s,t)\in [0,T]^2$ satisfies the following properties:
\begin{enumerate}[(i)]
\item for all $(s,\theta,t)\in [0,T]^3$, $\eta_{tt}=\operatorname{Id}$ and  $\eta_{ t\theta }\circ \eta_{\theta s}=\eta_{ts}$;
\item $Y_{\cdot}=\eta_{\cdot s}(X)\in C^{\alpha}([s,T];\bbR^d)$ is the unique solution of
$$\textnormal{d}Y_t = u_t(Y_t)\textnormal{d}t + \xi_t(Y_t)\textnormal{d}\bZ_t, \;\; Y_s=X\in \bbR^d;$$
\item $\eta$ is the unique two-parameter flow satisfying (i) and 
$$
|\eta_{ts}-\mu_{ts}|_{\infty}\le C |t-s|^{3\alpha},\;\; \forall (s,t)\in [0,T]^2,
$$
for a constant $C$, where $\mu\in  C^{\alpha}_{2,T}(\operatorname{Diff}_{C^{\infty}}(\bbR^d))$ is the $C^{\infty}$-\textit{approximate flow} given by
$$\mu_{ts}:=\exp\left(u_{s}(t-s)+\sum_{k=1}^{K}\xi_k(s) \bbZ_{st}^k+\sum_{1\le k<l\le K}[\xi_k(s),\xi_l(s)]\bbA_{st}^{kl}\right),$$ 
or equivalently  by $\mu_{ts}(X):=Y_1$ such that 
$$
\dot{Y}_{\theta}=u_{s}(Y_{\theta})(t-s)+\sum_{k=1}^{K}\xi_k(s)(Y_{\theta}) \delta Z_{st}^k+\sum_{1\le k<l\le K}[\xi_k(s),\xi_l(s)](Y_{\theta})\bbA_{st}^{kl}, \;\;\theta \le 1,\;\; Y_0=X\in \bbR^d;
$$
\item for all $f \in C^{\infty}_b(\bbR^d;\bbR)$ and $s\in [0,T]$,  $\eta=f(\eta_{ \cdot s}^{-1})\in C^{\alpha}([s,T];C^{\infty}(\bbR^d;\bbR))$ satisfies 
$$
\eta_t+\int_s^t \pounds_{u_r}g_r\textnormal{d}r+\int_s^t \pounds_{\xi_r}g_r\textnormal{d}\bZ_r=f;
$$
in $C^{\infty}_b(\bbR^d)$; that is, $(\xi[g],-\xi[\xi[g]])\in \bclD_{\bZ}([s,T];C^{\infty}_b(\bbR^d)).$
\end{enumerate}
\end{theorem}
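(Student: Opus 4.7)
The plan is to construct $\eta_{ts}$ via the $C^\infty$-approximate flow $\mu_{ts}$ written down in (iii) and the flow-sewing lemma of Davie--Bailleul \cite{davie2008differential, bailleul2014flow}. First I would check that $\mu:\Delta_T^2\to\operatorname{Diff}_{C^\infty}(\bbR^d)$ is well-defined with uniform-in-$(s,t)$ $C^\infty$-bounds on $\mu_{ts}$ and $\mu_{ts}^{-1}$: this is routine smooth-dependence on parameters applied to the ODE defining $\mu_{ts}$, whose driving vector field is a linear combination with bounded coefficients $(t-s)$, $\delta Z_{st}^k$, $\bbA_{st}^{kl}$. The crux is the near-multiplicativity estimate
\begin{equation*}
\bigl\|\mu_{t\theta}\circ\mu_{\theta s}-\mu_{ts}\bigr\|_{C^m_b(\bbR^d;\bbR^d)} \le C_m\,|t-s|^{3\alpha}, \qquad (s,\theta,t)\in\Delta_T^3.
\end{equation*}
Taylor-expanding both sides in $|t-s|^\alpha$, the $O(1)$ and $O(|t-s|^\alpha)$ terms match trivially by additivity of $\delta Z$. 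At order $2\alpha$, composition produces cross terms of the form $D\xi_k(s)\xi_l(s)\,\delta Z_{s\theta}^l\delta Z_{\theta t}^k$; Chen's relation \eqref{eq:Chen_rel} rewrites these in terms of $\bbZ$-increments, and the geometricity \eqref{eq:geom_main} splits the result into a symmetric piece that cancels against the $D\xi\,\xi\,\bbZ$-terms in the expansion of $\mu_{ts}$ and an antisymmetric piece that is absorbed exactly by the Lie-bracket correction $[\xi_k(s),\xi_l(s)]\bbA_{st}^{kl}$ that was built into $\mu_{ts}$. All remaining terms carry at least three increment factors and are therefore $O(|t-s|^{3\alpha})$. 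Feeding this into the flow-sewing lemma then produces a unique two-parameter family $\eta_{ts}$ satisfying (i), the characterisation (iii), and $\|\eta_{ts}-\mu_{ts}\|_\infty\le C|t-s|^{3\alpha}$.

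Properties (ii) and (iv) follow essentially for free. For (ii), Taylor-expanding the ODE for $\mu_{ts}(X)$ to order $|t-s|^{2\alpha}$ gives precisely the Davie expansion of an RDE solution in the sense of Lemma \ref{lem:soln_RDE}(i); the $O(|t-s|^{3\alpha})$ discrepancy between $\eta_{ts}(X)$ and $\mu_{ts}(X)$ preserves this characterisation, so $Y_\cdot=\eta_{\cdot s}(X)$ solves the RDE, and uniqueness is the standard Picard argument in controlled-rough-path topology \cite{friz2014course}. Property (iv) follows from the dual characterisation Lemma \ref{lem:soln_RDE}(ii) applied to $Y_\cdot=\eta_{\cdot s}(X)$ with a test function $f\in C^\infty_b(\bbR^d;\bbR)$: substituting $X=\eta_{ts}^{-1}(x)$ yields the stated integral equation for $g=f\circ\eta_{\cdot s}^{-1}$ pointwise in $x$, with the required controlled-rough-path structure of the integrands granted by Lemma \ref{lem:product_and_chain}. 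Continuity of $\operatorname{Flow}$ in all three inputs is inherited from continuity of $(u,\xi,\bZ)\mapsto\mu$ and from continuity of the sewing map.

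The main obstacle will be upgrading $\eta$ to $\operatorname{Diff}_{C^\infty}(\bbR^d)$-valuedness with the required uniform-in-time estimates, i.e., extending the sewing to the full jet cocycle $(D^\beta\mu_{ts})_{|\beta|\le m}$ for each $m\in\bbN$. Concretely, one verifies that the near-multiplicativity estimate survives differentiation with a constant $C_m$ depending only on $\|u\|_{C^\alpha_T(\mfk{X}_{C^{m+2}_b})}$ and $\|\xi\|_{C^\infty_T(\mfk{X}_{C^{m+2}_b}^K)}$; once this is in hand, the uniqueness part of the sewing lemma identifies the sewn limits with the derivatives $D^\beta\eta_{ts}$ via a difference-quotient argument. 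Invertibility in $\operatorname{Diff}_{C^\infty}$ is obtained either by a parallel sewing applied to the time-reversed approximate flow $\mu_{ts}^{-1}$, whose uniqueness forces it to coincide with $\eta_{ts}^{-1}$, or equivalently from the inverse function theorem together with the uniform $C^\infty$-bounds on the Jacobians of $\mu_{ts}$ inherited by $\eta_{ts}$.
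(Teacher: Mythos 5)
Your construction of $\eta$ from the approximate flow $\mu_{ts}$ via near-multiplicativity, Chen's relation, geometricity, and the flow-sewing lemma is exactly the route the paper intends for claims (i)--(iii): the paper does not write this out but defers to Davie's estimates (Corollary 11.14 of Friz--Victoir) and Bailleul's method of approximate flows, and your verification of the $O(|t-s|^{3\alpha})$ multiplicativity defect, including the role of the bracket correction $[\xi_k,\xi_l]\bbA^{kl}_{st}$, is the standard and correct way to fill that in. The same goes for (ii) and for the jet-level sewing needed to land in $\operatorname{Diff}_{C^\infty}$.

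The gap is in claim (iv). You assert that it "follows essentially for free" by applying Lemma \ref{lem:soln_RDE}(ii) to $Y_\cdot=\eta_{\cdot s}(X)$ and substituting $X=\eta_{ts}^{-1}(x)$. That substitution does not produce the stated equation. Lemma \ref{lem:soln_RDE}(ii) yields the \emph{pull-back} identity $\eta_{ts}^*f=f+\int_s^t\eta_{rs}^*(\pounds_{u_r}f)\,\rmd r+\int_s^t\eta_{rs}^*(\pounds_{\xi_r}f)\,\rmd\bZ_r$; evaluating it at $X=\eta_{ts}^{-1}(x)$ turns the integrand into $(\pounds_{u_r}f)(\eta_{rt}x)$, i.e.\ a quantity built from the backward flow $\eta_{rt}$ with \emph{varying terminal time} $t$, not from $g_r=f\circ\eta_{rs}^{-1}=f\circ\eta_{sr}$. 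Claim (iv) is the transport RPDE for the push-forward, and its content is precisely the assertion that $g$ has the controlled structure $(\xi[g],-\xi[\xi[g]])\in\bclD_{\bZ}$, equivalently a $3\alpha$ remainder bound for the two-parameter expansion of $g$ involving third-order Lie derivatives; this cannot be read off from the forward characterisation by composition, and one cannot invoke the Lie chain rule (Theorem \ref{thm:Lie_chain}) to get it either, since that theorem \emph{assumes} a controlled decomposition of the integrand. The paper flags exactly this difficulty and resolves it either by running the approximate-flow machinery for the inverse flow (Bailleul's Theorem 16 / Driver's Theorem 1.27) or by an a priori energy estimate in the framework of unbounded rough drivers, producing $g\in C([s,T];W_2^n)$ for every $n$ and then using Sobolev embedding. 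Your proof needs one of these (or an equivalent backward-flow sewing argument) to establish (iv).
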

\begin{remark}
Claims (i-iii) are a direct extension of Corollary 11.14 of \cite{friz2010multidimensional}; one can easily verify the  Davie's estimates (Corollary 11.14 of \cite{friz2010multidimensional}). We do not impose that our drift coefficient is Lipschitz in time because it is the solution of a rough partial differential equation driven by the path $\bZ$ in our framework, and hence it can only be expected to be $\alpha$-H\"older continuous. We also allow for time dependence in the vector field $\xi$ since this is used in Section \ref{sec:HP_var_princ} to take variations.  Claim (iv) is a minor extension of  Theorem 16 of  \cite{bailleul2014flow,bailleul2019rough} (or Theorem 1.27 of \cite{driver2018global}), which uses the  method of approximate flows. It is possible to weaken the required regularity  in space and time of the coefficients, but  for simplicity, we do not pursue this.

Claim (iv) is the initial-value first-order  linear transport rough partial differential equation (RPDE) for the inverse flow. We understand $g$ as the classical solution in the spatial variable and in the sense of controlled rough paths in time. In \cite{caruana2009partial}[Corollary 8], the method of characteristics solution theory for initial-value RPDEs (in the case $u\equiv 0$) is established and the solutions are  characterized as being a limit point of $g^n=f(X^n_t)$, where $X^n$ is the solution of the time-reversal along a sequence of smooth paths $\bZ^n=(Z^n,\bbZ^n)$ converging to  $\bZ$ in the rough path topology. It is not clear that one can deduce a stronger notion of solution (in the sense of controlled rough paths) from this result in a simple manner (see, also, Remark 2.10 of \cite{diehl2017stochastic}). The works \cite{diehl2017stochastic} and  \cite{bellingeri2020transport}prove the well-posedness of the final-value transport equation and its adjoint, the initial-value continuity equation, in the sense of controlled rough paths. We were not able to find  the exact result in the literature. 

Nevertheless, the solution of the RPDE can be derived using theory of unbounded rough drivers (\cite{bailleul2017unbounded,deya2019priori}), which is analogous to the energy method in deterministic PDE. Indeed, one may first derive a solution $g\in C([s,T];W^{n}_2(\bbR^d))$ under the assumption $u\in C_T(C^{m}(\bbR^d;\bbR^d))$, $\xi\in C^{m+3}(\bbR^d;\bbR^d)$, and $f\in W^{m}_2(\bbR^d)$ for any $m\in \bbN_0$ by adapting Theorem 2 of \cite{hocquet2018energy} and Section 5.2 of \cite{coghi2019rough}. Then one may obtain a solution $u\in C^{\alpha}_T(C^{\infty}(\bbR^d;\bbR))$  by applying the Sobolev embedding. Finally, one can apply the pull-back version of the Lie chain rule Theorem \ref{thm:Lie_chain} to show that $g(\eta_{\cdot s})=f$.
\end{remark}

\subsection{Rough Fubini's theorem}
Let $T>0$,  $\alpha\in \left(\frac{1}{3},1\right]$, and  $\bZ\in \bclC^{\alpha}_{g,T}(\bbR^K)$.  By virtue of the fact that rough integration is a linear continuous map, we can easily obtain a version of Fubini's theorem.  Let $(X,\clA,\mu)$ be a $\sigma$-finite measured space and $W$ be a Banach space. Denote by $L^1(X;W)$ the Banach space of  equivalent classes of Bochner integrable functions $f:X\rightarrow W$ endowed with the norm
$$
|f|_{L^1(X;W)}=\int_{X}|f|_Vd\mu, \; \; f\in L^1(X;W).
$$
Recall that for an arbitrary Banach space  $V$ and linear map $L\in \clL(W,V)$,
\begin{equation}\label{eq:bounded_op_integral}
L\int_{X}fd\mu = \int_{X}Lfd\mu, \;\;\; \forall f\in L^1(X;W).
\end{equation}
The following lemma is then a straightforward application of  \eqref{eq:bounded_op_integral}, Theorem \ref{thm:rough_int}, and
$$
L^1(X; \bclD_{Z,T}^{\alpha}(V^K))\subset \bclD_{Z,T}(L^1(X;V)^K),
$$
which itself follows from  Fatou's lemma.
\begin{lemma}[Rough Fubini] \label{lem:Fubini}
If $\bF=(F,F')\in L^1(X; \bclD_{Z,T}^{\alpha}(V^K))$, then for all $(s,t)\in \Delta_T^2$,
$$
\int_{X}\int_s^{t} F_r\rmd \bZ_rd\mu  =\int_s^{t} \int_{X} F_rd\mu \rmd \bZ_r.
$$
\end{lemma}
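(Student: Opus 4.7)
The plan is to exploit the fact that, for each fixed $(s,t) \in \Delta_T^2$, the rough integration operator
$$
\bL_{s,t}: \bclD_{Z,T}^{\alpha}(V^K) \to V, \qquad \bY \mapsto \int_s^t Y_r\,\rmd\bZ_r,
$$
is continuous and linear by Theorem \ref{thm:rough_int}, and then apply the commutation identity \eqref{eq:bounded_op_integral} of bounded linear operators with Bochner integration. Granted this, we obtain
$$
\bL_{s,t}\!\left(\int_X \bF(\omega)\,d\mu(\omega)\right) \;=\; \int_X \bL_{s,t}(\bF(\omega))\,d\mu(\omega),
$$
which, after unpacking both sides, is precisely the claimed Fubini identity.

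The key technical step is therefore to verify the inclusion mentioned in the text, i.e.\ that the Bochner integral $\int_X \bF(\omega)\,d\mu(\omega)$ is a well-defined element of $\bclD_{Z,T}^{\alpha}(V^K)$ and equals the pair $\bigl(\int_X F_r(\omega)\,d\mu,\; \int_X F'_r(\omega)\,d\mu\bigr)$. To this end, first define $G_r := \int_X F_r(\omega)\,d\mu(\omega)$ and $G'_r := \int_X F'_r(\omega)\,d\mu(\omega)$ as pointwise-in-$r$ Bochner integrals in $V^K$ and $V^{K\times K}$ respectively; these are legitimate because $|F_r(\omega)| + |F'_r(\omega)| \lesssim |\bF(\omega)|_{\bZ}$ uniformly in $r$. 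Linearity of Bochner integration gives $\delta G'_{st} = \int_X \delta F'(\omega)_{st}\,d\mu$ and $R^G_{st} = \int_X R^{F(\omega)}_{st}\,d\mu$. Applying the Bochner inequality and then commuting the $\mu$-integral with the supremum over $(s,t)$ by Fatou/Tonelli yields
$$
[G']_{\alpha} \;\leq\; \int_X [F'(\omega)]_{\alpha}\,d\mu(\omega), \qquad [R^G]_{2\alpha} \;\leq\; \int_X [R^{F(\omega)}]_{2\alpha}\,d\mu(\omega),
$$
both finite because the right-hand-sides are bounded by $\int_X |\bF(\omega)|_{\bZ}\,d\mu < \infty$. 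Hence $(G,G')\in\bclD_{Z,T}^{\alpha}(V^K)$, and by the analogous commutation of Bochner integration with evaluation at time $r$, $(G,G') = \int_X \bF(\omega)\,d\mu(\omega)$.

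Unpacking the two sides of the operator identity then gives $\bL_{s,t}(G,G') = \int_s^t\bigl(\int_X F_r(\omega)\,d\mu(\omega)\bigr)\rmd\bZ_r$ on the left, and $\int_X \int_s^t F_r(\omega)\,\rmd\bZ_r\,d\mu(\omega)$ on the right, completing the proof. The main obstacle in the whole argument is the Fatou/Tonelli step used to pull the $\mu$-integral through the supremum defining the controlled-path seminorm \eqref{def:controlled_norm}; this is standard but is the only genuinely non-algebraic input. Everything else is formal from linearity of the rough integral, linearity of the Bochner integral, and the general commutation fact \eqref{eq:bounded_op_integral}.
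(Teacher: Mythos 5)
Your proof is correct, and it rests on the same two core facts the paper invokes — the continuity and linearity of the rough integral from Theorem \ref{thm:rough_int}, and the commutation of bounded linear maps with Bochner integration in \eqref{eq:bounded_op_integral}. The difference is in which of the two linear operators you apply to the Bochner integral first. The paper reinterprets $\bF$ as a controlled rough path with values in $L^1(X;V)^K$ via the inclusion $L^1(X;\bclD_{Z,T}^{\alpha}(V^K))\subset \bclD_{Z,T}(L^1(X;V)^K)$ (established via Fatou), computes the rough integral in $L^1(X;V)$, and then applies the $\mu$-integral as a bounded operator $L^1(X;V)\to V$. You instead Bochner-integrate $\bF$ directly in the Banach space $\bclD_{Z,T}^{\alpha}(V^K)$ and then apply the bounded linear operator $\bL_{s,t}:\bclD_{Z,T}^{\alpha}(V^K)\to V$. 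These are dual compositions of the same commutation diagram. Your version has the small advantage of never needing to introduce $L^1(X;V)$-valued controlled paths. One simplification worth noting: once $\bF\in L^1(X;\bclD_{Z,T}^{\alpha}(V^K))$, the membership $\int_X\bF\,d\mu\in\bclD_{Z,T}^{\alpha}(V^K)$ is automatic from the very definition of the Bochner integral, so your explicit seminorm estimates $[G']_{\alpha}\leq\int_X[F'(\omega)]_{\alpha}\,d\mu$ and $[R^G]_{2\alpha}\leq\int_X[R^{F(\omega)}]_{2\alpha}\,d\mu$ are not strictly needed — all that remains is to identify the pointwise components of the Bochner integral, which you correctly do by commuting evaluation at $r$ (a bounded linear map) with $\int_X$ via \eqref{eq:bounded_op_integral}. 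Also, what you call Fatou/Tonelli in that step is just the elementary inequality $\sup_{(s,t)}\int_X(\cdot)\,d\mu\leq\int_X\sup_{(s,t)}(\cdot)\,d\mu$; invoking Fatou is harmless but heavier than necessary.
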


\subsection{Fundamental lemma of the calculus of rough variations}\label{sec:FLCRC}
Let $T>0$,  $\alpha\in \left(\frac{1}{3},1\right]$, and  $\bZ\in \bclC^{\alpha}_{g,T}(\bbR^K)$. 

\begin{lemma}\label{lem:fund_calc_var}
Assume that $\bY=(Y,Y') \in \bclD_{Z,T}(\bbR^K)$ and $\lambda \in C_T(\bbR)$ satisfy
\begin{equation} \label{tested against phi}
\int_a^b \lambda_t \dot{\phi}_t \rmd t = \int_a^b \phi_t Y_t\rmd \bZ_t
\end{equation}
for all $\phi \in C^{\infty}_T(\bbR)$ such that $\phi_{0}=\phi_T = 0$.  Then for all $(s,t) \in \Delta_T^2$,
\begin{equation} \label{equality for all times}
\delta\lambda_{st}  = \int_s^t Y_r \rmd \bZ_r.
\end{equation}
\end{lemma}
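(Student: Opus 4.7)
The plan is to reduce the weak rough-integral identity to an ordinary distributional statement by means of rough integration by parts, and then invoke the classical scalar-valued fundamental lemma of the calculus of variations.

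First I would introduce the candidate primitive
$$
F_t := \int_0^t Y_r\,\rmd\bZ_r,
$$
which by Theorem~\ref{thm:rough_int} is well-defined, lies in $C^\alpha_T(\bbR)$, and admits the controlled decomposition $(F,Y)\in\bclD_{Z,T}(\bbR)$ with rough differential $\rmd F_t = Y_t\,\rmd\bZ_t$. Since $F$ is the only reasonable candidate for a primitive, the task reduces to showing that $\lambda-F$ is constant (up to the sign fixed by the convention in the hypothesis).

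Next, I would apply the rough product rule from Lemma~\ref{lem:product_and_chain}(ii) to $\phi_t F_t$, taking scalar multiplication as the bilinear map, $X_t=\phi_t$ (whose drift is $\dot\phi_t$ and whose rough component vanishes) and second factor $F_t$. Using $\phi_0=\phi_T=0$, this yields the integration-by-parts identity
$$
\int_0^T \phi_t Y_t\,\rmd\bZ_t \;=\; -\int_0^T \dot\phi_t F_t\,\rmd t,
$$
which converts the rough integral on the right-hand side of the hypothesis into an ordinary Lebesgue integral against $\dot\phi$.

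Substituting this back into the hypothesis produces
$$
\int_0^T \bigl(\lambda_t+F_t\bigr)\dot\phi_t\,\rmd t = 0
$$
for every $\phi\in C^\infty_T(\bbR)$ with $\phi_0=\phi_T=0$. The classical scalar-valued fundamental lemma of the calculus of variations then forces $\lambda+F$ to be constant on $[0,T]$, and taking the difference at $s\le t$ yields the claimed formula $\delta\lambda_{st}=\int_s^t Y_r\,\rmd\bZ_r$ (modulo the sign dictated by the convention in the hypothesis). There is no substantial obstacle here; the only technical point is checking that $\phi F$ lies in the class required by the product rule, which is immediate because $\phi$ is smooth in time and $F$ is controlled by $\bZ$.
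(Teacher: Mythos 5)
Your proof is correct and takes a genuinely different route from the paper's. The paper extends the test class from smooth to Lipschitz functions by mollification, then approximates the indicator $\mathbf{1}_{[s,t]}$ by piecewise-linear cutoffs $\phi^n$ and passes to the limit on both sides of \eqref{tested against phi}, which requires explicit estimates on the rough integral near the endpoints of $[s,t]$ in terms of $[Z]_\alpha$, $[\bbZ]_{2\alpha}$, and $|\phi^n\bY|_{\bZ}$. Your argument instead eliminates the rough integral at the outset: applying the rough product rule (Lemma~\ref{lem:product_and_chain}(ii)) to $\phi\,F$ with $F_\cdot = \int_0^\cdot Y\,\rmd\bZ$ gives $\int_0^T \phi\,Y\,\rmd\bZ = -\int_0^T \dot\phi\,F\,\rmd t$, after which the hypothesis becomes $\int_0^T(\lambda+F)\dot\phi\,\rmd t = 0$ and the problem is closed by the classical du~Bois--Reymond lemma applied to the continuous function $\lambda+F$. (What you need is the du~Bois--Reymond variant, not the fundamental lemma in its usual form, since you test against $\dot\phi$ rather than $\phi$; the conclusion is that $\lambda+F$ is constant, which is exactly what you use.) This replaces the paper's quantitative cutoff estimates with a single soft application of the product rule, localizing all the rough-path content in the construction of $F$. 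Both derivations in fact yield $\delta\lambda_{st} = -\int_s^t Y_r\,\rmd\bZ_r$ rather than the sign in \eqref{equality for all times}; you correctly flag this, and the paper's own Step~2 has the same discrepancy ($n\int_{s-1/n}^s\lambda - n\int_t^{t+1/n}\lambda \to \lambda_s - \lambda_t$, not $\lambda_t-\lambda_s$), so the sign in the lemma's statement appears to be a typo rather than a defect of either proof.
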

\begin{remark}
On the right-hand-side of \eqref{tested against phi}, we have  used that $(\phi,0)\in \bclD_{Z,T}(\bbR)$, and thus that   $\phi \bY = (\phi Y, \phi Y')\in \bclD_{Z,T}(\bbR^K)$ by  Lemma \ref{lem:controlled_chain_product}.
\end{remark}
\begin{proof}
\textbf{Step 1.}
We will begin by showing that equality \eqref{tested against phi} must hold for any Lipschitz $\phi\in C^1_T(\bbR)$ such that $\phi_0  = \phi_T = 0$, where $\dot{\phi}$ on the left hand side is the bounded weak derivative (which exists by Rademacher's theorem \cite{heinonen2012lectures}[Theorem 6.15]).  Consider a  mollifier on $\bbR$ defined by $\rho_n(\theta ) := n \rho(n\theta )$, $n\in \bbN$,  where $\int_{\bbR} \rho(\theta) \rmd \theta  = 1$ and  $\operatorname{supp}\rho\subset [0,T]$. Because $\phi$ vanishes at the end points, we can extend $\phi$ by zero
$$
\tilde{\phi}_t = \left\{
\begin{array}{ll}
\phi_t & t \in [0,T] \\
0 & t \notin [0,T],\\
\end{array}
\right.
$$
and note that $\tilde{\phi}\in C^1_T(\bbR)$ has the same Lipschitz constant as $\phi$. For a given  $n\in\bbN$, define 
$$
\phi^n_t : = \tilde{\phi} \ast \rho_n(t) = \int_{\bbR} \tilde{\phi}_{t-\theta } \rho_n(\theta ) d\theta = \int_a^b \phi_\theta \rho_n(t - \theta) d\theta,\; \; t\in \bbR,
$$
which is clearly bounded in $C_T(\bbR)$. 
For all $n\in \bbN$ and $s,t\in [0,T]$, we find
$$
\left| \phi^n_t - \tilde{\phi}_t \right|   
= \left| \int_{\bbR} (\tilde{\phi}_{t - \theta} - \tilde{\phi}_t) \rho_n(\theta) d\theta  \right|  \lesssim \int_{\bbR} |\theta | \rho_n(\theta) d\theta = n^{-1} \int_{\bbR} |\theta| \rho(\theta) d\theta
$$
and
$$
|\delta \phi^n_{st}| = \int_{\bbR} (\tilde{\phi}_{t-\theta} - \tilde{\phi}_{s-\theta}) \rho_n(\theta) d\theta \lesssim |t-s|  \int_{\bbR}  \rho_n(\theta) d\theta = |t-s|.
$$
By Arzela-Ascoli's theorem, $\phi^n \rightarrow \phi$ uniformly, and, in fact, in $C^{\beta}_T(\bbR)$ for all $\beta < 1$. A classical argument shows that $$\lim_{n \rightarrow \infty} \int_0^T \dot{\phi}^n_t \lambda_t \rmd t=  \int_0^T \dot{\phi}_t \lambda_t \rmd t.$$
For fixed $\bY  \in \bclD_{Z,T}(\bbR^K)$, the mapping $\psi \mapsto \psi \bY := (\psi Y, \psi Y')$ is a linear and continuous operation from $C^{\beta}_T(\bbR)$ to $\bclD_{Z,T}(\bbR^K)$ for all $\beta \geq 2 \alpha$. Moreover,
\begin{equation} \label{multiplicator on controlled}
|\psi \bY|_{\bZ} \leq |\psi|_{\beta} |\bY|_{\bZ}( |Y|_{\infty} + |Y'|_{\infty}).
\end{equation}
Thus, by the continuity of the rough path integral (Theorem \ref{thm:rough_int}), we obtain 
$$\lim_{n \rightarrow \infty} \int_a^b \phi_t^n Y_t \rmd \bZ_t= \int_a^b \phi_t Y_t \rmd \bZ_t,
$$
which  completes  step 1. 

\bigskip

\textbf{Step 2.}
We will now construct a sequence of Lipschitz functions $\{\phi^n\}_{n\in \bbN}\subset C^1_T(\bbR)$ converging to the characteristic function $ \mathbf{1}_{[s,t]}$, for $s,t\in \bbR$ such that  $0<s<t<T $, and then pass to the limit  on both sides of \eqref{tested against phi} to obtain \eqref{equality for all times}. We then extend the equality to $(s,t)\in \Delta_T^2$ by continuity.

Towards this end, for large enough  $n\in \bbN$ and $r\in [0,T]$, define  
$$
\phi^n_r = \left\{ 
\begin{array}{ll}
1 & r \in [s,t] \\
n(r - s) +1 & s \in [s - n^{-1},s] \\
n(t - r) +1 & s \in [t,t+ n^{-1}] \\
0 & \textrm{ otherwise}, \\
\end{array}
\right.
$$
so that $| \phi^n |_{\infty} =1$ and $| \dot{\phi}^n |_{\infty} = n$ where $\dot{\phi}^n$ is the weak derivative defined by
$$
\dot{\phi}^n_r = \left\{ 
\begin{array}{ll}
n & r \in [s - n^{-1},s] \\
-n & r \in [t,t+ n^{-1}] \\
0 & \textrm{ otherwise}. \\
\end{array}
\right.
$$
A classical argument shows that $$\lim_{n\rightarrow \infty}\int_0^T \lambda_r \dot{\phi}^n_r\rmd r = \delta \lambda_{st}.$$
Since the rough integral is an increment, we have
$$
\int_0^T\phi^n_r Y_r \rmd \bZ_r = \int_{s - n^{-1}}^s \phi^n_r Y_r \rmd \bZ_r + \int_s^{t}  Y_r \rmd \bZ_r +\int_{t}^{t + n^{-1}} \phi^n_r Y_r \rmd \bZ_r.
$$
If we can show that the first and last integrals converge to zero as $n \rightarrow \infty$, then we are finished. We will only show that the last term converges to zero because the argument for the  first integral is easier. Let $C$ denote a constant that is independent of $n$ and may vary from line to line. By Theorem \ref{thm:rough_int} and the fact that  $| \phi^n \bY |_{\bZ} \le C n$  by \eqref{multiplicator on controlled}, we find 
$$
\int_{t}^{t + n^{-1}} \phi^n_r \bY_r \rmd \bZ_r = \phi^n_{t} Y_{t}\delta Z_{t, t + n^{-1}} + \phi^n_{t} Y_{t}' \bbZ_{t, t + n^{-1}} + R^{n}\left(t, t + n^{-1}\right),
$$
where 
$$
|R^{n}(t, t+ n^{-1})| \le C (([Z]_{\alpha} + [\bbZ]_{2\alpha}) |\phi^n \bY| |t + n^{-1} - t|^{3 \alpha} \le C n^{1 - 3 \alpha } \rightarrow 0.
$$
as  $n\rightarrow \infty$. Moreover, 
\begin{align*}
\left| \phi^n_{t} Y_{t}\delta  Z_{t, t + n^{-1}} + \phi^n_{t} Y_{t}' \bbZ_{t, t + n^{-1}} \right| \leq  |Y|_{\infty} [Z]_{\alpha} n^{-\alpha}  +|Y'|_{\infty} [\bbZ]_{2 \alpha} n^{-2\alpha} \rightarrow 0,
\end{align*}
as  $n\rightarrow \infty$, which completes the proof.
\end{proof}

\section{The variational principle for  incompressible fluids on smooth paths}\label{sec:var_smooth_paths}

The purpose of this section is to explain the variational principles we formulate in this paper in the simplified setting of an incompressible homogeneous ideal fluid evolving on the the torus with a smooth perturbation. The beginning of the section can be read with  no knowledge of differential geometry. The rest of the section assumes some basic knowledge of differential geometry (see Section \ref{sec:differential_geometry}). 

We also explain the presence of the so-called line-element stretching term in our main equation. The presence of this term distinguishes our equations from a pure transport perturbation of the deterministic Euler equation on flat space in velocity form. In particular, we show that the stretching term arises as a direct consequence of the variational principle and not by how momentum is characterized; that is to say, our variational principle indirectly enforces a covariant formulation, which naturally leads to a Kelvin's circulation theorem, helicity conservation in dimension three, and enstrophy conservation in dimension two. 

By appealing to the Helmholtz decomposition (Hodge decomposition), we explicitly show the decomposition of the pressure terms into the unperturbed and perturbed part, which motivates the corresponding decomposition in the rough case. As a result of the presence of the  stretching term, our equations do not preserve  mean-freeness (i.e., harmonic-freeness) unless we impose an additional constraint in the variational principle. By imposing this constraint, the velocity $u$ can be recovered directly from the vorticity  $\tilde{\omega}=\nabla \times u$ via the Biot-Savart law. In vorticity form, our equations are a pure transport perturbation of the deterministic Euler equation in dimension two. The associated vorticity equation in the  Brownian setting has been studied   in the literature  with $u$ recovered directly from the vorticity $\omega$ via Biot-Savart \cite{brzezniak2016existence, crisan2019solution,crisan2019well,brzezniak2019existence}.

We consider an incompressible homogeneous fluid moving on the flat $d$-dimensional torus $\bbT^d$ with the standard  volume form $dV$. Denote by $\mfk{X}$ the space of smooth vector fields, $\mfk{X}_{dV}$   the space of smooth divergence-free vector fields and   $\dot{\mfk{X}}_{dV}$  the  space of smooth divergence and mean-free vector fields. It follows that 
$$
\mfk{X}=\mfk{X}_{dV}\oplus \nabla C^\infty =\dot{\mfk{X}}_{dV}\oplus \bbR^d \oplus \nabla C^\infty,
$$
where the decomposition is orthogonal with respect to the $L^2$-inner product. Let  $P:\mfk{X}\rightarrow\mfk{X}_{dV}$, $Q: \mfk{X}\rightarrow \nabla C^\infty$,  $\dot{P}: \mfk{X}\rightarrow \dot{\mfk{X}}_{dV}$, and $H:\mfk{X}\rightarrow \bbR^d$ denote the corresponding projections (see Section \ref{sec:Hodge} and \eqref{eq:vec_Hodge_decomp}). We recall that in dimension three, $\operatorname{curl}:\dot{\mfk{X}}_{dV}\rightarrow \dot{\mfk{X}}_{dV}$ is an isomorphism, and in dimension two, $\operatorname{curl}:\dot{\mfk{X}}_{dV}\rightarrow C^\infty$ is an isomorphism. Denote the inverse of $\operatorname{curl}$ by $\operatorname{BS}$ (for Biot-Savart).

We assume that the Eulerian velocity field $v:[0,T]\rightarrow \mfk{X}_{dV}$  of the fluid admits a decomposition into a sum of a dynamical velocity variable $u:[0,T]\rightarrow\dot{\mfk{X}}_{dV}$ and a \emph{known} model vector field $\zeta:[0,T]\rightarrow \mfk{X}_{dV}$:
\begin{equation}
v_t = u_t + \zeta_t, 
\label{eq:model-decomp}
\end{equation}
where the  vector field $\xi$ admits the specified decomposition
$$
\zeta_t(x)=\xi(x) \dot{Z}_t=\sum_{k=1}^K\xi_k(x)\dot{Z}^k_t, \quad (t,x)\in [0,T]\times \bbT^d,
$$
where $\xi \in \dot{\mfk{X}}_{dV}^K$  and $Z:[0,T]\rightarrow \bbR^K$ in this appendix is a differentiable path, as opposed to the rough paths in the main text.  

\paragraph{Review of geometric ideal incompressible fluid dynamics.}
In ideal incompressible fluid dynamics, the fluid flow is obtained as a smooth, time-dependent volume-preserving diffeomorphism   $\eta: [0,T]\times \bbT^d\rightarrow \bbT^d$ by integrating the velocity vector field 
$$
\dot{\eta}_t= v_t\circ \eta_t=u_t\circ \eta_t + \xi_t\circ \eta_t, \;\; \eta_0=\operatorname{id}.
$$
In fact, $\eta$ may be regarded as a curve in the group of volume-preserving diffeomorphisms on $M$, denote by
$G=\operatorname{Diff}_{dV}(\bbT^d)$ and endowed with some appropriate topology. The Lagrangian, or material, velocity, is the velocity of the particle labeled by $X\in \bbT^d$ at time $t$. The Lagrangian velocity is given by $U_t(X)=\dot{\eta}_tX=v_t(\eta_tX)$; that is, $U=v\circ \eta$.
The Eulerian velocity, which is the velocity of the particle currently in position $x\in \bbT^d$ at time $t$ (i.e., $x=x(X,t)=\eta_tX$), can be expressed as
$$
v_t(x)=U_t(X)=U_t(\eta_t^{-1}x) \quad \textnormal{or} \quad v_t =\dot{\eta}_t \eta_t^{-1}=T_{\eta_t}R_{\eta_t^{-1}}\dot{\eta}_t,
$$
where the notation in the right-most expression  is the right action (technically the tangent lift of the action) of the inverse map $\eta_t^{-1}$ on the tangent vector $\dot{\eta}_t\in T_{\eta_t}G$  by the inverse map $\eta_t^{-1}$.  The action by the inverse map translates the tangent vector $\dot{\eta}_t$ at $\eta_t$ back to the identity $\mfk{g}=T_{\rm id}G\cong \mfk{X}_R(G)\cong\mfk{X}_{dV}$ (the space of divergence-free vector fields).   It follows that $v_t=\dot{\eta}_t \eta_t^{-1}$ is invariant under the action of the diffeomorphisms from the right given by $\eta_t\to \eta_t h$ for any fixed diffeomorphism $h\in \operatorname{Diff}_{dV}$. This symmetry  corresponds to the well-known invariance of the Eulerian fluid velocity vector field $v_t$ under relabelling of the Lagrangian coordinates as $X\to hX$. As discussed in Section \ref{sec:Kelvin_circ_thm}, right-invariance is the key to understanding the Kelvin circulation theorem from the viewpoint of Noether's theorem.  

\paragraph{Clebsch constrained variational principle.}
In order to derive an equation for $u$, we will apply a Clebsch constrained variational principle.  For arbitrary $u:[0,T]\rightarrow \dot{\mfk{X}}_{dV}$ and $\lambda, a:[0,T]\times \bbT^d\rightarrow \bbR^d$, we define
\begin{equation}
S(u,a,\lambda)=\int_0^T\int_{\bbT^d}\left[\frac{1}{2}|u_t|^2+\sum_{q=1}^d \lambda_t^q\left(\partial_t a_t^a+(v_t\cdot\nabla)a_t^q \right) \right]dV \rmd t.
\label{Clebsch-action}
\end{equation}
The history of the Clebsch constrained variational principle $\sym{\delta} S(u,a,\lambda)=0$ goes back to \cite{clebsch1859ueber}, as reviewed for fluid dynamics, e.g., in \cite{serrin1959mathematical}. Henceforth, we will drop the summation over $q\in \{1,\ldots, d\}$.

The first term in the Clebsch action integrand in \eqref{Clebsch-action} corresponds to the kinetic energy of the unperturbed velocity $u$ in the decomposition \eqref{eq:model-decomp}, not the total velocity, $v$. The second term indirectly imposes the constraint $\dot{\eta}=v\circ \eta$ through the advection relation.  Indeed, the  method of characteristics shows for a given $a_0:\bbT^d\rightarrow \bbR$, the path $a_t=a_0(\eta_t^{-1})=\eta_{t*}a_0$ (the push-forward of $a_0$ by $\eta_t$) satisfies  the advection equation
$$
\partial_t a_t+(v_t\cdot \nabla)a_t=\partial_t a_t+(u_t\cdot \nabla)a_t +(\xi\cdot \nabla)a_t \dot{Z}_t=0.
$$

To continue, we consider  variations of the form $$u^{\epsilon}=u+\epsilon\sym{\delta}u, \quad a^{\epsilon}=a+\epsilon\sym{\delta}a, \quad \lambda^{\epsilon} = \lambda + \epsilon \sym{\delta}\lambda, \quad \epsilon \in (-1,1),$$ for arbitrarily given $\sym{\delta}u:[0,T]\rightarrow \dot{\mfk{X}}_{dV}$ and $\sym{\delta}a,\sym{\delta}\lambda :[0,T]\times \bbT^d\rightarrow \bbR^d$ such that $\sym{\delta}u, \sym{\delta}a, \sym{\delta}\lambda |_{t=0,T}\equiv 0$. Upon taking these variations of the action functional, one finds
\begin{equation}\label{eq:varS_intro}
0 =
\sym{\delta}S(u,a,\lambda)=\int_0^T\int_{\bbT^d}\big [\left(u+\lambda \nabla a\right)\cdot \sym{\delta}u+\lambda \left(\partial_t\sym{\delta}a+(v\cdot \nabla)\sym{\delta} a\right)+ \sym{\delta}\lambda\left(\partial_t a+(v\cdot \nabla) a\right)\big]dV\rmd t.
\end{equation}
Here, `$\cdot$' denotes the inner product on $\bbR^d$ relative to the standard coordinate system (i.e., flat metric $\delta_{ij}$). We note also that since $u$ and $\sym{\delta}u$ are constrained to be mean and divergence-free, we have
$$
\int_{\bbT^d}\left(u+\lambda \nabla a\right)\cdot \sym{\delta}udV=\int_{\bbT^d}\left(u+\dot{P}\lambda \nabla a\right)\cdot \sym{\delta}udV.
$$

Using integration by parts in space and time, we get that $(u,a,\lambda)$ is a critical point of $S$ if and only if 
$$
u= -\dot{P} (\lambda \cdot \nabla a),\qquad  \partial_t\lambda+(v\cdot \nabla)\lambda=0, \qquad  \partial_t a+(v\cdot \nabla)a=0.
$$ 
It follows that 
\begin{equation}\label{eq:intro_calc_diamond}
\begin{aligned}
\partial_t \dot{P}u &= -\dot{P}\partial_t \lambda \nabla  a - \dot{P}\lambda  \nabla  \partial_t a=\dot{P}((v\cdot \nabla)\lambda)\nabla a +\dot{P}\lambda \nabla((v\cdot \nabla)a)\\
&=\left(\dot{P}((v\cdot \nabla)\lambda)\nabla a +\dot{P}\lambda (v\cdot \nabla)\nabla a\right)+\dot{P}\lambda \partial_{x^j}a\nabla v^j  \\
&=-\dot{P}(v\cdot \nabla)u-\dot{P}(\nabla v)^T\cdot u.
\end{aligned}
\end{equation}
Here $((\nabla v)^T \cdot u)^i :=\delta^{ij}u^k\partial_{x^j}v^k, $ and we  have used the $\delta^{ij}$ in order to maintain the geometric index convention even though we are working on flat space.  Therefore,
\begin{equation}\label{eq:main_eqn_intro}
\partial_t u_t +\dot{P}(v_t\cdot \nabla)u_t +\dot{P}(\nabla v_t)^T\cdot u_t= 0 \quad \Leftrightarrow \quad 
\partial_t u_t +(v_t\cdot \nabla)u_t +(\nabla v_t)^T\cdot u_t= -\nabla p_t + c_t.
\end{equation}
In terms of the projections $Q$ and $H$, we find
$$
-\nabla p= Q(v_t\cdot \nabla)u_t +Q(\nabla v_t)^Tu_t=Q(u_t\cdot \nabla)u_t +\left(Q(\xi\cdot \nabla)u_t +Q(\nabla \xi)^Tu_t\right) \dot{Z}_t 
$$
$$
c_t = H(v_t\cdot \nabla)u_t +H(\nabla v_t)^T\cdot u_t=H(\nabla v_t)^Tu_t=H(\nabla \xi)^T\cdot u_t\dot{Z}_t=\int_{\bbT^d}(\nabla \xi)^T\cdot u_tdV\dot{Z}_t.
$$
We note that the pressure $p$  enables us to enforce the constraint that $u$ is incompressible and the constant (in space) $c$ enables us  to enforce that $u$ is mean-free.
Substituting in $v=u+ \xi \dot{Z}$, we find
$$
\partial_t u_t +(u_t\cdot \nabla )u_t + \left((\xi \cdot \nabla)u_t  + (\nabla \xi)^T\cdot u_t\right)\dot{Z}_t =-\nabla \tilde{p}_t+c_t, \quad \tilde{p}_t = p_t +\frac{1}{2}|u_t|^2.
$$
In dimension two and three, one can readily check an equivalent formulation in terms of the  vorticity  $\tilde{\omega} = \operatorname{Curl}u$:
\begin{equation}\label{eq:vorticity_standard}
\partial_t \tilde{\omega}_t +(v_t\cdot \nabla)\tilde{\omega}_t -\mathbf{1}_{d=3}(\tilde{\omega}_t\cdot \nabla)v_t= 0, \qquad u = \operatorname{BS}(\omega).
\end{equation}

From this point on, we assume the reader is familiar with basic differential geometry  (see Section \ref{sec:differential_geometry}. Let us  introduce an arbitrary coordinate system and denote  by $\{dx^i\}_{i=1}^d$ a global frame of $\Omega^1$. Moreover, let the musical notation $\flat: \mfk{X}\rightarrow \Omega^1$ denote the isomorphism between vector fields and one-forms. Equation \ref{eq:main_eqn_intro} can be expressed covariantly as
\begin{equation}\label{eq:velocity_one-form}
\partial_t u^{\flat}_t +\pounds_{v_t}u^{\flat}_t= -\bd \tilde{p}+c^{\flat}_t,
\end{equation}
where the Lie-derivative operator $\pounds_{v_t}$ acts on the one-form $u^{\flat}$ to produce the one-form $\pounds_{v_t}u^{\flat}$, given by
$$
\pounds_{v_t}u^{\flat}=\pounds_{v_t}(g_{ki}u^kdx^i)=\left(v_t^j\partial_{x^j}(g_{ki}u^k)+g_{kj}u^k\partial_{x^i}v_t^j\right)dx^i=\left(v_t^ju^k\partial_{x^j}g_{ki} +g_{ki}v_t^j \partial_{x^j}u^k+g_{kj}u^k\partial_{x^i}v_t^j\right)dx^i.
$$
Here $\bd \tilde{p}$ is exterior derivative of the scalar-field $\tilde{p}$.

Let $\omega = \bd u^{\flat}\in \Omega^2$ denote the vorticity two-form obtained by applying the exterior derivative operator $\bd$. Since the exterior derivative commutes with the Lie derivative, one finds
\begin{equation}\label{eq:vorticity_two-form}
\partial_t \omega_t +\pounds_{v_t}\omega_t=0.
\end{equation}
The two characterizations of the vorticity $\omega$ and $\tilde{\omega}$ satisfying, \eqref{eq:vorticity_standard} and \eqref{eq:vorticity_two-form}, respectively, are related by the Hodge-star operator $\star: \Omega^2 \rightarrow \Omega^{d-2}$. In dimension two, $\tilde{\omega}=\star\omega \in \Omega^0$, and in dimension three, $\tilde{\omega}=\sharp \star \omega\in \dot{\mfk{X}}_{dV}$. In order to obtain equation \eqref{eq:vorticity_standard} directly from \eqref{eq:vorticity_two-form}, one uses that $\sharp \star $ and the Lie derivative commute (see, e.g., \cite{besse2017geometric}[Section A.6]). 

\medskip

\textbf{Kelvin circulation theorem.}
The covariant formulation immediately implies a Kelvin circulation theorem. Let $\gamma$ denote a closed loop in $\bbT^d$. Then using Reynolds transport theorem,
$$
\frac{d}{dt}\oint_{\eta_t(\gamma)} u^{\flat}_t=\oint_{\eta_t(\gamma)}(\partial_t u^{\flat}_t + \pounds_{v_t}u^{\flat}_t)=\oint_{\eta_t(\gamma) }\bd \tilde{p} = 0,
$$
where one transforms the integration around the moving loop $\eta_t(\gamma) $ to the loop $\gamma$ in the material frame by applying the pull back $\eta_t^*$ to the integrand, then takes the time derivative, applies the dynamic definition of the Lie-derivative, transforms back and substitutes the covariant equation of fluid motion \eqref{eq:velocity_one-form}. 
\medskip

\textbf{Helicity conservation.} In three dimensions, the helicity,  defined as $$\Lambda(\tilde{\omega}) =\int_{\bbT^3} u^{\flat}\wedge \omega= \int_{\bbT^3} u^{\flat}\wedge {\bd}u^{\flat}$$
measures the linkage of field lines of the divergence-free vector field $\tilde{\omega}$ \cite{arnold1999topological}. 
Owing to \eqref{eq:velocity_one-form} and \eqref{eq:vorticity_two-form}, we have 
$$
\partial_t (u^{\flat}\wedge \omega) = -\pounds_{v_t}(u^{\flat})\wedge \omega -u^{\flat}\wedge \pounds_{v_t}\omega-\bd \tilde{p}\wedge \omega,
$$
and hence
$$
\frac{d\Lambda}{dt}(\tilde{\omega})= \frac{d}{dt}\int_{\bbT^3} u^{\flat}\wedge \omega = \int_{\bbT^3} \bd \tilde{p}\wedge {\bd}u^{\flat}
=  \int_{\bbT^3} \bd (\tilde{p}\, {\bd}u^{\flat}) = 0\,.
$$
Thus, the linkage number of the vorticity vector field $\Lambda(\tilde{\omega})$ is preserved by the 3D Euler fluid equations \eqref{eq:velocity_one-form}.

\medskip

\textbf{Enstrophy conservation in two-dimensions.}
In two dimensions,  for any $f\in C^\infty$, we find
$$
\partial_t f(\tilde{\omega}_t) + (v_t\cdot \nabla f)(\tilde{\omega}_t)=0,
$$
and hence
$$
\int_{\bbT^2} f(\tilde{\omega}_t) dV=\int_{\bbT^2}f(\tilde{\omega}_0)dV.
$$
In particular, taking $f(x)=x^2$, we find
$$
\int_{\bbT^2} |\tilde{\omega}_t|^2 dV=\int_{\bbT^2}|\tilde{\omega}_0|^2dV,
$$
which implies that in two-dimensions enstrophy is conserved.

\bigskip 

\textbf{Momentum representation.} 
The Lie derivative of the volume form $dV$ along $v$ is zero since $\pounds_{v} dV=(\operatorname{div} v)\, dV=0$. Thus, we can also write equation \eqref{eq:main_eqn_intro} as 
\begin{equation}\label{eq:one-form-density-smooth}
\partial_t m_t +\pounds_{v_t}m_t= \bd \tilde{p}\otimes dV+c_t\otimes dV,
\end{equation}
where $m_t =u^{\flat} \otimes dV\in \mfk{X}^{\vee}:=\Omega^1\otimes \operatorname{Dens}$ denotes the space of smooth  one-form densities. In Sections \ref{sec:Clebsch_var_princ}, \ref{sec:HP_var_princ}, and \ref{sec:EP_var_princ}, the momentum will be characterized as a one-form density in order to conveniently incorporate both the inhomogeneous and compressible setting and work canonically. One can always  transform  between equivalent formulations once a metric and volume form have been fixed. We will  now explain how one can directly derive the various equivalent formulations directly from the Clebsch action functional.

\medskip

\textbf{Clebsch constrained variational principle revisited.} 
Let us now explain how we can directly derive \eqref{eq:velocity_one-form} and \eqref{eq:one-form-density-smooth} from the Clebsch action functional. The first term on the RHS of \eqref{eq:varS_intro} can be understood in a coordinate-free manner either as:
\begin{enumerate}[(i)]
\item $$(u+\lambda \nabla a, \sym{\delta}u)_{\mfk{X}_{L^2}}=\int_{\bbT^d}g(u+\lambda \nabla a, \sym{\delta}u) dV, \quad \textnormal{where} \;\; (\cdot,\cdot)_{\mfk{X}_{L^2}}: \mfk{X}\times \mfk{X}\rightarrow \bbR;$$
\item $$\langle u^{\flat}+\lambda \bd a, \sym{\delta}u\rangle= \int_{\bbT^d}\bi_{\sym{\delta}u}(u^{\flat}+\lambda \bd a) dV, \quad \textnormal{where} \;\; \langle \cdot,\cdot\rangle: \Omega^1\times \mfk{X}\rightarrow \bbR,$$
\item $$\langle u^{\flat}\otimes dV+ \bd a \otimes \lambda dV, \sym{\delta}u\rangle_{\mfk{X}}=\int_{\bbT^d}\bi_{v}\left[(u^{\flat}\otimes dV+\bd a\otimes \lambda dV)\right], \;\; \textnormal{where} \quad \langle \cdot,\cdot\rangle_{\mfk{X}}: \mfk{X}^{\vee}\times \mfk{X}\rightarrow \bbR,$$
\end{enumerate}
Let us denote
$$
(i)\;\;m=u\in \mfk{X}, \quad (ii)\;\; m = u^{\flat} \in \Omega^1,\quad (iii)\;\;\; m = u^{\flat}\otimes dV\in \mfk{X}^{\vee}.
$$ 
Let $$
(\lambda, a)_{L^2} = \int_{\bbT^d}\lambda a dV, \quad \textnormal{where} \quad (\cdot, \cdot):\Omega^0\times \Omega^0\rightarrow \bbR.
$$
It follows that 
$$
(i)\;\; (\lambda, \pounds_{v} a)_{L^2}= -(\lambda \diamond a, v)_{\mfk{X}_{L^2}}, \quad (ii)\;\;
(\lambda, \pounds_{v} a)_{L^2}= -\langle \lambda \diamond a, v\rangle,\quad \textnormal{or} \quad 
(iii)\;\;( \lambda , \pounds_{v} a)_{L^2} = -\langle\lambda \diamond a, v\rangle_{\mfk{X}},
$$
where 
$$
(i)\;\;\lambda \diamond a = -\lambda \nabla a,  \quad (ii)\;\;\lambda \diamond a = -\lambda \bd a, \quad \textnormal{or} \quad (iii)\;\;\lambda  \diamond a = -\bd a\otimes \lambda dV,
$$
respectively.

A critical point of the Clebsch action $S$ in \eqref{Clebsch-action} then  satisfies $$\dot{P}m = \dot{P}(\lambda \diamond a),$$ where we use the same notation $\dot{P}$ for the corresponding projection onto `divergence and harmonic-free' parts (see Section \ref{sec:Hodge}) in all three cases. In all three cases, following a similar calculation to the one given in \eqref{eq:intro_calc_diamond}, we obtain
$$
\partial_t m_t + \dot{P}\pounds_{v_t} m_t =0.
$$
The first case (i)  agrees with  the direct calculus computation given above. In general, the main ingredients of this computation (see Section \ref{proof:Clebsch}) are 1) the definition of $\diamond$, 2) the relation for all $v,w\in \mfk{X}$ and $a\in \Omega^0$ (i.e., for all tensor fields, $a$) that
$$
\pounds_{v}\pounds_{w} a- \pounds_{w}\pounds_{v} a= \pounds_{[v,w]}a ,
$$
and 3) that 
$$
\langle m,\operatorname{ad}_{v}w \rangle = \langle \pounds_{v}m,w \rangle,
$$
for all of the above pairings. That is, $\operatorname{ad}^*_vm=\pounds_{v}m$. 
If $v$ is not divergence-free, then $\operatorname{ad}^*_vm=\pounds_{v}m$  is only true for the pairing $\langle \cdot, \cdot \rangle_{\mfk{X}}$. 

Thus, one may characterize the `momentum' $m$ in various ways if a metric and volume form are fixed. However, the pairing $\langle \cdot, \cdot\rangle_{\mfk{X}}$ is canonical in that it  does not require a metric or volume form to be fixed (see the discussion in Section \ref{sec:duals_of_bundles}), and we use this pairing above. 

As a consequence of this discussion, we see that the line-element stretching term $(\nabla v_t)^T\cdot u_t$ in equation \eqref{eq:main_eqn_intro}  does not arise because we have characterized momentum in a certain way. This term appears even if we treat $m$ as a vector and work in a fixed standard coordinate system. As derived here, the stretching term tells us that the Clebsch variational principle has produced covariant coordinate-free equations. This is simply the generalized-coordinate theorem for the covariance of variational principles, the first being the Euler-Lagrange equations in classical mechanics, which are valued for precisely this reason.

\newpage

\noindent\textbf{Hamilton-Pontryagin variational principle.} 

Another way to impose the constraint on the deterministic flow decomposition is through the \emph{Hamilton-Pontryagin variational principle}.  The Hamilton-Pontryagin  action  integral on $[0,T]$ is given by
$$
S(u,\eta,\lambda)=\int_0^T\int_{\bbT^d}\left[\frac{1}{2}|u_t|^2+ \lambda_t\cdot\left(\dot{\eta}_t\eta_t^{-1}-v_t\right) \right]dV \rmd t. 
$$
Here, $\eta:[0,T]\rightarrow \operatorname{Diff}_{dV}$ is an arbitrary. The second-term corresponds to the Lagrangian dynamical constraint $\dot{\eta}=v\circ \eta$.  A variation of $\eta$ is simply a two-parameter curve $\eta: [-1,1]\times[0,T]\rightarrow \operatorname{Diff}_{dV}$ with equality of mixed-derivatives. 

One  refers to the stationary principle $\sym{\delta} S = 0$ for the action  integral above as the \textit{Hamilton-Pontryagin} variatonal principle since the Lagrangian constraint variable $\lambda$ is the symmetry-reduced version of the adjoint variable in the Pontryagin maximum principle, as first discussed for fluids in \cite{bloch2000optimal}. To explain this analogue further, the cost may be regarded as the $L^2$-norm of the (control) $u$, the path is constrained to satisfy $\dot{\eta}_t= v_t\circ \eta_t$, the endpoints of $\eta$ are treated as fixed (i.e., $\eta_0={\rm id}$ and $\eta_T=\psi$), and one seeks to find a path that minimizes the cost. However, in general, critical points are not global minimizers  \cite{brenier1989least, brenier1999minimal}. 

\section{A few words of motivation  for the theory of rough paths}\label{sec:motivation}

Let $\{\xi_k\}_{k=1}^K\subset \mathfrak{X}_{C^{\infty}}$ be  a family of smooth vector fields on a closed manifold $M$.  Let $\alpha\in (0,1]$ and  $Z\in C^{\alpha}_T(\bbR^K)$.  Consider the ordinary differential equation
\begin{equation}\label{eq:diff_eqn}
\textnormal{d}Y_t =\sum_{k=1}^K \xi_k(Y_t)\textnormal{d}Z^k_t, \;\; Y_t|_{t=0}=Y_0.
\end{equation}
If we can solve \eqref{eq:diff_eqn}, then we  expect  for any $f\in C^{\infty}(M)$ that  $f(Y)\in C^{\alpha}_T(\bbR)$, and hence  $$\xi_k [f](Y)=\xi^i_k(Y)\partial_{x^i}f(Y)\in C^{\alpha}_T(\bbR^K).$$ If we require $2\alpha>1$, then the integral $\int_0^t \xi_k[ f](Y_s)\textnormal{d}Z^k_s$ in 
\begin{equation}\label{eq:diff_eq_int_form}
f(Y_t)=f(Y_0)+\sum_{k=1}^K\int_0^t \xi_k[f](Y_s)\rmd Z_t^k
\end{equation}
may be  defined as a Young integral \cite{young1936inequality} (see Lemma \ref{lem:sewing}), and we  expect to have stability properties of the solution in terms of the  path $Z$; that is, the  mapping $Z\in C^{\alpha}_T(\bbR^K)
\mapsto f(Y)\in C^{\alpha}_T(\bbR)$ is continuous for all $f\in C^{\infty}(M)$. However, if $2\alpha\le 1$, then Young integration is inadequate to develop a pathwise solution theory with a stability property.

A prime example of such a path is a realization of a $K$-dimensional real Brownian motion  $Z_t^k=B^k_t(\omega)$, $\omega\in \Omega$, for which it is known that on a set of probability one,  $B(\omega)\in C^{\alpha}_T(\bbR^K)$ for $\alpha<\frac{1}{2}$. Indeed, T.\ Lyons showed \cite{lyons1991non} (see, also, Prop.\ 1.1. in \cite{friz2014course}) that there exists no separable Banach space $\clB\subset C_T(\bbR^K)$ in which the sample paths of Brownian motion lie and for which the integral $\int_0^{\cdot}f_tdg_t: C^{\infty}_T(\bbR)\times C^{\infty}_T(\bbR)\rightarrow C^{\infty}_T$ extends in a continuous way to $\clB\times \clB\rightarrow C_T(\bbR^K)$. Since the  integral $\int_0^t B_s^1(\omega) \textnormal{d}B_s^2(\omega)$ is expected to be the solution of the simplest  differential equation driven by a two-dimensional Brownian motion $B(\omega)=(B^1(\omega),B^2(\omega))$, the result of T.\ Lyons indicates that the development of a pathwise theory must take into account the additional structure of the solution $Y$ and the path $Z$. 

If, however,  $K=1$ or the vector fields commute (i.e. $[\xi_{k_1},\xi_{k_2}]\equiv 0$ for all $k_1,k_2$), then  a solution theory can be developed for  continuous paths $Z\in C_T(\bbR^K)$. Indeed, H.\ Doss and H.\ Sussman \cite{doss1977liens, sussmann1978gap} showed that the solution can be defined by 
$$
Y_t=\exp\left(\sum_{k=1}^K\xi_k Z^k_t\right)Y_0,
$$
where  $\exp\left(\sum_{k=1}^K\xi_k Z^k_t\right)$ is the flow of the vector field $\sum_{k=1}^K\xi_k Z^k_t$ with $t$-fixed at  time $t=1$ (i.e., the time-one map). This is clearly a continuous function of $Z$ and satisfies the equation exactly if $Z$ is differentiable.
In \cite{sussmann1978gap}[pg.\ 21], H.\ Sussman discussed the connection of  pathwise solutions with so-called Wong-Zakai results/anomalies (see, e.g., \cite{sussmann1991limits}) and clearly indicated that: (i) extending this result to $K>1$ in the  non-commutative case  would require substantially new methods; and (ii) finding such an extension would lead to significant progress in our understanding of the  anomalies. 

The key idea for extending the pathwise theory came from T. Lyons \cite{sip1993,lyons1995interpretation,lyons1994differential} as a \emph{tour de force} which combined iterated integrals  \cite{peano1888integration, bocher1914introduction, magnus1954exponential, chen1957integration}, control theory \cite{Chen1963, fliess1981fonctionnelles, sussmann1983lie, fliess1986volterra, sussmann1987general}, system identification and filtering \cite{mitter1979multiple, boyd1984analytical, boyd1985fading}, numerical schemes \cite{butcher1972algebraic, clark1980maximum, sussmann1988product,gaines1994algebra, hairer2006geometric}, and renormalization \cite{guttinger1955products, fliess1981fonctionnelles, connes1999hopf, blanes2009magnus}.  

To describe this idea, let us assume for the moment that third-order brackets vanish (i,e.,  $[\xi_{k_1},[\xi_{k_2},\xi_{k_3}]]=0$ for all $k_1,k_2,k_3$) and that  $Z_t^k=B_t^k(\omega)$ is a realization of a $K$-dimensional Brownian motion. Consider  for all $(s,t)\in \Delta_T^2$, the time-one map
\begin{align*}
\mu_{st}(\omega)&= \exp\left(\sum_{k=1}^K\xi_k\delta B^k_{st}(\omega)+\frac{1}{2}\sum_{k,l=1}^K[\xi_l,\xi_k]\mathbb{B}_{st}^{lk}(\omega)\right)=\exp\left(\sum_{k=1}^K\xi_k\delta B^k_{st}(\omega)+\sum_{k<l}[\xi_l,\xi_k]\mathbb{A}^{lk}_{st}(\omega)\right),
\end{align*}
where the quantity $$\mathbb{B}^{lk}_{st}(\omega):=\left(\int_{s}^t\int_{s}^{t_1}\textnormal{d}B^l_{t_2}\circ \textnormal{d}B^k_{t_1}\right)(\omega)$$ is the $2\alpha$-H\"older modification of the Stratonovich integral evaluated at $\omega$ and  $\mathbb{A}^{lk}_{st}(\omega)=\frac{1}{2}\left(\mathbb{B}^{lk}_{st}(\omega)-\mathbb{B}^{kl}_{st}(\omega)\right)$. Then $Y_t(\omega):=\mu_{0t}(\omega)Y_0$  is the pathwise  solution of the SDE.
Thus, the notion of path is enhanced to include the addition of the iterated-integral $$\mathbf{B}(\omega)=(B(\omega),\bbB(\omega))\in C^{\alpha}_T(\bbR^K)\times C^{2\alpha}_{2,T}(\bbR^{K\times K}), \quad \alpha<\frac{1}{2},$$ where  $\omega$ belongs to a set  $\Omega'\in \clF$ of probability one. Of course, we are able to construct a pathwise solution because probability theory (i.e., $L^2(\Omega)$-closure) enabled us to construct the iterated integral of the path $Z_t^k=B_t^k(\omega)$ and the Kolmogorov continuity theorem allowed us to obtain a $2\alpha$-H\"older version of the iterated integral. Furthermore, the map is stable in the sense that for any $\{B^n(\omega)\}_{n\in \bbN}$ such that $\mathbf{B}^n(\omega)=(B^n(\omega),\bbB^n(\omega))\rightarrow \mathbf{B}(\omega)$, one has $\mu^n_{st}(\omega)\rightarrow \mu_{st}(\omega)$ as $n\rightarrow \infty$. It is in this sense that the $Y_t(\omega)$ is a pathwise solution. The reader familiar with Magnus expansions will notice that $\mu$ is essentially the second-order Magnus expansion and the expansion is exact because of the third-order bracket condition. 

Use of the relation $\bbB^{lk}_{s t}(\omega)+\bbB^{kl}_{s t}(\omega)= \delta B^l_{st}(\omega)\delta B^k_{st}(\omega)$ shows that for all $(s,t)\in \Delta_T$ and $f\in C^{\infty}(M)$, \begin{equation}\label{eq:chenn_fleiss}
f(Y_t(\omega))=f(Y_s(\omega))+\sum_{k=1}^K \xi_k[f](Y_s(\omega))\delta B^k_{st}(\omega) +\sum_{k,l=1}^K \xi_l[\xi_k[f]](Y_s(\omega))\mathbb{B}_{st}^{lk}(\omega)+f^{\sharp}_{st}(\omega),
\end{equation}
where  $f^{\sharp}: \Delta_T^2\rightarrow \bbR $ satisfies for a constant $C>0$
$$|f^{\sharp}_{st}(\omega)|\le C  |\xi|_{C^{3}}([B(\omega)]_{\alpha}+[\bbB(\omega)]_{2\alpha})^2|t-s|^{3\alpha}.$$
Upon defining for all $(s,t)\in \Delta_T$ and $\omega\in \Omega'$,
$$
\Xi_{st}=\xi_k[f](Y_s(\omega))\delta B^k_{st}(\omega)+\xi_l[\xi_k[f]](Y_s)\bbB_{st}^{lk}(\omega)+f^{\sharp}_{st}(\omega),
$$
and invoking $|f^{\sharp}_{st}(Y_s)(\omega)|\le C(\omega) |t-s|^{3\alpha}$ and $ \delta_2 \bbB^{lk}_{s\theta t}(\omega)=\delta B^l_{s\theta}(\omega)\delta B^k_{\theta t}(\omega)$, one can directly check that $\Xi\in C_{2,T}^{\alpha,3\alpha} (\bbR)$. Hence, one may apply Lemma \ref{lem:sewing} to construct the integral $\clI\Xi=(\int \xi[f] \rmd \mathbf{B})(\omega)$. This integral agrees with the Stratonovich integral $\left(\int_s^t \xi[f](Y_s)\circ \textnormal{d}B_s\right)$ on a set of  probability one (see Theorem \ref{thm:rough_int}).

The expansion \ref{eq:chenn_fleiss} is called the second-order Chen-Fleiss expansion in the system-identification and control literature. Here, $B$ can be interpreted as a control. Such expansions illustrate that all information of the controls impact on the system is contained in the iterated integrals of the control. The Chen-Fleiss expansion can  be obtained directly from \eqref{eq:diff_eq_int_form} by formally iterating the integral (Taylor series) with $Z=B$ and
and then evaluating at $\omega$. One  immediately recognizes the advantage of the Magnus expansion over the  Chen-Fleiss series. Namely, the Magnus expansion is an exact solution of an approximating system, while the Chen-Fleiss series is not \cite{kawski2004bases}.  Nevertheless, such expansions are of great utility in the  study of controllability  and analysis of control systems \cite{sussmann1987general}. 

From the above discussion, we have learned that for all $\omega\in \Omega'$,  $(B(\omega),\bbB(\omega))$ belongs to  the class of $(Z,\bbZ)\in C^{\alpha}_T(\bbR^K)\times C^{2\alpha}_{2,T}(\bbR^{K\times K})$ such that  for all $(s,t)\in \Delta_T$ 
\begin{equation}\label{eq:chen1}
\delta_2\bbZ^{lk}_{s\theta t}=\delta Z^l_{s\theta}\delta Z^k_{\theta t}, \quad \bbZ^{lk}_{s t}+\bbZ^{kl}_{s t}= \delta Z^l_{st} \delta Z^k_{st}.
\end{equation}
For $\alpha \in \left(\frac{1}{3},\frac{1}{2}\right]$, the closure of the set of Lipschitz paths in $C^{\alpha}_T(\bbR^K)\times C^{2\alpha}_{2,T}(\bbR^{K\times K})$ that satisfy the above properties is called the space of geometric rough paths.

The fundamental idea of T.\ Lyons is that  in the general case of $\xi$ not having vanishing Lie brackets, there is a notion of solution of equations driven by geometric rough paths $\bZ=(Z,\bbZ)$ and an accompanying well-posedness theory. There  are many equivalent notions of solution (see Lemma \ref{lem:soln_RDE}). For example, the Chen-Fleiss expansion up to level two can be used to define an intrinsic notion of solution by additionally specifying that the remainder $f^\sharp$ belongs $C^{3\alpha}_{2,T}(\bbR)$ for any $f\in C^\infty(M)$ \cite{davie2008differential}. Higher-order iterated integrals are needed  if $\alpha<\frac{1}{3}$. However, one still needs a means of constructing $\bbZ$, and probability  is the main tool used to do so. Effectively, then, the  technical ingredient necessary to develop the basic theory of rough paths  is  the sewing lemma (Lemma \ref{lem:sewing}) \cite{gubinelli2004controlling}. To wit, the sewing lemma is used to establish the existence of integrals against  $\bZ$ and to obtain bounds on `remainder'  $f^{\sharp}_{st}$. It is also possible to prove that there exists a unique two-parameter flow associated with the time-one map
$$
\mu_{st}(\omega)= \exp\left(\sum_{k=1}^K\xi_k\delta Z^k_{st}(\omega)+\frac{1}{2}\sum_{k,l=1}^K[\xi_l,\xi_k]\mathbb{Z}_{st}^{lk}(\omega)\right), \quad \forall (s,t)\in \Delta_T^2,
$$
even if the third-order Lie-brackets of $\xi$ do not vanish. The main ingredient in this approach is the  multiplicative sewing lemma, developed by I.\ Bailleul \cite{bailleul2014flow}.

A prophethetical quote of M. Fleiss \cite{fliess1981fonctionnelles}[pg. 33] translated into English reads, 
\begin{quote}
We know (cf. Schwartz \cite{schwartz1954limpossibilite}) that it is generally impossible to multiply the distributions and, in particular, that the powers $\delta^2,\delta^3,\cdots$, of the Dirac impulse are not distributions. Similarly here, we cannot represent the square of a Dirac impulse by a series of Chen. However, it is possible to propose what is called in physics a renormalization (et. G\"uttinger \cite{guttinger1955products}) based on natural combinatorial considerations.  
\end{quote}

T. Lyons showed that by postulating the existence of objects $\bbZ$ which satisfy \eqref{eq:chen1}, a  solution theory can be developed for differential equations driven by  rough paths. 
As explained above, probability is used to construct $\bbZ$. Thus, probability can be understood as a tool to renormalize through its construction of otherwise analytically ill-defined quantities $\bbZ$ -- and it is only this quantity that needs to be defined to construct a solution. M. Hairer extended the T. Lyons program by developing the theory of \emph{regularity structures} as the basis of a  solution theory for stochastic partial differential equations driven by white noise \cite{hairer2014theory} (see, also, \cite{friz2014course}). One of the key theorems in M.\ Hairer's theory  is the Reconstruction Theorem, which is a substantial generalization of the sewing lemma. As predicted by M. Fleiss \cite{fliess1981fonctionnelles} and H. Sussman \cite{sussmann1978gap}, this theory has had a transformative impact on renormalization in statistical physics, and of our understanding of stochastic differential equations (in finite and infinite dimensions) and the so-called Wong-Zakai anomalies. 

\section{Gaussian  rough paths}\label{sec:Gaussian_rough_paths}
A broad class of geometric rough paths are given by the Gaussian rough paths. Fix a complete probability space  $(\Omega, \clF, \bbP)$ supporting a $K$-dimensional Gaussian process  $\{Z_t\}_{t\le T}$ with independent components and zero mean.  Let  $R_{k}(s,t)=\bbE[Z^k_sZ^k_t]$ denote the corresponding  covariance functions and  $$R^{st}_{k,uv}=\bbE[\delta Z^k_{st} \delta Z^{k}_{uv}] =R_k(s,u)+R_k(t,v)-R_k(s,v)-R_k(t,u).$$ The existence of a rough path lift for $X$ is contingent upon sufficient rate of decay of the correlation of the increments. If for a given $q \in [1,\frac{3}{2})$, there exist a constant $C>0$ such that for  all $k\in \{1,\ldots, K\}$ and $(s,t)\in \Delta_T$,
\begin{equation}\label{ineq:Gauss_rough_cond}
\sup_{\clP([s,t]^2)}\sum_{[t_i,t_{i+1}]\times [s_i,s_{i+1}]\in \clP([s,t]^2)}|R^{t_i,t_{i+1}}_{k,s_i,s_{i+1}}|^{q}\le C |t-s|,
\end{equation}
where the supremum is taken over all finite partitions $\clP([s,t]^2)$ of the interval $[s,t]^2$, then there is a random variable $\bbZ$ and set $\bar{\Omega}\in \clF$ for which $\bbP(\bar{\Omega})=1$ and such that for all $\omega\in \bar{\Omega}$, $\bZ(\omega)=(Z(\omega),\bbZ(\omega)))\in \bclC^{\alpha}_{g,T}(\bbR^K)$ for $\alpha\in (\frac{1}{3},\frac{1}{2q})$. Furthermore, the lift is canonical in the sense that  for all $(s,t)\in \Delta_T^2,$
$$
\lim_{|\clP([s,t])|\rightarrow 0}\bbE\left|\sum_{[t_i,t_{i+1}]\in \clP([s,t])}\delta Z_{st_i}\otimes \delta Z_{t_it_{i+1}} - \bbZ_{st}\right|^2=0,
$$
where $\clP([s,t])$ denotes a finite partition of the interval $[s,t]$ and $|\clP([s,t])|$ denotes its mesh size and the integral is understood in the sense of a limit of nets.

If $X$ is stationary and
\begin{equation}\label{eq:sigma_sq}
\sigma^2_k(\tau):=R^{t(t+\tau)}_{k,t(t+\tau)}
\end{equation}
is concave and non-decreasing as a function of $\tau$ on an interval $[0,h]$ for some $h>0$ and there is a constant $C>0$ such that for all $k\in \{1,\ldots, K\}$ and $\tau\in [0,h]$, $$|\sigma^2_k(\tau)|\le C |\tau|^{\frac{1}{q}},$$ then \eqref{ineq:Gauss_rough_cond} holds. We refer the reader to \cite{friz2014course}[Ch.\ 10] or \cite{friz2010differential}[Ch.\ 15] for a more thorough exposition.  

\begin{example}[Fractional Brownian motion]
The prototypical Gaussian process  satisfying these assumptions is a $K$-dimensional  fractional Brownian motion $B^H$, $H\in (\frac{1}{3},1]$, which has the covariance function  $$R^H(s,t)= \frac{1}{2}\left[s^{2H}+t^{2H}-|t-s|^{2H}\right]\times I_K \quad \Rightarrow \quad  \sigma^2_k(\tau)=\tau^{2H},$$ 
where $I_K$ is the $K\times K$-identity matrix and $\sigma^2_k(\tau)$ is defined in \ref{eq:sigma_sq}.
Thus, $B^H$ lifts to a geometric rough path $\bB^H(\omega)=(B^H(\omega),\bbB^H(\omega))\in \bclC^{\alpha}_{g,T}(\bbR^K)$, $\alpha\in (\frac{1}{3},\frac{1}{4H})$ for all $\omega$ in a set of probability one. In particular, for $H=\frac{1}{2}$, $B:=B^{\frac{1}{2}}$ is a standard Brownian motion, $\bB(\omega)=(B(\omega),\bbB(\omega))\in \bclC^{\alpha}_{g,T}(\bbR^K)$, $\alpha\in \left(\frac{1}{3},\frac{1}{2}\right)$, and
$$
\bbB_{st}(\omega)=\left(\int_s^t\delta B_{st_2}\otimes \circ  \textnormal{d}B_{t_1}\right)(\omega), \;\; (s,t)\in \Delta_T^2.
$$
We  note that 
$$
\bbB_{st}^{lk}(\omega)\ne \int_s^t\delta B^l_{st_2}(\omega) \circ \textnormal{d}B^k_{t_1}(\omega)
$$
because stochastic integrals are defined for non-simple processes via an $L^2(\Omega)$-closure and there is no pathwise way (i.e., in the sense that it is robust under smooth approximations of the path) to make sense of  the right-hand-side other than by simply defining via the left-hand-side.
\end{example}

\begin{example}[Volterra Gaussian processes]
A Volterra kernel $K: [0,T]^2\rightarrow \bbR$  is a square integrable function such that $K(s,t)=0$ for $s\ge t$. One can find conditions on the kernels $K:[0,T]^2\rightarrow \mathbb{R}$ such that the corresponding Volterra Gaussian processes 
$$
Z_t=\int_0^TK(t,s)\textnormal{d}B_s, \quad \quad R(s,t)=\int_0^{t\wedge s}K(t,r)K(s,r)\textnormal{d}r,
$$
can be lifted to a geometric rough path. We refer the reader to \cite{cass2019stratonovich} for a more in depth discussion of Volterra Gaussian processes and even how to extend the setup to more irregular paths. Fractional Brownian motion, Riemann-Liouville, and more simply, Ornstein-Uhlenbeck processes are all  examples of Volterra Gaussian rough paths.
\end{example}

\bibliographystyle{alpha}
\bibliography{bibliography}
\end{document}